\documentclass[referee,envcountsect,]{svjour3}
\smartqed
\usepackage{graphicx}
\usepackage{xcolor}

\usepackage{amsmath,amsfonts,amssymb}
\usepackage{tikz}
\usepackage{mdframed}
\newmdenv[linecolor=black,skipabove=\topsep,skipbelow=\topsep,
leftmargin=-5pt,rightmargin=-5pt,
innerleftmargin=5pt,innerrightmargin=5pt]{mybox}

\def\R{\mathbb R}
\def\oR{\overline{\mathbb R}}
\def\N{\mathbb N}
\def\co{\mbox{\rm co}\,}
\def\ox{\bar{x}}

\def\co{\mbox{\rm co}\,}

\def\ri{\mbox{\rm ri}\,}

\def\epi{\mbox{\rm epi}\,}

\def\dom{\mbox{\rm dom}\,}

\def\R{\mathbb{R}}
\def\N{\mathbb{N}}

\begin{document}
	
	\title{Optimization Problems with Nearly Convex Objective Functions and Nearly Convex Constraint Sets}
	\titlerunning{Optimization Problems with Nearly Convex Objective Functions...}
	
	
	
	\author{Nguyen Nang Thieu\and Nguyen Dong Yen}
	
	\institute{Nguyen Nang Thieu (Corresponding author) \at
		Institute of Mathematics, Vietnam Academy of Science and Technology, 18 Hoang Quoc Viet, Hanoi 10072, Vietnam\\ nnthieu@math.ac.vn
		\and
		Nguyen Dong Yen \at Institute of Mathematics, Vietnam Academy of Science and Technology, 18 Hoang Quoc Viet,
		Hanoi 10072, Vietnam\\
		ndyen@math.ac.vn}
	
	\date{Received: date / Accepted: date}
	
	\maketitle
	
	\begin{abstract} To every nearly convex optimization problem, that is a minimization problem with a nearly convex objective function and a nearly convex constraint set, we associate a uniquely defined convex optimization problem with a lower semicontinuous objective function and a closed constraint set. Interesting relationships between the original nearly convex problem and the associated convex problem are established. Optimality conditions in the form of Fermat's rules are obtained for both problems. We then get a Lagrange multiplier rule for a nearly convex optimization problem under a geometrical constraint and functional constraints from the Kuhn-Tucker conditions for the associated convex optimization problem. The obtained results are illustrated by concrete examples.
	\end{abstract}
	
	
	\keywords{Nearly convex set\and nearly convex function \and nearly convex optimization problem \and associated convex optimization problem \and normal cone \and Fermat's rule \and Lagrange multiplier rule}
	\subclass{90C26 \and 90C46 \and 49K30 \and 49K99}

 	\section{Introduction}\label{sect1}

 The concept of a \textit{nearly convex set} originated from the work of Minty~\cite{Minty1961}. Rockafellar~\cite{r,R1970} showed that the effective domain of the subdifferential mapping of a proper, lower semicontinuous convex function is only a nearly convex set, and not necessarily a convex set. It is known that, in any finite-dimensional Euclidean space, both the domain and the range of an arbitrary maximal monotone operator are always nearly convex sets (see~\cite[Proposition~6.4.1]{at_2003}). An extended-real-valued function is called a \textit{nearly convex function} if its epigraph is a nearly convex set.
 
 Nearly convex sets have been systematically studied by Bauschke et al.~\cite{bmw2013}, and by Moffat et al.~\cite{mmw2016}. The characterizations of nearly convex sets and nearly convex functions have been investigated by Bo{\c{t}}  et al.~\cite{bgw2007,bkw2008},~ Li and Mastroeni~\cite{LM2019}, and Nam et al.~\cite{nty1}.  
 
 Note that, in~\cite{nty1}, the concept of nearly convex set-valued mappings, the preservation of near convexity of set-valued mappings under various operations, and the theory of generalized differentiation for nearly convex set-valued mappings have been presented.
 
Nearly convex optimization problems, that is, optimization problems with nearly convex objective functions and nearly convex constraint sets, were studied in~\cite{bgw2007,bkw2008,LM2019}.  In~\cite{bgw2007}, the authors showed that the classical Fenchel duality statements remain valid when the functions involved are only nearly convex. In~\cite{bkw2008}, a strong duality result was established for nearly convex optimization problems. As particular cases, strong duality results were obtained for both the Lagrange dual problem and the Fenchel–Lagrange dual problem of optimization problems having nearly convex objective functions and nearly convex inequality cone constraints. Later on, Li and Mastroeni~\cite{LM2019} investigated near equality and near convexity of the solution sets, derived optimality conditions for nearly convex optimization problems, and established global error bounds in this setting. Many new results related to near convexity and its applications have been published in~\cite{nr1,nr2,nr3,nr5,nr4}.
 
 This paper presents new results on nearly convex optimization problems, with a focus on necessary and sufficient optimality conditions. These results are proved by employing several tools related to nearly convex sets and nearly convex functions from~\cite{mmw2016,nty1}, together with a formula for computing the normal cone to a sublevel set of a convex function from~\cite[Proposition~2, p.~206]{IT_1979}. The obtained results are analyzed  by a series of examples and counterexamples.
 
  Our main idea is to associate to each nearly convex optimization problem a uniquely defined convex optimization problem with a lower semicontinuous objective function and a closed constraint set. It turns out that remarkable relationships between the original nearly convex problem and the associated convex problem do exist. Optimality conditions in the form of Fermat's rules can be obtained for both problems. However, as far as we understand, the most effective way to get a Lagrange multiplier rule for a nearly convex optimization problem under a geometrical constraint and functional constraints is to derive it from the Kuhn-Tucker conditions for the associated convex optimization problem.
 
 Since optimization problems with a convex objective function and a nearly convex constraint set have been studied by Ho~\cite{ho1}, Jeyakumar and Mohebi~\cite{jm1}, and Ghafari and Mohebi~\cite{gm2021}, we will make a detailed comparison between two notions of nearly convex sets, namely, the notion of nearly convex sets in the sense of Ho~\cite{ho1} and the notion of nearly convex sets in the sense of Minty mentioned above. This allows us to clearly identify the differences between the optimality conditions presented in this paper and the corresponding results in~\cite{gm2021,ho1,jm1}.
 
The paper is organized as follows. In Sect.~\ref{sect2}, two notions of nearly convex sets are considered and compared. Nearly convex functions, nearly convex optimization problems, the associated convex problems and their properties,  and two kinds of solutions of a nearly convex optimization problem are investigated in Sect.~\ref{nc_problems}. In Sect.~\ref{optimality}, we establish necessary and sufficient optimality conditions for nearly convex optimization problems. Several concluding remarks are given in Sect.~\ref{sect5}.
 
 The topological closure and the interior of a set $D \subset \mathbb{R}^n$ are denoted by $\bar{D}$ and ${\rm int}\, D$, respectively. The affine hull, the relative interior, and the convex hull of $D$ are denoted by  ${\rm aff}, D$, ${\rm ri}\, D$, and ${\rm co}\, D$, respectively. Thus, ${\rm aff}\, D$ is the smallest affine set in $\mathbb{R}^n$ that contains $D$, ${\rm ri}\, D$ is the interior of $D$ with respect to the topology induced on ${\rm aff}\, D$, and ${\rm co}\, D$ is the smallest convex set containing $D$. It is known that ${\rm co}\, D$ is precisely the intersection of all convex sets that contain $D$. Moreover, if $D \subset \mathbb{R}^n$ is a nonempty convex set, then ${\rm ri}\, D \neq \emptyset$ (see~\cite[Theorem~6.2]{r}). The cone generated by a set $D \subset \mathbb{R}^n$, denoted by ${\rm cone}\, D$, is defined by $${\rm cone}\,  D=\big\{\lambda x\mid x\in D,\; \lambda\geq 0\big\}.$$ If $D$ is a convex set, then ${\rm cone}\,  D$ is the intersection of all convex cones containing~$D$ and the origin; hence it coincides with the cone $K_D$ introduced in~\cite[Subsection~3.1.1, p.~162]{IT_1979}  (see~\cite[Proposition~3, p.~163]{IT_1979}). Let $B(x,r)$ and $\bar B(x,r)$ denote the open and closed balls centered at~$x$ with radius~$r$, respectively. The set of positive integers is denoted by $\mathbb{N}$. The notations $\mathbb{R}_+$ and $\R_-$ are used to denote the set of nonnegative real numbers and the set of nonpositive real numbers, respectively.
 
 \section{On Two Notions of Nearly Convex Sets}\label{sect2}

First, we recall two notions of nearly convex sets, where the first notion was proposed by Minty~\cite{Minty1961} in another equivalent form.
 
 \begin{definition}\label{def_Minty} {\rm (See~\cite[Definition~2.6]{bmw2013}) A subset $\Omega$ of $\mathbb{R}^n$ is called {\em nearly convex} if there exists a convex set $C \subset \mathbb{R}^n$ such that	$C \subset \Omega \subset \bar C.$}
 \end{definition}
 
 In other words, a subset of $\R^n$ is nearly convex if it lies between some convex set and the closure of that convex set.
 
 \begin{definition}\label{def_Ho} {\rm (See~\cite[p.~42]{ho1}) A subset $\Omega \subset \mathbb{R}^n$ is called {\em nearly convex in the sense of Ho at a point $x \in \Omega$} if for every point $y \in \Omega$, there exists a sequence $\{t_k\}$ of positive numbers such that $t_k \to 0$ as $k \to \infty$ and $x + t_k (y - x) \in \Omega$ for all $k \in \mathbb{N}$. If~$\Omega$ is nearly convex in the sense of Ho at every point of $\Omega$, then it is called a \textit{nearly convex set in the sense of Ho}.}
 \end{definition}
 
 Since $x + t_k (y - x) = (1 - t_k)x + t_k y$ is a convex combination of $x$ and $y$ for each $t_k \in (0,1)$, the set $\Omega \subset \mathbb{R}^n$ is nearly convex in the sense of Ho at $x \in \Omega$ if and only if, on the open segment $(x,y)$ of the line segment joining $x$ to an arbitrary point $y \in \Omega$, there exists a sequence of vectors belonging to $\Omega$ that converges to $x$.
 
The notion of near convexity at a point in the sense of Ho was used by Jeyakumar and Mohebi~\cite[p.~117]{jm1} and later by Ghafari and Mohebi~\cite[Definition~2.7]{gm2021}.
 
 The following two examples show that nearly convex sets in the sense of Definition~\ref{def_Minty} may fail to be nearly convex in the sense of Definition~\ref{def_Ho}.

 \begin{example}\label{ex1}
 	{\rm  The set  $\Omega_1=\big\{(x_1,x_2)\in\mathbb R^2\mid x_1\geq 0\big\}\setminus \big(\{0\}\times (-1,1)\big)$ is nearly convex, but not nearly convex in the sense of Ho. Indeed, for $x=(0,-1)$ and $y=(0,1)$, we see that on the open segment $(x,y)$ of the line segment $[x,y]$ there exists no sequence of vectors belonging to $\Omega_1$ that converges to $x$.}
 \end{example}
 
 \begin{example}\label{ex2}
 	{\rm  The set  $\Omega_2=([0,1]\times [0,1])\setminus \Big\{x=(1,x_2)\mid \dfrac{1}{2}<x_2<\dfrac{3}{4}\Big\}$ is nearly convex, but not nearly convex in the sense of Ho. Indeed, for $x=(1,\dfrac{1}{2})$ and $y=(1,\dfrac{3}{4})$, we see that on the open segment $(x,y)$ of the line segment $[x,y]$ there exists no sequence of vectors belonging to $\Omega_1$ that converges to $x$.}
 \end{example}
 
 The following examples show that nearly convex sets in the sense of Definition~\ref{def_Ho} may fail to be nearly convex in the sense of Definition~\ref{def_Minty}.
 
 \begin{example}\label{ex3}
 	{\rm  The set of rational numbers $\mathbb{Q}\subset\mathbb{R}$ is nearly convex in the sense of Ho, but not nearly convex. It is also easy to see that the set $\Omega_3=\mathbb Q\times [0,1]\subset \mathbb R^2$ is nearly convex in the sense of Ho, but not nearly convex.}
 \end{example}
 
 \begin{example}\label{ex4} 
 	{\rm  (See~\cite[p.~42]{ho1}) The set  $$\Omega_4=\big\{(x_1,x_2)\in\mathbb R^2\mid x_1^2+x_2^2\leq 25\big\}\setminus\big\{(x_1,x_2)\in\mathbb R^2\mid (x_1-2)^2+x_2^2\leq 1\big\}$$ is nearly convex in the sense of Ho. However, $\Omega_4$ is not nearly convex.}
 \end{example}
 
 \begin{example}\label{ex5} 
 	{\rm  (See~\cite[p.~117]{jm1}) The set  $$\Omega_5=\big\{(x_1,x_2)\in\mathbb R^2\mid x_1^2+x_2^2\leq 16\big\}\cap\big\{(x_1,x_2)\in\mathbb R^2\mid (x_1-2)^2+x_2^2>1\big\}$$ is nearly convex in the sense of Ho. Nevertheless, $\Omega_5$ is not nearly convex.}
 \end{example}
 
 The above examples show that \textit{the notions of near convexity in the sense of Definition~\ref{def_Minty} and Definition~\ref{def_Ho} are completely different}. Therefore, the optimality conditions in~\cite{gm2021,ho1,jm1} are entirely different from the optimality conditions that will be established herein.

 \textit{Throughout the remainder of the present paper, near convexity of sets will always be understood in the sense of Definition~\ref{def_Minty}.}

 By~\cite[Lemma~2.7]{bmw2013} (see also~\cite[Proposition~2.1]{nty1}), if $\Omega\subset\R^n$ is nearly convex, then ${\rm ri}\,\Omega$ and $\overline{\Omega}$ are convex sets, and ${\rm ri}\,\Omega= {\rm ri}\,\overline{\Omega}$. Moreover, near convexity is preserved under linear mappings (and hence under affine mappings as well). In addition, the intersection of finitely many nearly convex sets is nearly convex, provided that a certain condition on the relationship among the relative interiors of these sets is satisfied.

 \begin{proposition}\label{T1} {\rm (See~\cite[Theorem~4.2 and Corollary~4.8]{mmw2016})} Let $\Omega$, $\Omega_1,\dots, \Omega_m$ be nearly convex sets in $\R^n$, and $A\colon \R^n\to \R^p$ be a linear operator. Then
 	\begin{enumerate}
 		\item[{\rm (a)}] $A(\Omega)$ is a nearly convex set in $\R^p$, $\ri A(\Omega)=A(\ri\Omega)$, and $\overline{A(\Omega)}=\overline{A(\overline{\Omega})}$.
 		\item[{\rm (b)}] If $\bigcap\limits_{i=1}^m\ri\Omega_i\neq\emptyset$, then $\bigcap\limits_{i=1}^m\Omega_i$ is nearly convex,
 		\begin{equation*}
 			\ri\left(\bigcap_{i=1}^m\Omega_i\right)=\bigcap_{i=1}^m\ri\Omega_i,
 		\end{equation*}
 		and
 			\begin{equation*}
 			\overline{\bigcap_{i=1}^m\Omega_i}=\bigcap_{i=1}^m\overline{\Omega_i}.
 		\end{equation*}
 	\end{enumerate}
 \end{proposition}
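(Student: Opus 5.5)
The plan rests on two standard facts about convex sets in $\R^n$ (Rockafellar, Theorems~6.3, 6.5, 6.6 in~\cite{r}), together with the facts already recalled: if $\Omega$ is nearly convex, then $\ri\Omega$ and $\overline\Omega$ are convex and $\ri\Omega=\ri\overline\Omega$. First I record that for any nearly convex $\Omega$,
\[
\ri\overline\Omega=\ri\Omega\subset\Omega\subset\overline\Omega=\overline{\ri\Omega}.
\]
The last equality follows from $\overline{\ri\overline\Omega}=\overline\Omega$ for the convex set $\overline\Omega$. So a nearly convex set is sandwiched between the convex set $\ri\Omega$ and its closure. This sandwich reduces every claim to the corresponding statement for convex sets.

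For (a), let $C:=\ri\Omega$, a convex set with $C\subset\Omega\subset\overline C$. Then $A(C)\subset A(\Omega)\subset A(\overline C)\subset\overline{A(C)}$, where the last inclusion uses continuity of $A$. Since $A(C)$ is convex, $A(\Omega)$ is nearly convex. Taking closures gives $\overline{A(\Omega)}=\overline{A(C)}$, and applying the same sandwich with $\overline\Omega$ in place of $\Omega$ gives $\overline{A(\overline\Omega)}=\overline{A(C)}$ as well, which proves the closure identity. For the relative interior, note that $A(\Omega)$ lies between $A(C)$ and its closure. Two convex sets with the same closure have the same relative interior, and any nearly convex set has relative interior equal to that of its closure. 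Hence $\ri A(\Omega)=\ri\overline{A(C)}=\ri A(C)$. By~\cite[Theorem~6.6]{r}, $\ri A(C)=A(\ri C)=A(\ri\Omega)$, using $\ri\ri\Omega=\ri\Omega$. This gives all of (a).

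For (b), set $C_i:=\ri\Omega_i$, which are convex and satisfy $\bigcap C_i\neq\emptyset$. Then $\bigcap C_i\subset\bigcap\Omega_i\subset\bigcap\overline{C_i}$. By~\cite[Theorem~6.5]{r}, the nonempty intersection of relative interiors gives $\overline{\bigcap C_i}=\bigcap\overline{C_i}$ and $\ri\bigcap C_i=\bigcap\ri C_i=\bigcap C_i$. Therefore $\bigcap\Omega_i$ lies between the convex set $\bigcap C_i$ and its closure, so it is nearly convex. It follows that $\overline{\bigcap\Omega_i}=\overline{\bigcap C_i}=\bigcap\overline{C_i}=\bigcap\overline{\Omega_i}$, and $\ri\bigcap\Omega_i=\ri\bigcap C_i=\bigcap C_i=\bigcap\ri\Omega_i$.

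The only delicate step is the repeated fact that a set $S$ with $D\subset S\subset\overline D$, where $D$ is convex, satisfies $\ri S=\ri D$. This holds even though $S$ itself may be nonconvex. I would justify it as follows. First, $\mathrm{aff}\,S=\mathrm{aff}\,D$, because affine sets are closed. Then $\ri D\subset\ri S$, since a relative neighborhood contained in $D$ also lies in $S$. Conversely, $\ri S\subset\ri\overline D=\ri D$, because relative interior is monotone under inclusion when the affine hulls agree, and $\ri\overline D=\ri D$ by~\cite[Theorem~6.3]{r}. Once this is established, everything else is bookkeeping.
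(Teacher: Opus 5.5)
Your proof is correct. The paper gives no argument of its own for Proposition~\ref{T1}; it is quoted from \cite[Theorem~4.2 and Corollary~4.8]{mmw2016}, so there is no in-paper route to compare with, and your proposal stands as a self-contained proof.

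Your approach rests on two ingredients:
\begin{itemize}
\item The squeeze lemma in your last paragraph: if $D$ is convex and $D\subset S\subset\overline D$, then $\mathrm{aff}\,S=\mathrm{aff}\,D$ and $\ri S=\ri D$. This says exactly that $S$ is \emph{nearly equal} to $D$ in the sense of \cite{bmw2013}.
\item The convex core $\ri\Omega$, for which you correctly verify $\overline{\ri\Omega}=\overline\Omega$ and $\ri(\ri\Omega)=\ri\Omega$.
\end{itemize}
Together these reduce both parts to Theorems~6.3, 6.5 and 6.6 of \cite{r} for convex sets. In (b), the identity $\ri C_i=C_i$ for $C_i=\ri\Omega_i$ is what makes the hypothesis $\bigcap_i\ri C_i\neq\emptyset$ of Theorem~6.5 available. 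The same lemma also re-derives the facts recalled from \cite[Lemma~2.7]{bmw2013}, so your route needs only classical convex analysis and not the cited near-convexity literature.

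One small simplification: the identity $\overline{A(\Omega)}=\overline{A(\overline\Omega)}$ needs no convexity at all. For any set $\Omega$, continuity of $A$ gives $A(\Omega)\subset A(\overline\Omega)\subset\overline{A(\Omega)}$.
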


 \begin{proposition}\label{Connection} {\rm (See~\cite[Proposition~3.1]{nty1})}  Let $\Omega$ be a nearly convex set in $\R^n$. If $a\in \ri\Omega$ and $b\in \overline{\Omega}$, then $[a, b)\subset \ri\Omega.$
 \end{proposition}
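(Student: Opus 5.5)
The plan is to reduce to the classical convex case, where this is Rockafellar's Theorem~6.1: for a convex set $C$, $x\in\ri C$ and $y\in\overline C$ give $[x,y)\subset\ri C$. The reduction uses the facts recalled above from \cite[Lemma~2.7]{bmw2013}: since $\Omega$ is nearly convex, $\overline{\Omega}$ is convex and $\ri\Omega=\ri\overline{\Omega}$.

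Fix $a\in\ri\Omega$ and $b\in\overline{\Omega}$, and set $C:=\overline{\Omega}$. Then $C$ is a closed convex set with $\overline C=\overline{\Omega}$, so $b\in\overline C$. Also $a\in\ri\Omega=\ri C$. Applying \cite[Theorem~6.1]{r} to $C$ gives
$$(1-\lambda)a+\lambda b\in\ri C \quad \text{for all } \lambda\in[0,1).$$
Since $\ri C=\ri\overline{\Omega}=\ri\Omega$, this is exactly $[a,b)\subset\ri\Omega$.

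The only delicate point is the identity $\ri\Omega=\ri\overline{\Omega}$. I take it from the cited lemma. If I had to justify it directly, I would argue as follows.

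Choose a convex set $C_0$ with $C_0\subset\Omega\subset\overline{C_0}$. Then $\overline{\Omega}=\overline{C_0}$, and all three sets have the same affine hull. Relative interior is monotone among sets with a common affine hull, so
$$\ri C_0\subset\ri\Omega\subset\ri\overline{C_0}.$$
Rockafellar's Theorem~6.3 gives $\ri C_0=\ri\overline{C_0}$, so all three coincide.

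Some care is needed with monotonicity of $\ri$ for the possibly nonconvex $\Omega$. It holds here because $\ri$ is taken relative to $\mathrm{aff}\,\Omega=\mathrm{aff}\,C_0$. With that identity in place, the rest is a direct citation.
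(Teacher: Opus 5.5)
Your proof is correct. The paper gives no proof of its own here; it only cites \cite[Proposition~3.1]{nty1}, and your argument is the standard one behind that citation: use the identity $\ri\Omega=\ri\overline{\Omega}$, which the paper recalls just before the statement, and apply \cite[Theorem~6.1]{r} to the closed convex set $\overline{\Omega}$. Your direct verification of that identity, via the common affine hull, monotonicity of $\ri$ and \cite[Theorem~6.3]{r}, is also sound.
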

 
 \begin{proposition}\label{ri_prod}{\rm (See~\cite[Proposition~4.1]{mmw2016}) }
Let $\Omega_1,\dots, \Omega_m$ be nearly convex sets in $\R^n$. Then,
$\mbox{\rm ri}(\Omega_1\times\dots\times \Omega_m)=\ri\Omega_1\times\ri \Omega_m.$
 	\end{proposition}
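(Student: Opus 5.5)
The plan is to reduce the statement to the classical product formula for relative interiors of convex sets, applied to the closures. The statement is to be read as $\ri(\Omega_1\times\dots\times\Omega_m)=\ri\Omega_1\times\dots\times\ri\Omega_m$, the middle factors having been dropped in the display. The ingredients are the facts recalled after Definition~\ref{def_Minty}: for a nearly convex $\Omega$, the sets $\ri\Omega$ and $\overline{\Omega}$ are convex, and $\ri\Omega=\ri\overline{\Omega}$.

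\textbf{Step 1: the product is nearly convex.} For each $i$ choose a convex $C_i$ with $C_i\subset\Omega_i\subset\overline{C_i}$. Put $C:=C_1\times\dots\times C_m$ and $\Omega:=\Omega_1\times\dots\times\Omega_m$. Then $C$ is convex, and $C\subset\Omega\subset\overline{C_1}\times\dots\times\overline{C_m}=\overline{C}$, because the closure of a finite product is the product of the closures. Hence $\Omega$ is nearly convex and $\ri\Omega=\ri\overline{\Omega}$. Moreover $\overline{\Omega}=\overline{\Omega_1}\times\dots\times\overline{\Omega_m}$.

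\textbf{Step 2: reduce to the convex case.} Each $\overline{\Omega_i}$ is convex, so we may invoke the convex result $\ri(D_1\times\dots\times D_m)=\ri D_1\times\dots\times\ri D_m$ for convex $D_i$. This is Rockafellar's Theorem~6.8 in~\cite{r}, or it follows by induction from the case $m=2$. This gives
$$\ri\Omega=\ri\overline{\Omega}=\ri\overline{\Omega_1}\times\dots\times\ri\overline{\Omega_m}=\ri\Omega_1\times\dots\times\ri\Omega_m.$$

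\textbf{Main obstacle: the convex product formula, if it may not simply be cited.} To prove it, first note that $\operatorname{aff}(D_1\times\dots\times D_m)=\operatorname{aff}D_1\times\dots\times\operatorname{aff}D_m$. This holds because a product of affine sets is affine, and any affine set containing the product contains each slice, hence all these coordinatewise combinations. Next, relative interiors are interiors relative to the affine hulls. A product of relatively open sets is relatively open in the product of the affine hulls, which gives $\supset$. Conversely, a product neighbourhood inside the product set projects to neighbourhoods inside each factor, which gives $\subset$. Combining these, the identity follows directly.
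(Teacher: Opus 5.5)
Your proof is correct, and the reading of the display as $\ri\Omega_1\times\dots\times\ri\Omega_m$ is right. The paper gives no argument of its own here; it only cites \cite[Proposition~4.1]{mmw2016}. Your reduction is a complete, self-contained substitute for that citation, using only facts the paper already recalls: near convexity of the product via the convex set $C_1\times\dots\times C_m$, then $\ri\Omega=\ri\overline{\Omega}$ with $\overline{\Omega}=\overline{\Omega_1}\times\dots\times\overline{\Omega_m}$, and finally the product formula for relative interiors of the convex sets $\overline{\Omega_i}$ (a consequence of \cite[Theorem~6.8]{r}, or your direct sketch).

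Your argument also never uses that the factors lie in the same space. It therefore covers the case the paper actually needs in the proof of Lemma~\ref{nc_funct}, namely $\Omega_0\times(-\infty,0]\subset\R^n\times\R$, which the statement as written (all $\Omega_i\subset\R^n$) does not literally include.
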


 \begin{definition}\label{normal_cone} {\rm Given a nearly convex set $\Omega$ in $\R^n$ and $\ox\in \Omega$, we define the {\em normal cone} to $\Omega$ at $\ox$ by setting
 		\begin{equation*}
 			N(\ox; \Omega)=\big\{x^*\in \R^n\mid  \langle  x^*, x-\ox\rangle \leq 0\ \, \mbox{\rm for all }\, x\in \Omega\big\}.
 		\end{equation*}  For any $\ox\notin \Omega$, we put $N(\ox; \Omega)=\emptyset$.
 	}
 \end{definition}
 
 \begin{proposition}\label{NCI} {\rm (See~\cite[Theorem~5.1]{nty1})} Let $\Omega_1$ and $\Omega_2$ be nearly convex sets  in~$\mathbb R^n$ such that
 	$\ri\Omega_1\cap \ri\Omega_2\neq \emptyset$.
 	Then $\Omega_1\cap \Omega_2$ is nearly convex and
 	\begin{equation*}
 		N(\ox; \Omega_1\cap \Omega_2)=N(\ox; \Omega_1)+N(\ox; \Omega_2)\ \, \mbox{for all }\, \ox\in \Omega_1\cap \Omega_2.
 	\end{equation*}
 \end{proposition}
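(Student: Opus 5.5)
The plan is to reduce Proposition~\ref{NCI} to the classical normal-cone intersection rule for closed convex sets. The bridge is the observation that, for a nearly convex set, the normal cone does not change when the set is replaced by its closure or by its relative interior.

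First, near convexity of $\Omega_1\cap\Omega_2$ follows at once from Proposition~\ref{T1}(b) with $m=2$, since $\ri\Omega_1\cap\ri\Omega_2\neq\emptyset$. The same result gives $\overline{\Omega_1\cap\Omega_2}=\overline{\Omega_1}\cap\overline{\Omega_2}$.

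Next, I would prove the auxiliary fact
\[
N(\ox;\Omega)=N(\ox;\overline{\Omega})\quad\text{for every nearly convex } \Omega \text{ and every } \ox\in\Omega.
\]
The inclusion $\supset$ is trivial, because $\Omega\subset\overline{\Omega}$. For $\subset$, take $x^*\in N(\ox;\Omega)$ and $x\in\overline{\Omega}$, and pick $x_k\in\Omega$ with $x_k\to x$. Then $\langle x^*,x_k-\ox\rangle\le 0$ for all $k$, and continuity gives $\langle x^*,x-\ox\rangle\le 0$.

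Now set $C_i=\overline{\Omega_i}$, which are closed convex sets by \cite[Lemma~2.7]{bmw2013}. Since $\ri C_i=\ri\Omega_i$, we have $\ri C_1\cap\ri C_2\neq\emptyset$. The classical result for convex sets \cite[Corollary~23.8.1]{r} then gives $N(\ox;C_1\cap C_2)=N(\ox;C_1)+N(\ox;C_2)$ for every $\ox\in C_1\cap C_2$, in particular for $\ox\in\Omega_1\cap\Omega_2$. Applying the auxiliary fact to the three nearly convex sets $\Omega_1$, $\Omega_2$ and $\Omega_1\cap\Omega_2$, and using $\overline{\Omega_1\cap\Omega_2}=C_1\cap C_2$, we obtain
\[
N(\ox;\Omega_1\cap\Omega_2)=N(\ox;C_1\cap C_2)=N(\ox;C_1)+N(\ox;C_2)=N(\ox;\Omega_1)+N(\ox;\Omega_2).
\]

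The only substantive input is the closure identity from Proposition~\ref{T1}(b). Without it the reduction fails, because $\overline{\Omega_1\cap\Omega_2}$ could be strictly smaller than $\overline{\Omega_1}\cap\overline{\Omega_2}$. The inclusion $\supset$ in the final formula holds trivially in any case. The inclusion $\subset$ is the one that genuinely needs both the relative-interior qualification (through Rockafellar's theorem) and the closure identity.
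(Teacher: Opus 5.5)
Your proof is correct. Proposition~\ref{T1}(b) gives both the near convexity of $\Omega_1\cap\Omega_2$ and the identity $\overline{\Omega_1\cap\Omega_2}=\overline{\Omega_1}\cap\overline{\Omega_2}$. Your argument for $N(\ox;\Omega)=N(\ox;\overline{\Omega})$ is sound; it holds for an arbitrary set, so near convexity plays no role there. Finally, $\ri\overline{\Omega_i}=\ri\Omega_i$ is exactly what is needed to invoke \cite[Corollary~23.8.1]{r}.

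The paper gives no proof of its own; it imports the result from \cite[Theorem~5.1]{nty1}. Its only hint about that proof (in Subsection~\ref{Fermat_subsection}) is that it passes through the indicator functions $\delta_{\Omega_i}$, whose epigraphs $\Omega_i\times\R_+$ are nearly convex.

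Your route never leaves the level of sets and treats the statement as the classical convex rule in disguise. All genuinely nearly convex content sits in the closure identity of \cite{mmw2016}. The remaining step is the same limiting argument the paper uses to prove \eqref{normal_n1}, and the same closure-identity pattern it uses in Lemma~\ref{lem4.3}. This buys brevity and shows exactly where $\ri\Omega_1\cap\ri\Omega_2\neq\emptyset$ enters: once for the closure identity and once for Rockafellar's qualification.

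Within the paper's toolbox, you could also get the formula in one line from Proposition~\ref{sum_rule} applied to $\delta_{\Omega_1}+\delta_{\Omega_2}=\delta_{\Omega_1\cap\Omega_2}$, just as the paper passes from \eqref{optimality-1a} to \eqref{optimality-1b}. That is shorter but leans on the nearly convex subdifferential sum rule, and would be circular if that rule is itself derived from the intersection rule. Your reduction uses only set-level facts and classical convex analysis.
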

 
 \section{Nearly Convex Optimization Problems}\label{nc_problems}

First, let us recall the concept of nearly convex function and some known results.
 
\subsection{Nearly convex functions} 

Given an extended-real-valued function $f\colon \R^n\to \oR=[-\infty, \infty]$, the {\em effective domain} and the  {\em epigraph} of $f$ are given  respectively by $\dom f=\big\{x\in \R^n\; \big|\; f(x)<\infty\big\}$ and
$$\epi f=\big\{(x, \lambda)\in \R^n\times \R\; \big|\; f(x)\leq \lambda\big\}.$$
We say that $f$ is {\em proper} if $\dom f\neq\emptyset$ and  $f(x)>-\infty$ for all $x\in \R^n$. 
 
 \begin{definition}\label{nc_function} {\rm (See, e.g.,~\cite[p.~608]{nty1})} {\rm  A function $f\colon \R^n\to \oR$ said to be {\em nearly convex} if $\epi f$ is a nearly convex set in  $\R^n\times\mathbb R$.
 	}
 \end{definition}

The following proposition concerns the near convexity property of the effective domain of a nearly convex function.
 
 \begin{proposition}\label{nc dom} {\rm (See~\cite[Proposition~3.4]{nty1})} If  $f\colon\R^n\to \oR$ is a nearly convex function, then $\dom f$ is a nearly convex set.
 \end{proposition}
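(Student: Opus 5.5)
The plan is to realize $\dom f$ as a linear image of $\epi f$ and then apply Proposition~\ref{T1}(a). Let $P\colon \R^n\times\R\to\R^n$ be the projection $P(x,\lambda)=x$, which is linear.

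The first step is to check the identity $P(\epi f)=\dom f$.

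\emph{Inclusion $\subset$.} Take $(x,\lambda)\in\epi f$. Then $f(x)\le\lambda<\infty$, so $x\in\dom f$.

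\emph{Inclusion $\supset$.} Take $x\in\dom f$, so $f(x)<\infty$. Two cases arise:
\begin{itemize}
\item if $f(x)\in\R$, choose $\lambda=f(x)$;
\item if $f(x)=-\infty$, any real $\lambda$ works.
\end{itemize}
In both cases $(x,\lambda)\in\epi f$, hence $x\in P(\epi f)$. Note that no properness assumption is needed, since values $-\infty$ are allowed and handled by the second case.

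The second step is to use the hypothesis. Since $f$ is nearly convex, $\epi f$ is a nearly convex subset of $\R^{n+1}$ by Definition~\ref{nc_function}. Proposition~\ref{T1}(a) then gives that $P(\epi f)$ is nearly convex in $\R^n$. By the first step, $P(\epi f)=\dom f$, so $\dom f$ is nearly convex.

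If $\epi f=\emptyset$, then $\dom f=\emptyset$, and $\emptyset$ is nearly convex by taking $C=\emptyset$ in Definition~\ref{def_Minty}. This case is consistent with the argument above and needs no separate treatment.

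A self-contained alternative avoids Proposition~\ref{T1}. Take a convex $C$ with $C\subset\epi f\subset\overline C$. Then $P(C)$ is convex and $P(C)\subset\dom f$. Moreover, $P(\overline C)\subset\overline{P(C)}$ by continuity of $P$, so $\dom f\subset\overline{P(C)}$.

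There is no real obstacle here. The only point needing care is the $-\infty$ case in the identity $P(\epi f)=\dom f$.
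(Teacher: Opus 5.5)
Your argument is correct: the identity $\dom f=P(\epi f)$ (with the $-\infty$ case handled as you do), combined with Proposition~\ref{T1}(a), proves the claim. The paper quotes this statement from \cite[Proposition~3.4]{nty1} without proof, but your projection is the same device $A(x,\mu)=x$ it uses in the proofs of Lemmas~\ref{nc_funct}, \ref{lem4.2} and~\ref{lem4.3}. Your self-contained variant, $P(C)\subset\dom f\subset P(\overline{C})\subset\overline{P(C)}$, is equally valid and simply inlines the elementary part of Proposition~\ref{T1}(a) needed here.
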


 \begin{proposition}\label{nc sum f}  {\rm (See~\cite[Corollary~4.3]{nty1})} Suppose that $f_1, f_2\colon \R^n\to \oR$ are proper nearly convex functions. If
  		$\mbox{\rm ri}(\dom f_1)\cap \mbox{\rm ri}(\dom f_2)\neq \emptyset$,
 	then $f_1+f_2$ is nearly convex.
 \end{proposition}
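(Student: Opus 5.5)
We prove Proposition~\ref{nc sum f} by realizing $\epi(f_1+f_2)$, up to closure, as a linear image of an intersection of two nearly convex sets, and then applying Proposition~\ref{T1}.

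\textbf{Setup.} In $\R^n\times\R\times\R$ consider
$$\Omega_1=\big\{(x,\lambda_1,\lambda_2)\mid (x,\lambda_1)\in\epi f_1\big\},\qquad \Omega_2=\big\{(x,\lambda_1,\lambda_2)\mid (x,\lambda_2)\in\epi f_2\big\}.$$
Up to a permutation of coordinates, $\Omega_1=\epi f_1\times\R$ and $\Omega_2=\epi f_2\times\R$. If $C_i\subset\epi f_i\subset\overline{C_i}$ with $C_i$ convex, then $C_i\times\R\subset\Omega_i\subset\overline{C_i\times\R}$, so each $\Omega_i$ is nearly convex. By Proposition~\ref{ri_prod} (with $\ri\R=\R$), $\ri\Omega_1=\ri(\epi f_1)\times\R$, and similarly for $\Omega_2$.

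\textbf{Relative interiors of the epigraphs.} We need the classical description
$$\ri(\epi f_i)=\big\{(x,\lambda)\mid x\in\ri(\dom f_i),\ f_i(x)<\lambda\big\},$$
adapted to nearly convex functions. Write $E=\epi f_i$. By \cite[Lemma~2.7]{bmw2013}, $\ri E=\ri\overline{E}$, and $\overline{E}$ is a closed convex set that is the epigraph of the closed convex hull $g=\operatorname{cl} f_i$. By \cite[Lemma~7.3]{r},
$$\ri\overline E=\{(x,\lambda)\mid x\in\ri\dom g,\ g(x)<\lambda\}.$$
The domain $\dom f_i$ is nearly convex (Proposition~\ref{nc dom}) and is the projection of $E$, so Proposition~\ref{T1}(a) gives $\ri\dom g=\ri\dom f_i$. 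Moreover $f_i=g$ on $\ri\dom f_i$. To see this, take $x$ in $\ri\dom f_i$ and $(x,\lambda)\in\ri\overline E=\ri E\subset E$; this yields $f_i(x)\le\lambda$ for every $\lambda>g(x)$, hence $f_i(x)\le g(x)\le f_i(x)$.

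\textbf{Applying Proposition~\ref{T1}.} Pick $x_0\in\ri(\dom f_1)\cap\ri(\dom f_2)$ and large $\lambda_1,\lambda_2$. Then $(x_0,\lambda_1,\lambda_2)\in\ri\Omega_1\cap\ri\Omega_2$, so by Proposition~\ref{T1}(b) the set $\Omega=\Omega_1\cap\Omega_2$ is nearly convex. Let $A(x,\lambda_1,\lambda_2)=(x,\lambda_1+\lambda_2)$. By Proposition~\ref{T1}(a), $A(\Omega)$ is nearly convex. Properness of $f_1,f_2$ makes the sum well defined, and
$$A(\Omega)=\big\{(x,\mu)\mid \exists\,\lambda_i\ge f_i(x),\ \lambda_1+\lambda_2=\mu\big\}=\epi(f_1+f_2).$$
The inclusion $\supseteq$ follows by taking $\lambda_1=f_1(x)$ and $\lambda_2=\mu-f_1(x)$, which is legitimate because $f_1(x)$ is finite when $x\in\dom(f_1+f_2)$. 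Hence $\epi(f_1+f_2)$ is nearly convex, which is the claim.

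\textbf{Main obstacle.} The delicate step is the relative-interior description of $\epi f_i$ for a merely nearly convex $f_i$. The reduction to the closed convex set $\overline{\epi f_i}$ via $\ri E=\ri\overline E$ is the point that requires care; everything else is a routine application of Propositions~\ref{T1} and~\ref{ri_prod}.
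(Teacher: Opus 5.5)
Your proof is correct and complete. The paper gives no argument of its own here; it imports the result from \cite[Corollary~4.3]{nty1}. Your construction is the standard route: you lift $\epi f_1,\epi f_2$ to $\R^n\times\R\times\R$, intersect them via Proposition~\ref{T1}(b), and project by $(x,\lambda_1,\lambda_2)\mapsto(x,\lambda_1+\lambda_2)$ via Proposition~\ref{T1}(a). This is the same intersect-then-project device the paper uses for $D$ in Lemma~\ref{nc_funct}.

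The step you flag as the main obstacle is already stated in the paper as Proposition~\ref{riepi}, so it can be cited rather than rederived. In fact you only need its reverse inclusion at the single point $x_0$. Since $\lambda_i>f_i(x_0)\ge\bar f_i(x_0)$ and $x_0\in\ri(\dom\bar f_i)$, \cite[Lemma~7.3]{r} already gives $(x_0,\lambda_i)\in\ri(\epi\bar f_i)=\ri(\epi f_i)$. So the identity $f_i=\bar f_i$ on $\ri(\dom f_i)$ is not required.
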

 
The next proposition gives a formula for the relative interior of the epigraph of a nearly convex function, which extends and deepens the result in~\cite[Proposition~4.4]{LM2019}.
 
 \begin{proposition}\label{riepi} {\rm (See~\cite[Proposition~3.7]{nty1})} Let $f\colon \R^n\to \oR$ be an arbitrary function. Then,
 	\begin{equation}\label{epi rep1}
 		\mbox{\rm ri}(\epi f)\subset\big\{(x, \lambda)\in \R^n\times \R\mid x\in \mbox{\rm ri}(\dom f),\ \lambda>f(x)\big\}.
 	\end{equation}
 	In addition, if $f$ is nearly convex, then the reverse inclusion~\eqref{epi rep1} holds, i.e.,
 	\begin{equation}\label{epi rep1a}
 		\mbox{\rm ri}(\epi f)=\big\{(x, \lambda)\in \R^n\times \R\mid x\in \mbox{\rm ri}(\dom f),\ \lambda>f(x)\big\}.
 	\end{equation}
 \end{proposition}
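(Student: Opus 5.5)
The plan is to prove the two parts separately. The inclusion \eqref{epi rep1} will come from projecting onto the $x$-coordinate, which requires no convexity. The reverse inclusion for nearly convex $f$ will come from Proposition~\ref{Connection} and the fact that the relative interior commutes with linear images of nearly convex sets.

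\emph{Step 1: the inclusion \eqref{epi rep1} for arbitrary $f$.} Let $(x,\lambda)\in\ri(\epi f)$. Then there is $\varepsilon>0$ such that $B((x,\lambda),\varepsilon)\cap{\rm aff}(\epi f)\subset\epi f$.
\begin{itemize}
\item Since $\epi f$ is closed upward in $\lambda$, the affine hull ${\rm aff}(\epi f)$ contains the vertical direction $(0,1)$ (assuming $\epi f\neq\emptyset$). Hence $(x,\lambda-\varepsilon/2)\in\epi f$, which gives $f(x)\le\lambda-\varepsilon/2<\lambda$.
\item Let $P(x,\lambda)=x$. Then $P(\epi f)=\dom f$, and ${\rm aff}(\epi f)=P^{-1}({\rm aff}(\dom f))$; this uses the vertical direction again, so ${\rm aff}(\epi f)={\rm aff}(\dom f)\times\R$.
\item Take $z\in{\rm aff}(\dom f)$ close to $x$. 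Then $(z,\lambda)$ lies in ${\rm aff}(\epi f)$ and is close to $(x,\lambda)$, so $(z,\lambda)\in\epi f$ and therefore $z\in\dom f$. This shows $x\in\ri(\dom f)$.
\end{itemize}
Steps 1 and 3 only need the elementary description of the affine hull. The identity ${\rm aff}(\epi f)={\rm aff}(\dom f)\times\R$ is the one point I would verify carefully, by checking that affine combinations of points of $\epi f$ project onto affine combinations of points of $\dom f$, and conversely lifting them with arbitrary heights.

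\emph{Step 2: the reverse inclusion when $f$ is nearly convex.} Let $x\in\ri(\dom f)$ and $\lambda>f(x)$.
\begin{itemize}
\item By Proposition~\ref{T1}(a) applied to $P$, we have $\ri(\dom f)=\ri P(\epi f)=P(\ri(\epi f))$. So there is $\mu$ with $(x,\mu)\in\ri(\epi f)$, and by Step 1, $\mu>f(x)$.
\item If $\lambda\ge\mu$, write $(x,\lambda)=(x,\mu)+t(0,1)$ with $t=\lambda-\mu\ge0$. Since $(0,1)$ lies in the linear subspace parallel to ${\rm aff}(\epi f)$, and the set $\epi f$ is upward closed, a small ball around $(x,\lambda)$ in the affine hull is the translate of the corresponding ball around $(x,\mu)$ by $t(0,1)$. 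That translate stays in $\epi f$, so $(x,\lambda)\in\ri(\epi f)$.
\item If $f(x)<\lambda<\mu$, pick $\nu$ with $f(x)\le\nu<\lambda$ and $\nu>-\infty$, so that $(x,\nu)\in\epi f\subset\overline{\epi f}$. This is possible since $f(x)<\lambda$; when $f(x)=-\infty$, any real $\nu<\lambda$ works. The point $(x,\lambda)$ lies on the half-open segment $[(x,\mu),(x,\nu))$, so Proposition~\ref{Connection}, applied to the nearly convex set $\epi f$, gives $(x,\lambda)\in\ri(\epi f)$.
\end{itemize}

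The main obstacle I expect is the bookkeeping with the affine hull in the vertical direction, used both in Step 1 and in the upward-translation case. Everything else reduces directly to Proposition~\ref{T1}(a) and Proposition~\ref{Connection}. The degenerate case $\epi f=\emptyset$ is trivial, since then both sides of \eqref{epi rep1a} are empty.
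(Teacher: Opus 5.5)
Your proof is correct, including the degenerate cases $\epi f=\emptyset$ and $f(x)=-\infty$. The paper gives no argument of its own for this proposition; it is quoted from \cite[Proposition~3.7]{nty1}, so there is no internal proof to match.

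Your route shows that the statement follows from two facts the paper already records, plus elementary affine-hull observations. Proposition~\ref{T1}(a), applied to the projection $P(x,\lambda)=x$ with $P(\epi f)=\dom f$, produces some $(x,\mu)\in\ri(\epi f)$ over each $x\in\ri(\dom f)$. Proposition~\ref{Connection} then lets you move along the vertical line through $x$. The affine-hull facts you need are that $(0,1)$ is parallel to ${\rm aff}(\epi f)$ and that ${\rm aff}(\dom f)\times\R\subset{\rm aff}(\epi f)$.

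This choice matters within the paper. The tempting shortcut would be to pass to $\overline{\epi f}=\epi\bar f$ and quote the convex formula for $\ri(\epi\bar f)$. That would be circular here, because translating back to $f$ requires ${\rm ri}(\dom\bar f)={\rm ri}(\dom f)$ and $\bar f=f$ on ${\rm ri}(\dom f)$. Those identities are Lemma~\ref{eq_in_ridom}, whose proof in the paper invokes this very proposition for both $f$ and $\bar f$.

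One small simplification: the translation argument for $\lambda\ge\mu$ is not needed. Proposition~\ref{Connection} with $a=(x,\mu)$ and $b=(x,\lambda+1)\in\epi f$ already gives $(x,\lambda)\in[a,b)\subset\ri(\epi f)$. So both cases of the reverse inclusion reduce to a single application of the segment property, with $b=(x,\nu)\in\epi f$ chosen on the opposite side of $\lambda$ from $\mu$.
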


\subsection{Subdifferential}  

 \begin{definition}\label{subdifferential} {\rm  Let $f\colon \R^n\to \oR$ be an extended-real-valued function. The \textit{subdifferential} in the sense of convex analysis of $f$ at $\ox\in \R^n$ with $f(\ox)\in\R$ is defined by
 		\begin{equation*}
 			\partial f(\ox)=\big\{x^*\in \R^n\mid  \langle  x^*, x-\ox\rangle \leq f(x)-f(\ox)\ \, \mbox{\rm for all }\, x\in \R^n \big\}.
 		\end{equation*}
 	}
 \end{definition}
 
The subdifferential sum rule for proper, nearly convex functions is stated as follows.
 
 \begin{proposition}\label{sum_rule} {\rm (See~\cite[Corollary~4.3]{nty1})} Let $f_i\colon \R^n\to \oR$, $i=1, \ldots, m$ be proper nearly convex functions. If
  	$\bigcap\limits_{i=1}^m \mbox{\rm ri}(\dom f_i)\neq\emptyset$,
 	then the sum function $f_1+\cdots+f_m$ is nearly convex and the equality
 	\begin{equation*}
 		\partial (f_1+\cdots +f_m)(\ox)=\partial f(\ox)+\cdots +\partial f_m(\ox)
 	\end{equation*} holds for every $\ox\in \bigcap\limits_{i=1}^m \dom f_i$.
 \end{proposition}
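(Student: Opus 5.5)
We argue by induction on $m$, so the work reduces to the case of two functions; for $m=1$ there is nothing to prove.

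For two proper nearly convex functions $f_1,f_2$ with $\ri(\dom f_1)\cap\ri(\dom f_2)\neq\emptyset$, near convexity of $f_1+f_2$ is Proposition~\ref{nc sum f}. The inclusion $\partial f_1(\ox)+\partial f_2(\ox)\subset\partial(f_1+f_2)(\ox)$ is immediate: add the two subgradient inequalities, and note that $f_1,f_2$ are proper, so no $\infty-\infty$ arises.

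For the reverse inclusion, take $x^*\in\partial(f_1+f_2)(\ox)$ with $\ox\in\dom f_1\cap\dom f_2$. Following the standard proof of the Moreau--Rockafellar theorem, we work in $\R^n\times\R\times\R$ with the sets
\[
\Omega_1=\{(x,\lambda_1,\lambda_2)\mid \lambda_1\ge f_1(x)\},\qquad \Omega_2=\{(x,\lambda_1,\lambda_2)\mid \lambda_2\ge f_2(x)\}.
\]
Each is a permutation of $\epi f_i\times\R$. Such a set is nearly convex: a product of nearly convex sets is nearly convex, since if $C\subset\epi f_i\subset\bar C$ then $C\times\R\subset\epi f_i\times\R\subset\overline{C\times\R}$. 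By Propositions~\ref{ri_prod} and~\ref{riepi}, its relative interior is described through $\ri(\dom f_i)$ and the strict inequality $\lambda_i>f_i(x)$.

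Choose $x_0\in\ri(\dom f_1)\cap\ri(\dom f_2)$ and $\lambda_i>f_i(x_0)$. Then $(x_0,\lambda_1,\lambda_2)\in\ri\Omega_1\cap\ri\Omega_2$. Proposition~\ref{NCI} therefore gives the normal cone sum rule at $(\ox,f_1(\ox),f_2(\ox))$.

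The subgradient condition says $(x^*,-1,-1)\in N\big((\ox,f_1(\ox),f_2(\ox));\Omega_1\cap\Omega_2\big)$. Indeed, for $(x,\lambda_1,\lambda_2)\in\Omega_1\cap\Omega_2$ we have $\langle x^*,x-\ox\rangle\le f_1(x)+f_2(x)-f_1(\ox)-f_2(\ox)\le(\lambda_1-f_1(\ox))+(\lambda_2-f_2(\ox))$. Decomposing this vector via Proposition~\ref{NCI} gives
\[
(x^*,-1,-1)=(u_1,-\mu_1,0)+(u_2,0,-\mu_2).
\]
The second component of any normal to $\Omega_2$ must vanish, because $\lambda_1$ is free in $\Omega_2$; symmetrically for $\Omega_1$. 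Hence $\mu_1=\mu_2=1$, and $(u_i,-1)\in N((\ox,f_i(\ox));\epi f_i)$, which is equivalent to $u_i\in\partial f_i(\ox)$.

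The inductive step applies the two-function case to $g=f_1+\cdots+f_{m-1}$ and $f_m$. By the induction hypothesis $g$ is nearly convex, and it is proper. We also need $\ri(\dom g)\cap\ri(\dom f_m)\neq\emptyset$. Since $\dom g=\bigcap_{i<m}\dom f_i$ and each $\dom f_i$ is nearly convex (Proposition~\ref{nc dom}), Proposition~\ref{T1}(b) gives $\ri(\dom g)=\bigcap_{i<m}\ri(\dom f_i)$, so the hypothesis supplies the needed common point.

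The main obstacle is the bookkeeping in the lifted space. We must check carefully that the products and coordinate permutations preserve near convexity, and that the relative interior formulas combine correctly, so that Proposition~\ref{NCI} applies. The remaining steps are routine once this is in place.
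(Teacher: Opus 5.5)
Your proof is correct. The paper itself gives no proof of this proposition; it imports it from \cite[Corollary~4.3]{nty1}, the same reference it gives for Proposition~\ref{nc sum f}. So there is no in-paper argument to match, and your proposal is a self-contained derivation from results the paper already records.

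Your argument is the nearly convex analogue of the geometric proof of the Moreau--Rockafellar theorem, and each step checks out:
\begin{itemize}
\item The qualification $(x_0,\lambda_1,\lambda_2)\in\ri\Omega_1\cap\ri\Omega_2$ follows from Propositions~\ref{ri_prod} and~\ref{riepi}; properness guarantees $f_i(x_0)\in\R$.
\item Proposition~\ref{NCI} applies at $(\ox,f_1(\ox),f_2(\ox))$.
\item The membership $(x^*,-1,-1)\in N\big((\ox,f_1(\ox),f_2(\ox));\Omega_1\cap\Omega_2\big)$ holds, using that $(x,\lambda_1,\lambda_2)\in\Omega_1\cap\Omega_2$ forces $x\in\dom f_1\cap\dom f_2$.
\item The cylinder structure of each $\Omega_i$ kills the free component, which forces $\mu_1=\mu_2=1$. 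The relation $(u_i,-1)\in N\big((\ox,f_i(\ox));\epi f_i\big)$ is then equivalent to $u_i\in\partial f_i(\ox)$.
\item In the induction, Proposition~\ref{T1}(b) applied to the nearly convex sets $\dom f_i$ correctly gives $\ri(\dom g)=\bigcap_{i<m}\ri(\dom f_i)$. The step also uses the induction hypothesis $\partial g(\ox)=\sum_{i<m}\partial f_i(\ox)$, which you leave implicit but which is available.
\end{itemize}

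You can also drop the appeal to Proposition~\ref{nc sum f}, which comes from the same cited corollary. For proper $f_1,f_2$ one has $\epi(f_1+f_2)=L(\Omega_1\cap\Omega_2)$ with the linear map $L(x,\lambda_1,\lambda_2)=(x,\lambda_1+\lambda_2)$. Hence near convexity of $f_1+f_2$ follows from Proposition~\ref{NCI} together with Proposition~\ref{T1}(a), applied to the sets you already constructed. With that change, the whole statement rests only on the normal cone intersection rule and the linear-image and relative-interior facts of Section~2.
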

 
 By definition, a function $f\colon \R^n\to \oR$ is continuous at $\ox\in \R^n$ if $\ox\in \mbox{\rm int}(\dom f)$ and for every $\varepsilon>0$ there is  $\delta>0$ such that $B(\ox; \delta)\subset \dom f$ and $|f(x)-f(\ox)|<\varepsilon$ for all $x\in B(\ox; \delta)$.

The \textit{maximum function} of a finite family of functions $f_i\colon \R^n \to \oR$, $i=1, \ldots, m$ is defined by
 \begin{equation}\label{MF}
 	f(x)=\max\big\{f_i(x)\mid  i=1, \ldots, m\big\},\ \; x\in \R^n.
 \end{equation}

 \begin{proposition}\label{NCMf} {\rm (See~\cite[Corollary~4.10]{nty1})} Suppose that $f_i\colon \R^n\to \oR$ for $i=1, \ldots, m$ are nearly convex functions. If
  		$\bigcap\limits_{i=1}^m \mbox{\rm ri}(\dom f_i)\neq\emptyset$,
  	then the function $f$ defined in~\eqref{MF} is nearly convex.
 \end{proposition}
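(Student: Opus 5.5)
Following the paper's citation, this result is taken from \cite[Corollary~4.10]{nty1}. Here is how I would prove it directly from the tools already stated. The key identity is
\begin{equation*}
\epi f=\bigcap_{i=1}^m \epi f_i,
\end{equation*}
which holds because $\max_i f_i(x)\le\lambda$ if and only if $f_i(x)\le\lambda$ for every $i$. Each $\epi f_i$ is nearly convex by Definition~\ref{nc_function}. So near convexity of $\epi f$ will follow from Proposition~\ref{T1}(b), applied in $\R^n\times\R$, once I show that
\begin{equation*}
\bigcap_{i=1}^m \ri(\epi f_i)\neq\emptyset .
\end{equation*}

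\textbf{Finding a common point.} Pick $\bar x\in\bigcap_{i=1}^m\ri(\dom f_i)$, which exists by hypothesis. By Proposition~\ref{riepi}, formula~\eqref{epi rep1a} applies to each nearly convex $f_i$, so
\begin{equation*}
\ri(\epi f_i)=\big\{(x,\lambda)\mid x\in\ri(\dom f_i),\ \lambda>f_i(x)\big\}.
\end{equation*}
It therefore suffices to find a real $\lambda$ with $\lambda>f_i(\bar x)$ for all $i$. This works exactly when every $f_i(\bar x)<+\infty$, which holds because $\bar x\in\dom f_i$. Then $\lambda:=\max_i f_i(\bar x)+1$ is a real number when all values are finite. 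If some $f_i(\bar x)=-\infty$, any $\lambda$ exceeding the finite values works (or $\lambda=0$ if all are $-\infty$). In every case $(\bar x,\lambda)$ lies in every $\ri(\epi f_i)$.

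\textbf{Main obstacle.} The delicate step is justifying that formula~\eqref{epi rep1a} holds for functions that are not assumed proper, including the possibility of the value $-\infty$. Proposition~\ref{riepi} is stated for arbitrary nearly convex $f$, so I would rely on it as stated. As a sanity check, for $f_i(\bar x)=-\infty$ every vertical point $(\bar x,\lambda)$ lies in $\epi f_i$, and relative interiority comes from $\bar x\in\ri(\dom f_i)$. With the nonemptiness established, Proposition~\ref{T1}(b) gives that $\epi f$ is nearly convex. Hence $f$ is nearly convex by Definition~\ref{nc_function}.
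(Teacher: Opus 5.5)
Your proof is correct. The argument runs as follows: the identity $\epi f=\bigcap_{i=1}^m\epi f_i$, then a real $\lambda>\max_i f_i(\bar x)$ (which exists because $\bar x\in\dom f_i$ for every $i$), then Proposition~\ref{riepi} to place $(\bar x,\lambda)$ in every $\ri(\epi f_i)$, and finally Proposition~\ref{T1}(b) in $\R^n\times\R$. The paper does not prove this statement itself; it only cites \cite[Corollary~4.10]{nty1}. Your argument is the same mechanism the paper uses in the proof of Lemma~\ref{lem4.2} to get a common point of the sets $\ri(\epi g_i)$. The step you flag as the main obstacle is not one: Proposition~\ref{riepi} is stated for arbitrary nearly convex functions with no properness assumption, so the case $f_i(\bar x)=-\infty$ is already covered. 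Your ``sanity check'' about vertical points is therefore unnecessary, and on its own it would not justify relative interiority.
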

 
The subdifferential of a maximum function can be computed by using the next proposition.
 
 \begin{proposition}\label{diff_max} {\rm (See~\cite[Corollary~5.10]{nty1})} Let $f_i\colon \R^n\to \oR$  for $i=1, \ldots, m$  be proper nearly convex functions. Suppose that all the functions $f_i$ are continuous at $\ox\in \R^n$. Then, the function $f$ defined by~\eqref{MF} is also nearly convex, and one has
 	\begin{equation*}
 		\partial f(\ox)=\co \big[\bigcup_{i\in I(\ox)}\partial f_i(\ox)\big],
 	\end{equation*}
 	where $I(\ox)=\big\{i=1, \ldots, m\mid f_i(\ox)=f(\ox)\big\}$.
 \end{proposition}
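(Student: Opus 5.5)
The plan is to reduce the statement to the classical formula for the subdifferential of a maximum of finitely many convex functions that are continuous at $\ox$. The reduction has three parts: each $f_i$ is genuinely convex near $\ox$; for a nearly convex function whose effective domain contains $\ox$ in its interior, the subdifferential at $\ox$ depends only on the function near $\ox$; and $f$ itself is nearly convex.

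\emph{Setup.} By the definition of continuity, $\ox\in{\rm int}(\dom f_i)$ for every $i$, so $\bigcap_{i=1}^m\ri(\dom f_i)\neq\emptyset$. Proposition~\ref{NCMf} then gives that $f$ is nearly convex. Clearly $\dom f=\bigcap_i\dom f_i$, so $\ox\in{\rm int}(\dom f)$. Choose $r>0$ with $U:=B(\ox,r)\subset\bigcap_i{\rm int}(\dom f_i)$, on which all $f_i$ are finite and continuous at $\ox$.

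\emph{Local convexity.} Let $g$ be any nearly convex function. By Proposition~\ref{riepi},
\[
\ri(\epi g)=\{(x,\lambda)\mid x\in\ri(\dom g),\ \lambda>g(x)\},
\]
and by~\cite[Lemma~2.7]{bmw2013} this set is convex. Hence $g$ restricted to the convex set $\ri(\dom g)$ is convex: its strict epigraph is convex, and the usual convexity inequality follows by letting $\lambda\downarrow g(x)$. Applying this to each $f_i$ and to $f$ shows they are convex on $U$, and on $U$ we have $f=\max_i f_i$.

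\emph{Local-to-global step (main obstacle).} Let $g$ be nearly convex with $\ox\in{\rm int}(\dom g)$. I will show that if $x^*$ satisfies
\[
\langle x^*,x-\ox\rangle\le g(x)-g(\ox)\quad\text{for all }x\in U,
\]
then $x^*\in\partial g(\ox)$.
\begin{itemize}
\item For $x\in\ri(\dom g)$: Proposition~\ref{Connection} gives $[\ox,x]\subset\ri(\dom g)$, and $g$ is convex along this segment. Take $t>0$ small so that $x_t=\ox+t(x-\ox)\in U$. Then
\[
t\langle x^*,x-\ox\rangle\le g(x_t)-g(\ox)\le t\bigl(g(x)-g(\ox)\bigr),
\]
and dividing by $t$ gives the inequality at $x$.
\item For $x\in\dom g\setminus\ri(\dom g)$ and $\lambda\ge g(x)$: since $\epi g$ is nearly convex, $(x,\lambda)\in\overline{\epi g}=\overline{\ri(\epi g)}$. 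So there are $(x_k,\lambda_k)\in\ri(\epi g)$ converging to $(x,\lambda)$ with $x_k\in\ri(\dom g)$ and $\lambda_k>g(x_k)\ge g(\ox)+\langle x^*,x_k-\ox\rangle$. Letting $k\to\infty$ and then $\lambda\downarrow g(x)$ gives the inequality at $x$.
\item For $x\notin\dom g$ the inequality is trivial.
\end{itemize}
The converse direction (a global subgradient is a local one) is immediate. Hence $\partial g(\ox)=\partial\tilde g(\ox)$, where $\tilde g$ equals $g$ on $U$ and $+\infty$ outside. The function $\tilde g$ is a proper convex function continuous at $\ox$, and $\partial\tilde g(\ox)$ is its convex-analysis subdifferential.

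\emph{Conclusion.} Apply this identification to $f$ and to each $f_i$. The truncations $\tilde f_i$ are convex and continuous at $\ox$, and $\tilde f=\max_i\tilde f_i$. The classical max-rule (e.g., Ioffe--Tikhomirov, or Rockafellar) gives
\[
\partial\tilde f(\ox)=\co\bigcup_{i\in I(\ox)}\partial\tilde f_i(\ox).
\]
The active index set $I(\ox)$ is the same for the truncated and original functions. Translating back through $\partial\tilde f=\partial f$ and $\partial\tilde f_i=\partial f_i$ yields the stated formula.
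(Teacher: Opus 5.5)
Your proof is correct, and it takes a genuinely different route from the paper. The paper does not prove this statement; it imports it from \cite[Corollary~5.10]{nty1}. There it belongs to the generalized-differentiation calculus for nearly convex functions developed in that work, whose Section~5 starts from the normal-cone intersection rule recalled here as Proposition~\ref{NCI}. So it is presumably obtained by working with normal cones to $\epi f=\bigcap_i\epi f_i$.

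You localize instead, in three steps:
\begin{itemize}
\item From Proposition~\ref{riepi} and the convexity of $\ri(\epi g)$, a nearly convex $g$ is convex on $\ri(\dom g)$.
\item Using Proposition~\ref{Connection} and $\overline{\epi g}=\overline{\ri(\epi g)}$, a subgradient inequality that holds on a ball around a point of ${\rm int}(\dom g)$ already holds globally. Hence $\partial g(\ox)=\partial\tilde g(\ox)$ for the convex truncation $\tilde g$.
\item The classical max-rule for convex functions continuous at $\ox$ finishes the proof. The active index set is unchanged by truncation, and $\tilde f=\max_i\tilde f_i$.
\end{itemize}
Each step checks out. You should state explicitly that the local-to-global lemma is applied only to proper $g$, so that $g(\ox)\in\R$ and $g>-\infty$ on $\ri(\dom g)$, which makes the division by $t$ legitimate. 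This holds for $f$ and for every $f_i$.

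Your route gives an elementary, self-contained proof that uses only tools already listed in the paper plus classical convex analysis. It also isolates a reusable principle: at points of ${\rm int}(\dom g)$, a proper nearly convex function has the same subdifferential as a genuinely convex function, so any convex subdifferential rule transfers verbatim. It further shows that the continuity hypothesis is equivalent to $\ox\in\bigcap_i{\rm int}(\dom f_i)$. Indeed, each $f_i$ is convex and finite on the open set ${\rm int}(\dom f_i)$, hence automatically continuous there. The calculus approach of \cite{nty1} is less transparent for this particular statement. Its advantage is that it stays global and fits a framework handling qualification conditions for which no such localization to a full-dimensional ball is available.
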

 
 \subsection{Nearly convex optimization problems and some auxiliary facts}
  
Let $f\colon \R^n\to \oR$ be a proper function and let $D\subset\R^n$ be a nonempty set. Consider the optimization problem
 \begin{equation}\label{optim-1}\min\{f(x)\mid x\in D\}.
 \end{equation}

 \begin{definition}\label{nc_prob1} {\rm   If $f$ is a nearly convex function and $D$ is a nearly convex set, then we call~\eqref{optim-1} a \textit{nearly convex optimization problem}  and denote its solution set by~$\mathcal{S}$.}
 \end{definition}

 It is well known that the solution set of a convex optimization problem, that is, the problem of minimizing a convex function over a convex set, is a convex set.  In view of these facts, the following question is worth considering.

 \textbf{Question 1.} \textit{Is the solution set of a nearly convex optimization problem a nearly convex set? If this is not true in general, under what conditions is the solution set of a nearly convex optimization problem nearly convex?}

The next examples answer Question~1 in the negative. In the first example, the objective function is linear and the constraint set is bounded.  In the second one, the objective function is convex linear-quadratic and the constraint set is unbounded.

\begin{example}\label{eg:3.1}
	{\rm Consider the nearly convex optimization problem in the form~\eqref{optim-1} where $f:\R^2\to \R$ is defined by $f(x_1,x_2)=x_1$ for $(x_1,x_2)\in\R^2$ and $$D=\big([0,1]\times [0,1]\big)\setminus\left(\{0\}\times \left[\frac{1}{4},\frac{3}{4}\right]\right).$$ Here, the optimal value of~\eqref{optim-1} is $0$. The value is attained at $(x_1,x_2)\in D$ when $x_1=0$. Thus, the solution set is
		$\mathcal S=\{0\}\times \left(\big[0,\dfrac{1}{4}\big)\cup \big(\frac{3}{4},1\big]\right).$
		Since $${\rm ri}\,\mathcal S=\{0\}\times \left(\big(0,\dfrac{1}{4}\big)\cup \big(\frac{3}{4},1\big)\right)$$ is nonconvex, the set $\mathcal S$ is not nearly convex. 			
	}
\end{example}
 	
 \begin{example}\label{eg:3.2}
 		{\rm Consider the problem of the type~\eqref{optim-1} with $f:\R^2\to \R$ being defined by $f(x_1,x_2)=x_1^2+x_1$ for $(x_1,x_2)\in\R^2$ and $$D=\big(\R_+\times \R\big)\setminus\big(\{0\}\times (-1,1)\big).$$ Clearly, this problem is nearly convex. Note that the optimal value is $0$ and the solution set is
 			$\mathcal S=\{0\}\times\big( (-\infty,-1]\cup [1,+\infty)\big).$
 			Note that $\mathcal S$ is not nearly convex, because	${\rm ri}\,\mathcal S=\{0\}\times \big((-\infty,-1)\cup (1,+\infty)\big)$	is a nonconvex set.
 		}
 \end{example}

The following question remains open.
  
\textbf{Question 2.} \textit{Under what conditions is the solution set of a nearly convex optimization problem nearly convex?}

\smallskip
Let $f\colon \R^n\to \oR$ be a function and $\bar{f}:\R^n\to \oR$ be defined by setting
\begin{equation}\label{bar_f}
	\bar{f}(x)=\inf\big\{\alpha\in \R \mid (x, \alpha)\in \overline{\epi f}\,\big\}
\end{equation}
for every $x\in \R^n$. One has $\bar f(x)\leq f(x)$ for every $x\in \R^n$ and $\dom f\subset \dom \bar f$. Indeed, take any $x\in\R^n$. If $f(x)=+\infty$, then the inequality is obvious. If $f(x)=-\infty$, for any $\alpha\in\R$, it holds that $(x,\alpha)\in \epi f$; hence~\eqref{bar_f} implies that $\bar f(x)=-\infty$. If $f(x)\in\R$, then $(x,f(x))\in \epi f$. So, by~\eqref{bar_f},~$\bar f(x)\leq f(x)$. We have thus shown that not only $\bar f(x)\leq f(x)$ for every $x\in \R^n$, but also $\dom f\subset \dom \bar f$. Note that the strict inequality $\bar f(x)<f(x)$ may hold for some $x\in\dom f$.
 
\begin{lemma}\label{lem:epi_closure}
For any function $f\colon \R^n\to \oR$, if $(x,\alpha)\in \overline{\epi f}$ and $\beta>\alpha$, then one has $(x,\beta)\in \overline{\epi f}$.
\end{lemma}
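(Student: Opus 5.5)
The plan is to work directly with sequences, using only the definition of the closure and the upward-closedness of the epigraph itself. The key observation is that $\epi f$ is closed under increasing the second coordinate: if $(x,\lambda)\in\epi f$ and $\mu\geq\lambda$, then $f(x)\leq\lambda\leq\mu$, so $(x,\mu)\in\epi f$. The lemma asserts that this monotonicity survives passing to the closure.

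Let $(x,\alpha)\in\overline{\epi f}$ and $\beta>\alpha$. By the definition of closure in $\R^n\times\R$, there is a sequence $\{(x_k,\alpha_k)\}\subset\epi f$ with $x_k\to x$ and $\alpha_k\to\alpha$. Put $\delta=\beta-\alpha>0$ and define $\beta_k=\alpha_k+\delta$. Then $\beta_k\geq\alpha_k\geq f(x_k)$, so $(x_k,\beta_k)\in\epi f$ for every $k\in\N$. Moreover, $(x_k,\beta_k)\to(x,\alpha+\delta)=(x,\beta)$, hence $(x,\beta)\in\overline{\epi f}$.

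Alternatively, one could set $\beta_k=\max\{\alpha_k,\beta\}$, which also converges to $\beta$ because $\alpha_k\to\alpha<\beta$. The translation argument is cleaner and needs no case analysis.

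There is no real obstacle here. The only point needing care is that $f$ is allowed to take the values $\pm\infty$. This causes no trouble: the relation $f(x_k)\leq\alpha_k$ with $\alpha_k\in\R$ already excludes $f(x_k)=+\infty$, while $f(x_k)=-\infty$ is harmless for the inequality $f(x_k)\leq\beta_k$. Hence no properness assumption on $f$ is required, consistent with the lemma being stated for an arbitrary function $f\colon\R^n\to\oR$.
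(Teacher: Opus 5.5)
Your proof is correct and follows essentially the same route as the paper: approximate $(x,\alpha)$ by points $(x_k,\alpha_k)\in\epi f$, use that $\epi f$ is closed under raising the second coordinate, and pass to the limit. The only cosmetic difference is that the paper uses the constant second coordinate $\beta$ once $\alpha_k<\beta$ for all $k\geq k_0$, whereas you translate each $\alpha_k$ by $\beta-\alpha$; your alternative with $\max\{\alpha_k,\beta\}$ is effectively the paper's version.
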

\begin{proof}
Since $(x,\alpha)\in \overline{\epi f}$, there exists a sequence $(x_k,\alpha_k)\in\epi f$ such that
$$(x_k,\alpha_k)\to (x,\alpha)\quad\text{as } k\to\infty.$$
As $\alpha<\beta$, we can find an index $k_0\in\mathbb N$ such that $\alpha_k<\beta$ for all $k\geq k_0$. Then, for every $k\geq k_0$, the inclusion $(x_k,\alpha_k)\in\epi f$ implies that  $(x_k,\beta)\in \epi f$. Letting $k\to\infty$, from the last inclusion we get $(x,\beta)\in \overline{\epi f}$.
$\hfill\Box$ 
\end{proof}

According to Rockafellar~\cite[p.~52]{r}, for any function $f\colon \R^n\to \oR$, there exists a greatest lower semicontinuous function (not necessarily finite) majorized by $f$, that is the function whose epigraph is the topological closure of $\epi f$. This function is called the \textit{lower semicontinuous hull} of $f$.

For the sake of clarity, the fact that the function $\bar f$ defined by~\eqref{bar_f} is the lower semicontinuous hull of $f$ is proved in detail in the following lemma.

 \begin{lemma}\label{lem:bar_f}
 	For any function $f\colon \R^n\to \oR$, one has 
 	\begin{equation}\label{epi_bar_f}
 		\epi\bar f= \overline{\epi f},
 	\end{equation}
 	where the function $\bar f$ is defined by~\eqref{bar_f}. Thus, $\bar f$ is lower semicontinuous hull of $f$. If, in addition, $f$  is proper and nearly convex, then $\bar{f}$ is proper and convex.
 \end{lemma}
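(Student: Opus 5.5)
The plan is to first prove the set equality $\epi\bar f=\overline{\epi f}$ by double inclusion, and then read off the remaining claims from it.

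For the inclusion $\overline{\epi f}\subset \epi\bar f$: if $(x,\alpha)\in\overline{\epi f}$, then $\alpha$ belongs to the set whose infimum defines $\bar f(x)$ in~\eqref{bar_f}. Hence $\bar f(x)\le\alpha$, that is, $(x,\alpha)\in\epi\bar f$.

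For the reverse inclusion, take $(x,\alpha)\in\epi\bar f$, so $\bar f(x)\le\alpha$ with $\alpha\in\R$.
\begin{itemize}
\item If $\bar f(x)<\alpha$, the definition of infimum gives some $\beta<\alpha$ with $(x,\beta)\in\overline{\epi f}$. Lemma~\ref{lem:epi_closure} then yields $(x,\alpha)\in\overline{\epi f}$.
\item If $\bar f(x)=\alpha$, then for each $k$ the same argument gives $(x,\alpha+1/k)\in\overline{\epi f}$. Letting $k\to\infty$ and using closedness, $(x,\alpha)\in\overline{\epi f}$.
\end{itemize}
This proves~\eqref{epi_bar_f}.

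Since $\epi\bar f$ is closed, $\bar f$ is lower semicontinuous. Next, let $g\le f$ be any lower semicontinuous function. Then $\epi f\subset\epi g$, and $\epi g$ is closed, so $\overline{\epi f}\subset \epi g$. By~\eqref{epi_bar_f} this means $\epi\bar f\subset\epi g$, i.e., $g\le\bar f$. Together with $\bar f\le f$ (shown before the lemma), this identifies $\bar f$ as the lower semicontinuous hull of $f$.

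Now assume $f$ is proper and nearly convex.
\begin{itemize}
\item \textbf{Convexity.} By the facts quoted after Definition~\ref{def_Minty} (from \cite[Lemma~2.7]{bmw2013}), the closure of a nearly convex set is convex. Hence $\epi\bar f=\overline{\epi f}$ is convex, so $\bar f$ is a convex function.
\item \textbf{Nonempty domain.} We have $\dom\bar f\supset\dom f\neq\emptyset$.
\item \textbf{No value $-\infty$.} This is the main obstacle, and I would use the relative interior. Since $f$ is proper, $\epi f\neq\emptyset$. Also ${\rm ri}(\epi f)={\rm ri}(\overline{\epi f})={\rm ri}(\epi\bar f)$, which is nonempty because $\epi\bar f$ is a nonempty convex set. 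Proposition~\ref{riepi} applied to $f$ gives a point $x_0\in{\rm ri}(\dom f)$ with $f(x_0)$ finite; note that $(x_0,\lambda)\in{\rm ri}(\epi f)$ forces $\lambda>f(x_0)$, so $f(x_0)<\infty$, and properness gives $f(x_0)>-\infty$.
\end{itemize}

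To exclude $-\infty$, suppose $\bar f(x_1)=-\infty$ for some $x_1$.
\begin{itemize}
\item First, $\bar f(x_0)$ is finite. Indeed, $(x_0,\lambda)\in{\rm ri}(\epi\bar f)$ for $\lambda>f(x_0)$. If $\bar f(x_0)=-\infty$, the whole vertical line $\{x_0\}\times\R$ would lie in $\epi\bar f$. Then ${\rm ri}(\epi\bar f)$ would contain $(x_0,\lambda)$ for every $\lambda\in\R$. By Proposition~\ref{riepi} applied to $f$, this would force $f(x_0)<\lambda$ for all real $\lambda$, contradicting the finiteness of $f(x_0)$.
\item Second, apply the standard convex-analysis fact that a convex function taking the value $-\infty$ at some point equals $-\infty$ on the relative interior of its domain (Rockafellar~\cite[Theorem~7.2]{r}). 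Here $x_0\in{\rm ri}(\dom f)$, and I expect ${\rm ri}(\dom f)={\rm ri}(\dom\bar f)$, since $\dom\bar f$ is the projection of $\overline{\epi f}$ and Proposition~\ref{T1}(a) applies. Hence $\bar f(x_0)=-\infty$, a contradiction with the first step.
\end{itemize}
Therefore $\bar f$ is proper.

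Checking the identification ${\rm ri}(\dom f)={\rm ri}(\dom\bar f)$ through projections is the delicate step. The obstacle is that $\dom\bar f$ is the projection of the closure, not the closure of the projection. To handle it, I would use Proposition~\ref{T1}(a): ${\rm ri}$ of the projection of $\overline{\epi f}$ equals the projection of ${\rm ri}(\overline{\epi f})={\rm ri}(\epi f)$, and that projection is ${\rm ri}(\dom f)$.
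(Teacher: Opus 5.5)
Your proof is correct. The set equality and the lower-semicontinuity part match the paper; you additionally verify that $\bar f$ majorizes every lower semicontinuous minorant of $f$, which the paper takes from Rockafellar. Where you differ is in excluding the value $-\infty$.

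The paper does this in one step. If $\bar f(y)=-\infty$, then $(y,\alpha)\in\overline{\epi f}$ for all $\alpha\in\R$. Joining these points to a fixed $(\bar x,\bar\lambda)\in{\rm ri}(\epi f)$ via Proposition~\ref{Connection}, and reading off Proposition~\ref{riepi}, gives $f\big((1-t)\bar x+ty\big)<(1-t)\bar\lambda+t\alpha$ for every $\alpha$. So $f$ equals $-\infty$ somewhere, contradicting properness.

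You instead use the vertical-line idea only at a fixed $x_0\in{\rm ri}(\dom f)$, to show $\bar f(x_0)\in\R$. You then transport the value $-\infty$ from $x_1$ to $x_0$ using \cite[Theorem~7.2]{r} together with ${\rm ri}(\dom f)={\rm ri}(\dom\bar f)$. You derive that identity correctly from Proposition~\ref{T1}(a), applied to $\epi f$ and to $\overline{\epi f}$.

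Two remarks on your route.
\begin{itemize}
\item Your claim that $(x_0,\lambda)\in{\rm ri}(\epi\bar f)$ for \emph{every} $\lambda\in\R$ does not follow just from the line lying in $\epi\bar f$. It needs Proposition~\ref{Connection}: join a relative interior point above, say $(x_0,f(x_0)+1)$, to a point of the line below $(x_0,\lambda)$.
\item Once you use that tool, applying it at $x_1$ rather than $x_0$ is exactly the paper's argument, and your second step becomes unnecessary. Alternatively, $\bar f$ is lower semicontinuous and finite at $x_0$. The standard consequence of Rockafellar's theorem, that a lower semicontinuous improper convex function takes no finite values, then finishes immediately, without the ``delicate'' identification of relative interiors.
\end{itemize}

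What your route buys is an independent proof of ${\rm ri}(\dom f)={\rm ri}(\dom\bar f)$, which the paper obtains only afterwards, in Lemma~\ref{eq_in_ridom}, as a consequence of this lemma. What the paper's route buys is a short, self-contained argument using only the two quoted propositions.
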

 
 \begin{proof} To obtain the equality~\eqref{epi_bar_f}, we can argue as follows. First, take any $(x,\gamma)\in \overline{\epi f}$ and note that $$\gamma\in \left\{\alpha\in \R \mid (x, \alpha)\in \overline{\epi f}\,\right\}.$$ Then, by~\eqref{bar_f} we have $\bar f(x)\le\gamma$. This means that $(x,\gamma)\in\epi\bar f$. So, we can infer that $\overline{\epi f}\subset\epi\bar f$. Now, take any $(x,\gamma)\in\epi\bar f$ and observe that $ \gamma\geq \bar f(x).$ By~\eqref{bar_f}, we can find a sequence $\{\alpha_k\}\subset \R$ with $\alpha_k\downarrow \bar f(x)$ and $(x,\alpha_k)\in\overline{\epi f}$.  If $\gamma>\bar f(x)$, then there exists $k_0\in\N$ such that $\gamma > \alpha_k$ for all $k\geq k_0$. As $(x,\alpha_k)\in\overline{\epi f}$ for all $k\geq k_0$, by Lemma~\ref{lem:epi_closure} we have  $(x,\gamma)\in \overline{\epi f}$. If $\gamma = \bar f(x)$, then $\bar f(x)\in \R$. As $\overline{\epi f}$ is closed, passing the inclusion $(x,\alpha_k)\in \overline{\epi f}$ to the limit as $k\to\infty$ yields $(x,\bar f(x))\in \overline{\epi f}$. Therefore, $(x,\gamma)\in \overline{\epi f}$.  We have proved that $\epi\bar f\subset \overline{\epi f}$. So, the equality~\eqref{epi_bar_f} holds true. 
 	
 	Now, suppose that $f$  is proper and nearly convex. Since $\dom f\subset \dom \bar f$ and $\dom f\neq\emptyset$, we have $\dom \bar f\neq\emptyset$. So, if the function $\bar{f}$ is improper, then there exists $y\in\dom \bar f$ such that $\bar f(y)=-\infty$. Clearly, the equality $$\inf\left\{\alpha\in \R \mid (y, \alpha)\in \overline{\epi f}\,\right\}=-\infty$$
 	implies the existence of a sequence $\{\alpha_k\}\subset\R$ with $\lim\limits_{k\to\infty}\alpha_k=-\infty$ and $(y,\alpha_k)\in\overline{\epi f}$ for all $k\in\mathbb N$. Because $f$ is a nearly convex function, $\epi f$ is a nearly convex set by Definition~\ref{nc_function}. Therefore, according to~\cite[Proposition~2.1]{nty1}, $\overline{\epi f}$ is a closed convex set and ${\rm ri}(\epi f)={\rm ri}(\overline{\epi f})\neq\emptyset$. 
 	Pick any $(\bar x,\bar\lambda)\in{\rm ri}(\epi f)$  and fix a number $t\in (0,1)$. For each $k\in\mathbb N$, 
 	by the inclusion $(y,\alpha_k)\in\overline{\epi f}$ and Proposition~\ref{Connection} we get
 	$$
 	(1-t)(\bar x,\bar\lambda)+t(y,\alpha_k)\in{\rm ri}(\epi f).
 	$$ 	
    Hence, thanks to the equality~\eqref{epi rep1a} in Proposition~\ref{riepi}, we have \begin{equation}\label{eqn_n1}
    	(1-t)\bar x+ty\in{\rm ri}(\dom f)
    \end{equation} and
 	$$
 	(1-t)\bar\lambda+t\alpha_k
 	>
 	f\big((1-t)\bar x+ty\big).
  	$$ Letting $k\to\infty$, from the last inequality we can deduce that 
 	$f\big((1-t)\bar x+ty\big)=-\infty.$ This contradicts the inclusion~\eqref{eqn_n1}. We have thus proved that 
 	$\bar f$ is a proper function.

Since $f$ is nearly convex, the set $\overline{\epi f}$ is closed and convex. So, by~\eqref{epi_bar_f} we can assert that the function~$\bar f$ is lower semicontinuous and convex.
$\hfill\Box$
\end{proof}

The next lemma gives us an alternative formula to effectively compute the value of the function defined by~\eqref{bar_f} at any point in $\mathbb R^n$.

\begin{lemma}\label{lem:bar_f_lsc}
    For any function $f\colon \R^n\to \oR$, one has 
	\begin{equation}\label{bar_f(y)}
	\bar f(y)=\liminf_{x\to y} f(x)
	\end{equation}
	for any point $y\in\mathbb R^n$, where the function $\bar f$ is defined by~\eqref{bar_f}. 
\end{lemma}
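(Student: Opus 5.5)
We prove the two inequalities $\bar f(y)\le \liminf_{x\to y} f(x)$ and $\bar f(y)\ge \liminf_{x\to y} f(x)$ separately, with $\bar f$ defined by~\eqref{bar_f} and using freely the equality $\epi\bar f=\overline{\epi f}$ from Lemma~\ref{lem:bar_f}. We interpret $\liminf_{x\to y} f(x)$ as $\lim_{\delta\downarrow 0}\inf\{f(x)\mid x\in B(y,\delta)\}$, so that the point $y$ itself is included; this convention is what makes the formula hold, since $\bar f\le f$.

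\emph{Step 1: $\bar f(y)\le \ell:=\liminf_{x\to y}f(x)$.} If $\ell=+\infty$ there is nothing to prove. Otherwise take any real $\beta>\ell$. By definition of the $\liminf$, for every $k\in\N$ there is $x_k\in B(y,1/k)$ with $f(x_k)<\beta$. Then $(x_k,\beta)\in\epi f$, and letting $k\to\infty$ gives $(y,\beta)\in\overline{\epi f}$. By~\eqref{bar_f} this yields $\bar f(y)\le\beta$. Since $\beta>\ell$ was arbitrary, $\bar f(y)\le\ell$; the case $\ell=-\infty$ is covered by letting $\beta\to-\infty$.

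\emph{Step 2: $\bar f(y)\ge\ell$.} If $\bar f(y)=+\infty$, the inequality is trivial. Otherwise take any real $\alpha>\bar f(y)$. By~\eqref{bar_f}, together with Lemma~\ref{lem:epi_closure}, we have $(y,\alpha)\in\overline{\epi f}$. Hence there exist $(x_k,\alpha_k)\in\epi f$ with $x_k\to y$ and $\alpha_k\to\alpha$. Then $f(x_k)\le\alpha_k$, so for each $\delta>0$ we get $\inf_{B(y,\delta)}f\le\alpha_k$ for all large $k$, which gives $\inf_{B(y,\delta)}f\le\alpha$. Letting $\delta\downarrow0$ yields $\ell\le\alpha$. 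Since $\alpha>\bar f(y)$ was arbitrary, $\ell\le\bar f(y)$, including the case $\bar f(y)=-\infty$.

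The main point requiring care is bookkeeping with extended values ($\pm\infty$) and the convention for the $\liminf$ (including $y$ itself). Beyond that, the argument is a direct sequential translation of membership in $\overline{\epi f}$, with Lemma~\ref{lem:epi_closure} used to pass from an infimum to actual membership.
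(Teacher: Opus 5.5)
Your proof is correct, and your convention that the $\liminf$ includes the point $y$ itself is the right one. It is implicit in the paper's sequential argument, and the formula fails for punctured neighbourhoods.

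Your Step~2 is essentially the paper's first half: approximate a point $(y,\alpha)\in\overline{\epi f}$ by points of $\epi f$ and read off a bound on $\inf_{B(y,\delta)}f$. Fixing a real $\alpha>\bar f(y)$ instead of a sequence $\alpha_k\to\bar f(y)$ also isolates the trivial case $\bar f(y)=+\infty$. In that case the paper's sequence $\{\alpha_k\}$ would not exist.

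The genuine difference is in Step~1. The paper takes $x_k\to y$ with $f(x_k)\to\gamma$ and distinguishes two cases: the values $f(x_k)$ are eventually finite, or they equal $-\infty$ along a subsequence. In the finite case it combines $\bar f\le f$ with the lower semicontinuity of $\bar f$, i.e.\ with the closedness of $\epi\bar f=\overline{\epi f}$ from Lemma~\ref{lem:bar_f}. You instead fix a real level $\beta>\ell$, choose $x_k\to y$ with $f(x_k)<\beta$, and read off $(y,\beta)\in\overline{\epi f}$ directly from $(x_k,\beta)\in\epi f$. With your route, values $-\infty$ need no separate treatment, no case split is needed, and lower semicontinuity of $\bar f$ is never used.

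So although you announce that you use Lemma~\ref{lem:bar_f}, your argument does not actually need it. Lemma~\ref{lem:epi_closure} could also be dropped, by approximating some $(y,\alpha')\in\overline{\epi f}$ with $\alpha'<\alpha$. Your proof therefore works directly from definition~\eqref{bar_f}. The paper's route instead reuses the lower semicontinuity of $\bar f$ already obtained in Lemma~\ref{lem:bar_f}, at the price of the case distinction.
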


\begin{proof} Given any $y\in\mathbb R^n$, we put $\gamma=\liminf\limits_{x\to y} f(x)$.  Since
	\begin{equation*}
		\bar{f}(y)=\inf\left\{\alpha\in \R \mid (y, \alpha)\in \overline{\epi f}\,\right\},
	\end{equation*}  there exists a sequence $\{\alpha_k\}\subset\R$ such that  $\lim\limits_{k\to\infty}\alpha_k=\bar{f}(y)$  and $(y, \alpha_k)\in \overline{\epi f}$ for all $k\in\N$. For each $k\in\mathbb N$, select a point $(y_k,\beta_k)\in\epi f$ such that  $\|y_k-y\|\leq\dfrac{1}{k}$ and $|\beta_k-\alpha_k|\leq \dfrac{1}{k}$.
	Since $\beta_k\geq f(y_k)$ for all $k$, we have 
		\begin{equation*}
			\lim\limits_{k\to\infty}\alpha_k=\lim\limits_{k\to\infty}\beta_k=\liminf\limits_{k\to\infty}\beta_k\geq \liminf\limits_{k\to\infty}f(y_k)\geq \liminf\limits_{x\to y} f(x).
	\end{equation*} It follows that $\bar f(y)\geq\gamma$. To obtain the reverse inequality, we first observe that if $\gamma=+\infty$, then  $\bar f(y)\leq\gamma$. Now, suppose that $\gamma<+\infty$. Let $\{x_k\}$ be a sequence in~$\mathbb R^n$ tending to $y$ such that $\lim\limits_{k\to\infty}f(x_k)=\gamma$. Since $\gamma<+\infty$, either there exists an index $k_0$ such that $f(x_k)\in\mathbb R$ for all $k\geq k_0$ or there is a subsequence $\{x_{k_j}\}$ of $\{x_k\}$ with $f(x_{k_j})=-\infty$ for all $j\in\mathbb N$. 
	
	In the first case, for each $k\geq k_0$, as $(x_k,f(x_k))\in\epi f$, by~\eqref{bar_f} we see that $\bar f(x_k)\leq f(x_k)$. This implies that 
	$$\liminf\limits_{k\to\infty}\bar f(x_k)\leq \liminf\limits_{k\to\infty}f(x_k)=\gamma.$$ Meanwhile, by the lower semicontinuity of $\bar f$, $\liminf\limits_{k\to\infty}\bar f(x_k)\geq \bar f(y)$. So, we have $\bar f(y)\leq\gamma$. 
	
	In the second case, for any $\beta\in\mathbb R$, since $(x_{k_j},\beta) \in\epi f$ for all $j\in\mathbb N$, from the equality $\lim\limits_{j\to\infty} (x_{k_j},\beta)=(y,\beta)$ we get $(y,\beta)\in \overline{\epi f}$. As this inclusion is fulfilled for any $\beta\in\mathbb R$, from~\eqref{bar_f} it follows that $\bar f(y)=-\infty$. Therefore, $\bar f(y)\leq\gamma$.
	
	Summing up all the above, we can infer that the equality~\eqref{bar_f(y)} is valid.
	$\hfill\Box$
\end{proof}

Since the notion of almost convex function in~\cite[Definition~3(i)]{bgw2007} is equivalent to the notion of nearly convex function in Definition~\ref{nc_function}, the forthcoming lemma restates one assertion of an important result in~\cite[Theorem~1]{bgw2007}. For the completeness of the present paper, we give a new and detailed proof for this lemma.

\begin{lemma}\label{eq_in_ridom} Suppose that $f\colon \R^n\to \oR$ is a proper nearly convex function and $\bar f$ is defined by~\eqref{bar_f}. Then,  $\bar f$ is a convex function, ${\rm ri} (\dom \bar f)={\rm ri}(\dom f)$ and $\bar f(x)=f(x)$ for every $x\in {\rm ri}(\dom f)$. 
\end{lemma}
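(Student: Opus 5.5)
Convexity of $\bar f$ is already contained in Lemma~\ref{lem:bar_f}: since $f$ is proper and nearly convex, $\epi\bar f=\overline{\epi f}$ is closed and convex, so $\bar f$ is proper, lower semicontinuous and convex. It remains to prove ${\rm ri}(\dom\bar f)={\rm ri}(\dom f)$ and $\bar f=f$ on ${\rm ri}(\dom f)$.

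For the domains, I will use the projection $A\colon\R^n\times\R\to\R^n$, $A(x,\lambda)=x$. Then $A(\epi f)=\dom f$ and $A(\epi\bar f)=\dom\bar f$. The set $\epi f$ is nearly convex, so Proposition~\ref{T1}(a) gives ${\rm ri}(\dom f)=A({\rm ri}(\epi f))$. The set $\epi\bar f=\overline{\epi f}$ is convex, hence nearly convex, so likewise ${\rm ri}(\dom\bar f)=A({\rm ri}(\overline{\epi f}))$. By \cite[Proposition~2.1]{nty1} (recalled after Definition~\ref{def_Minty}), ${\rm ri}(\epi f)={\rm ri}(\overline{\epi f})$. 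Therefore ${\rm ri}(\dom\bar f)={\rm ri}(\dom f)$.

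For the equality of values, fix $x\in{\rm ri}(\dom f)$. We already have $\bar f(x)\le f(x)$, so we argue by contradiction and suppose $\bar f(x)<f(x)$. Properness of $\bar f$ gives $\bar f(x)>-\infty$, and $x\in\dom f$ gives $f(x)<\infty$, so both values are finite. Choose $\lambda$ with $\bar f(x)<\lambda<f(x)$. Since $x\in{\rm ri}(\dom\bar f)$ by the first part and $\bar f$ is convex, hence nearly convex, the representation \eqref{epi rep1a} of Proposition~\ref{riepi} applied to $\bar f$ gives $(x,\lambda)\in{\rm ri}(\epi\bar f)$. Now
\[
{\rm ri}(\epi\bar f)={\rm ri}(\overline{\epi f})={\rm ri}(\epi f).
\]
Applying \eqref{epi rep1a} to $f$ then yields $\lambda>f(x)$, which contradicts the choice of $\lambda$. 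Hence $\bar f(x)=f(x)$.

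The only delicate point I expect is checking that the cited results apply: Proposition~\ref{riepi} must be used for $\bar f$, which requires $\bar f$ to be nearly convex (true, since it is convex), and Proposition~\ref{T1}(a) must be used for the closed convex set $\overline{\epi f}$.
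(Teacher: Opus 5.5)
Your proof is correct and follows the paper's argument. Both rest on ${\rm ri}(\epi f)={\rm ri}(\overline{\epi f})={\rm ri}(\epi\bar f)$, combined with the representation \eqref{epi rep1a} applied to $f$ and to $\bar f$. The only difference is cosmetic: you get ${\rm ri}(\dom\bar f)={\rm ri}(\dom f)$ by projecting with Proposition~\ref{T1}(a), whereas the paper reads it off directly from the equality of the two representations of ${\rm ri}(\epi f)$.
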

\begin{proof}  By Lemma~\ref{lem:bar_f}, $\epi\bar f= \overline{\epi f}$. So, as $\epi f$ is a nearly convex set,  $\epi\bar f$ is a convex function (hence $\bar f$ is a convex function) and we have
	$${\rm ri}(\epi f)= {\rm ri}(\overline{\epi f})= {\rm ri}(\epi \bar f).$$
Therefore, applying the second assertion of Proposition~\ref{riepi} to both functions~$f$ and~$\bar f$ we have 
\begin{eqnarray*}\label{ri_epi1}
	{\rm ri}(\epi f) = \left\{(x,\lambda)\in \R^n\times \R \mid x\in {\rm ri}(\dom f),\ \lambda>f(x)\right\}
	\end{eqnarray*} and
	\begin{eqnarray*}\label{ri_epi}
	{\rm ri}(\epi \bar f) =  \left\{(x,\eta)\in \R^n\times \R \mid x\in {\rm ri}(\dom \bar f),\ \eta>\bar f(x)\right\}.
	\end{eqnarray*} 
	Combining this with the equality ${\rm ri}(\epi f)={\rm ri}(\epi \bar f)$ yields 
	 \begin{eqnarray}\label{ri_epis}\begin{array}{rl}
	 	& \big\{(x,\lambda)\in \R^n\times \R \mid x\in {\rm ri}(\dom f),\ \lambda>f(x)\big\}\\
	 	& =\big\{(x,\eta)\in \R^n\times \R \mid x\in {\rm ri}(\dom \bar f),\ \eta>\bar f(x)\big\}.
	 	\end{array}
	 \end{eqnarray} It follows that
	\begin{equation}\label{eq_ri_doms}
		{\rm ri}(\dom f)={\rm ri} (\dom \bar f).
	\end{equation}
 Moreover, if there exists some point $x\in {\rm ri}(\dom f)$ with $\bar f(x)\neq f(x)$, then one gets $f(x)>\bar f(x)$ and it is clear that~\eqref{ri_epis} cannot hold. Consequently, we must have $\bar f(x)=f(x)$ for every $x\in {\rm ri}(\dom f)$. 
$\hfill\Box$
\end{proof}

\subsection{The associated convex problem and its properties}

\begin{definition}
	{\rm If~\eqref{optim-1} is a nearly convex optimization problem, then the optimization problem 
	 \begin{equation}\label{optim-2}\min\{\bar{f}(x)\mid x\in \bar{D}\}
 \end{equation}
	where $\bar f$ is defined by~\eqref{bar_f}, is said to be its \textit{associated convex problem}. The solution set of~\eqref{optim-2} is denoted by~$\mathcal{S}_1$.
	}
\end{definition}

 \begin{theorem}\label{opt_values}
 If~\eqref{optim-1} is a nearly convex optimization problem, then its optimal value is equal to that of the associated convex problem~\eqref{optim-2}, provided that the regularity condition
 \begin{equation}\label{reg1}
 	(\ri D)\cap \big({\rm ri}(\dom f)\big)\neq \emptyset
 \end{equation} is satisfied.
 \end{theorem}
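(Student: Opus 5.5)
Let $\alpha=\inf\{f(x)\mid x\in D\}$ and $\beta=\inf\{\bar f(x)\mid x\in \bar D\}$. The plan is to prove $\beta\le\alpha$ directly and $\alpha\le\beta$ by pushing points of $\bar D$ into $(\ri D)\cap{\rm ri}(\dom f)$ along segments.

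The inequality $\beta\le\alpha$ is immediate. Since $D\subset\bar D$ and $\bar f(x)\le f(x)$ for all $x$, as shown right after~\eqref{bar_f}, we get $\beta\le\alpha$.

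For $\alpha\le\beta$, fix $x_0\in(\ri D)\cap{\rm ri}(\dom f)$, which exists by~\eqref{reg1}. Take any $y\in\bar D$ with $\bar f(y)<+\infty$; if there is none, then $\beta=+\infty$ and nothing needs proving. By Lemma~\ref{lem:bar_f}, $\bar f$ is proper, so $\bar f(y)\in\R$. By Lemma~\ref{eq_in_ridom}, $\bar f$ is convex, ${\rm ri}(\dom\bar f)={\rm ri}(\dom f)$, and $\bar f=f$ on ${\rm ri}(\dom f)$. For $t\in(0,1)$ set $x_t=(1-t)x_0+ty$.
\begin{itemize}
\item By Proposition~\ref{Connection} applied to the nearly convex set $D$ (with $x_0\in\ri D$, $y\in\bar D$), we have $x_t\in\ri D\subset D$.
\item $\dom\bar f$ is convex, $x_0\in{\rm ri}(\dom\bar f)$ and $y\in\dom\bar f$. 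Hence $x_t\in{\rm ri}(\dom\bar f)={\rm ri}(\dom f)$, by the standard fact~\cite[Theorem~6.1]{r}, or by Proposition~\ref{Connection} again.
\item Consequently $f(x_t)=\bar f(x_t)\le(1-t)\bar f(x_0)+t\bar f(y)=(1-t)f(x_0)+t\bar f(y)$.
\end{itemize}
Here $f(x_0)$ is finite because $f$ is proper and $x_0\in\dom f$. Therefore $\alpha\le f(x_t)\le (1-t)f(x_0)+t\bar f(y)$ for every $t\in(0,1)$. Letting $t\uparrow1$ gives $\alpha\le\bar f(y)$, and taking the infimum over $y$ gives $\alpha\le\beta$.

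The delicate point is the convexity estimate. It needs $\bar f$ to be finite at both $x_0$ and $y$, so that no $\infty-\infty$ arises and the limit $t\uparrow1$ is legitimate. This is exactly why I invoke properness of $\bar f$ (Lemma~\ref{lem:bar_f}) and the agreement $\bar f(x_0)=f(x_0)$ (Lemma~\ref{eq_in_ridom}). The case $\beta=-\infty$ is covered automatically, since the argument gives $\alpha\le\bar f(y)$ for arbitrarily negative values $\bar f(y)$. The regularity condition~\eqref{reg1} is used only to supply a single base point $x_0$ lying simultaneously in $\ri D$ and in ${\rm ri}(\dom f)$, so that every point $x_t$ on the segment is feasible for~\eqref{optim-1} and has $f(x_t)=\bar f(x_t)$.
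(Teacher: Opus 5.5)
Your proof is correct. It uses the same ingredients as the paper's argument: the line segment principle from a relative interior point, convexity and properness of $\bar f$, letting $t\uparrow 1$, and the agreement $f=\bar f$ on ${\rm ri}(\dom f)$ from Lemma~\ref{eq_in_ridom}. You organize them more economically, though.

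The paper proceeds in two passes. It first proves Claim~1 (the infimum of $\bar f$ over ${\rm ri}\,D$ equals that over $\bar D$) using a base point that lies only in $({\rm ri}\,D)\cap\dom f$. It then passes to $D_0=({\rm ri}\,D)\cap\dom\bar f$ and identifies ${\rm ri}\,D_0=({\rm ri}\,D)\cap{\rm ri}(\dom f)$ via Proposition~\ref{T1}(b). Finally it reruns the Claim~1 argument on $D_0$ before replacing $\bar f$ by $f$.

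You instead choose $x_0$ in both relative interiors from the outset. You then apply the segment principle separately to $D$ (Proposition~\ref{Connection}) and to the convex set $\dom\bar f$. This makes every $x_t$ simultaneously feasible for the original problem and a point where $f(x_t)=\bar f(x_t)$. As a result, you need neither the intermediate set $D_0$, nor the relative-interior-of-intersection formula, nor a minimizing sequence.

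Your finiteness bookkeeping is also sound. You use properness of $\bar f$, note that $f(x_0)\in\R$, and treat the trivial case $\bar D\cap\dom\bar f=\emptyset$ separately. The case $\beta=-\infty$ is covered by taking the infimum over $y$ at the end.
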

 \begin{proof}
 Let
 	\begin{equation}\label{eq:op_val}
 	v:=\inf\{f(x)\mid x\in D\}\quad \text{and}\quad \bar{v}:=\inf\{\bar{f}(x)\mid x\in \bar{D}\}.
 	\end{equation}
 	Since $D\subset \bar{D}$ and $\bar{f}(x)\leq f(x)$ for all $x\in\R^n$, by~\eqref{eq:op_val} we have $\bar{v}\leq v.$ It remains to prove that $\bar{v}\geq v.$ 
 	
 	As $D$ is a nonempty nearly convex set, $\ri D$ and $\bar{D}$ are nonempty convex sets, and $\ri D=\ri\bar{D}$ (see~\cite[Lemma~2.7]{bmw2013},~\cite[Proposition~2.1]{nty1}, and~\cite[Theorem~6.2]{r}). 
 	
 	\smallskip
 	{\sc Claim 1}. \textit{One has
 	\begin{equation}\label{eq_infs}
 		\inf\{\bar f(x)\mid x\in \ri D\}=\inf\{\bar{f}(x)\mid x\in \bar{D}\}.
 	\end{equation}}
 	
 	\smallskip
 	Indeed, since $\ri D \subset \bar{D}$, it is immediate that
 	\begin{equation}\label{geq_alpha}
 		\inf\{\bar f(x)\mid x\in \ri D\} \ge\bar v.
 	\end{equation}
  Let $\{x_k\}\subset \bar{D}$ be such a sequence that $\lim\limits_{k\to\infty} \bar f(x_k)=\bar v.$ Thanks to the condition~\eqref{reg1}, there is a point $y\in (\ri D)\cap (\dom f)$. For every $t\in[0,1)$, by Proposition~\ref{Connection} one gets
 	$$
 	(1-t)y+t x_k \in \ri D \qquad \forall k\in\mathbb N.
 	$$
  On one hand, by the convexity of $\bar f$,
 	$$
 	\bar f\big((1-t)y+t x_k\big)
 	\le (1-t)\bar f(y)+t\bar f(x_k)
 	\qquad \forall k\in\mathbb N.
 	$$
 Taking $\limsup$ as $k\to\infty$ on both sides of the last inequality yields
 	$$
 	\limsup_{k\to\infty}\bar f\big((1-t)y+t x_k\big)
 	\le (1-t)\bar f(y)+t\bar v.
 	$$
 	On the other hand, since $(1-t)y+t x_k\in \ri D$ for all $k$, it follows that
 	$$
 	\inf\{\bar f(x)\mid x\in \ri D\}
 	\le \bar f\big((1-t)y+t x_k\big)
 	\qquad \forall k\in\mathbb N.
 	$$
 	Consequently,
 	$$
 	\inf\{\bar f(x)\mid x\in \ri D\}
 	\le \limsup_{k\to\infty}\bar f\big((1-t)y+t x_k\big),
 	$$
 	and hence, for every $t\in[0,1)$,
 	\begin{equation}\label{t_bar_v}
 		\inf\{\bar f(x)\mid x\in \ri D\}
 		\le (1-t)\bar f(y)+t\bar v.
 	\end{equation}
 	As $y\in \dom f$ and the function $\bar f$ is proper by Lemma~\ref{lem:bar_f}, it holds that $$-\infty <\bar f (y)\leq f(y)<+\infty.$$ So, letting $t\uparrow 1$, from~\eqref{t_bar_v} we obtain $\inf\{\bar f(x)\mid x\in \ri D\} \le \bar v.
 	$ Combining this with the inequality~\eqref{geq_alpha},
 	we obtain~\eqref{eq_infs}.
 	
 	\smallskip
 	By~\eqref{eq:op_val},~\eqref{eq_infs}, and the properness of $\bar f$, one has 
 	\begin{equation}\label{eqs_1}\begin{array}{rcl}
 			\bar{v} =  \inf\{\bar{f}(x)\mid x\in \bar{D}\}
 			& = & \inf\{\bar f(x)\mid x\in \ri D\}\\
 			& = & \inf\{\bar f(x)\mid x\in (\ri D)\cap (\dom\bar f)\}.
 		\end{array}
  	\end{equation} Now, from~\eqref{eq_ri_doms} and~\eqref{reg1} we can deduce that $$(\ri D)\cap ({\rm ri}(\dom\bar f))=(\ri D)\cap ({\rm ri}(\dom f))\neq\emptyset.$$ Hence, we can apply Proposition~\ref{T1}(b) to the sets $\ri D$ and $\dom\bar f$ to have 
  	\begin{equation}\label{ri_cap}
  		{\rm ri}\left((\ri D)\cap (\dom\bar f)\right)=(\ri D)\cap ({\rm ri}(\dom\bar f))=(\ri D)\cap ({\rm ri}(\dom f)).
  	\end{equation}
 Put $D_0=(\ri D)\cap (\dom\bar f)$ and note that $D_0$ is a convex set, which may be non-closed. By~\eqref{reg1} and~\eqref{ri_cap} we have 
 \begin{equation}\label{ri_cap_D0}
 \big(\ri D_0\big)\cap \big({\rm ri}(\dom f))\neq\emptyset.
 \end{equation} An analysis of the proof of Claim~1 shows that, thanks to~\eqref{ri_cap_D0}, 
 $$\inf\{\bar f(x)\mid x\in \ri D_0\}= \inf\{\bar f(x)\mid x\in D_0\}.$$ From this,~\eqref{eqs_1},~\eqref{ri_cap}, and Lemma~\ref{eq_in_ridom}, we get
 \begin{equation*}\label{eqs_2}\begin{array}{rcl}
 		\bar{v} & = & \inf\{\bar f(x)\mid x\in (\ri D)\cap (\dom\bar f)\}\\
 		& = & \inf\{\bar f(x)\mid x\in {\rm ri}\left((\ri D)\cap (\dom\bar f)\right)\}\\ 		
 			& = & \inf\{\bar f(x)\mid x\in (\ri D)\cap ({\rm ri}(\dom f))\}\\
 			& = & \inf\{f(x)\mid x\in (\ri D)\cap ({\rm ri}(\dom f))\}\\
 			& \geq & \inf\{f(x)\mid x\in D\cap (\dom f)\}\\
 			& = & \inf\{f(x)\mid x\in D\}.
 		\end{array}
 \end{equation*} So, $\bar{v}\geq v$. 
 
 The proof is complete.
 	 $\hfill\Box$
 \end{proof}

The following example shows that the conclusion of Theorem~\ref{opt_values} can be invalid if the regularity condition~\eqref{reg1} is violated.

\begin{example}\label{eg:opt_vals}
Consider  the set $D=\Big([0,1]\times[0,1]\Big)\setminus\left(\{0\}\times\Big(\dfrac{1}{4},\dfrac{2}{3}\Big)\right)$ and the function $f:\R^2\to \overline{\R}$ defined by
	$$f(x)=\begin{cases} \left|x_2-\dfrac{1}{2}\right| \quad&  \mbox{\rm if }\; x=(x_1,x_2)\in (-\R_+)\times \R,\\
		+\infty & \mbox{\rm otherwise;}\end{cases}$$  see Fig.~\ref{fig:eg:opt_vals1} and Fig.~\ref{fig:eg:opt_vals}.
Clearly, $f$ is a proper lower semicontinuous convex function and $D$ is a nearly convex set. Note that $\dom f= (-\R_+)\times \R$, the function $\bar f$ defined by~\eqref{bar_f} coincides with $f$, $\bar{D}= [0,1]\times[0,1]$, and $\ri D = {\rm int}\, D = (0,1)\times (0,1)$. Since $$\ri D\cap{\rm ri}(\dom f)=\big[(0,1)\times (0,1)\big]\cap \big[(-\infty,0)\times \R\big]=\emptyset,$$ condition~\eqref{reg1} fails to hold. It is easily verified that 
	$$\min\big\{f(x)\mid x\in D\big\}= \dfrac{1}{6},$$
	and the minimum is attained at $x=(0,\frac{2}{3})$. It is also clear that
	$$\min\big\{\bar{f}(x)\mid x\in \bar{D}\big\}=0,$$
	and the minimum is attained at $x=(0,\frac{1}{2})$. Thus, the optimal value of~\eqref{optim-1} is strictly larger than that of~\eqref{optim-2}.
\end{example}

\begin{figure}[h]
	\centering
	\includegraphics[width=0.95\textwidth]{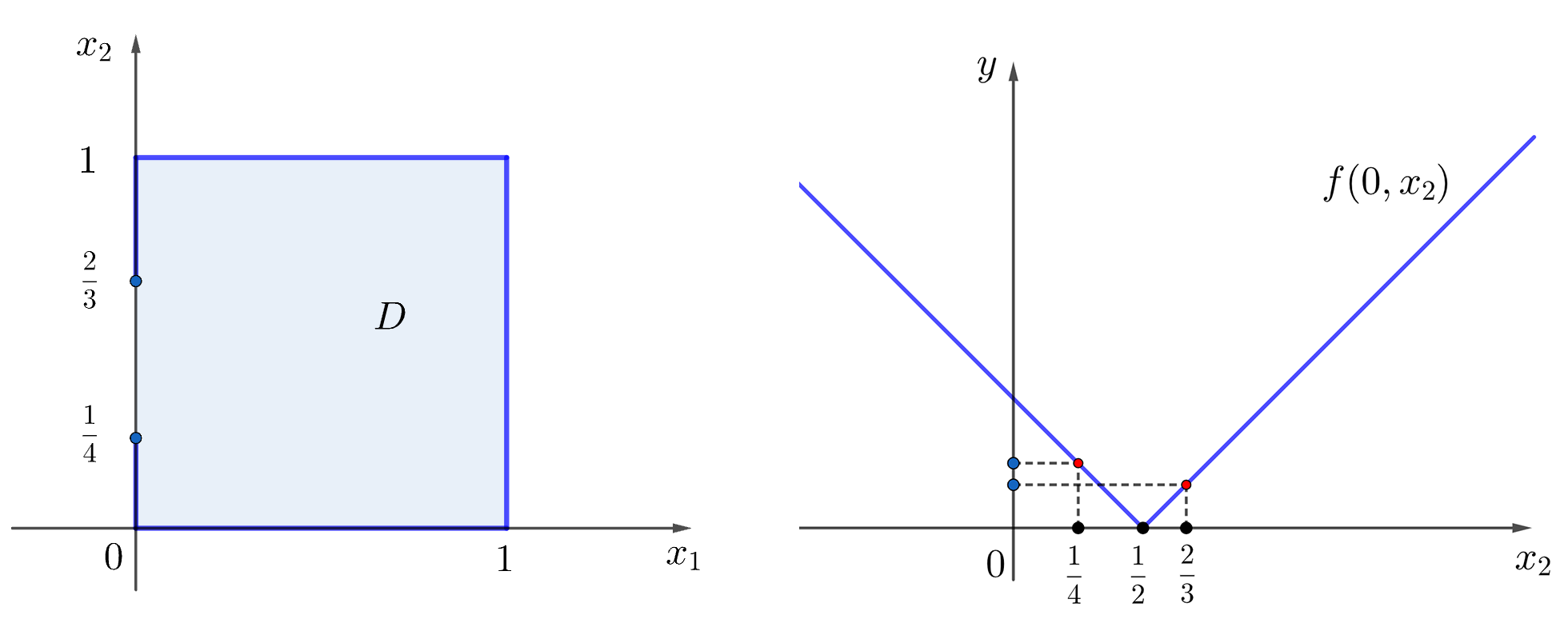}
	\caption{The set $D$ and the graph of the function $f(0,\cdot)$ in Example~\ref{eg:opt_vals}}
	\label{fig:eg:opt_vals1}
\end{figure}

\begin{figure}[h]
	\centering
	\includegraphics[width=0.7\textwidth]{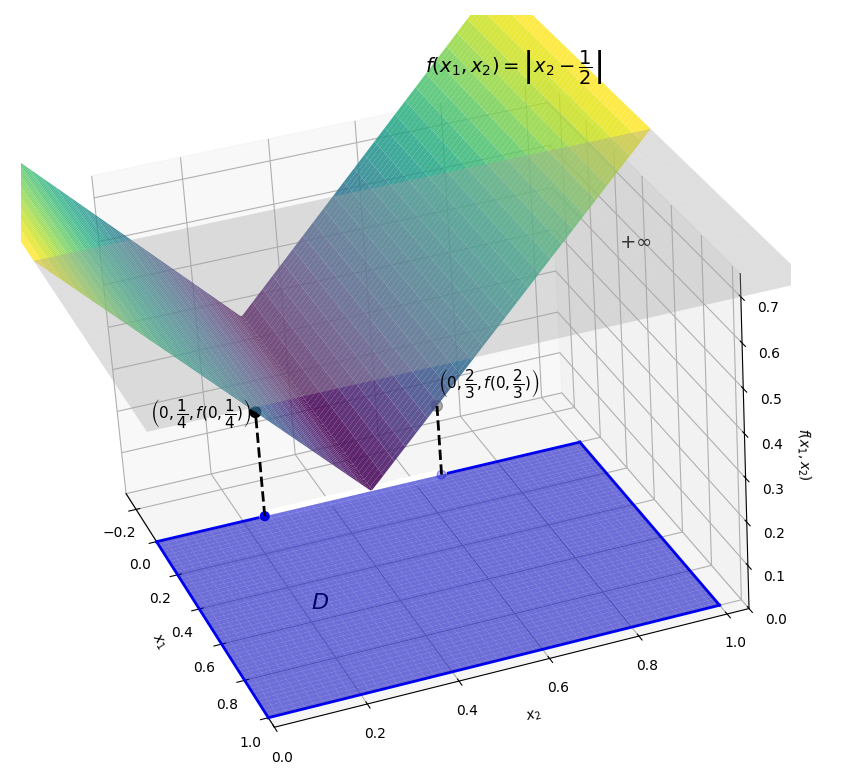}
	\caption{The graph of function $f$ and the set $D$ in Example~\ref{eg:opt_vals}}
	\label{fig:eg:opt_vals}
\end{figure}

As a direct consequence of Theorem~\ref{opt_values}, we can relate the sets $\mathcal{S}$ and $\mathcal{S}_1$. In particular, under condition~\eqref{reg1}, any solution of~\eqref{optim-1} remains a solution of~\eqref{optim-2}.

\begin{corollary}\label{Sols}
Under the assumptions of Theorem~\ref{opt_values}, $\mathcal{S}\subset\mathcal{S}_1$.
\end{corollary}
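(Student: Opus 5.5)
Take any $\ox\in\mathcal S$; the plan is to check that $\ox\in\bar D$ and that $\bar f(\ox)$ equals the common optimal value $v=\bar v$ given by Theorem~\ref{opt_values}. Membership is immediate, since $\ox\in D\subset\bar D$. It remains to show that $\ox$ minimizes $\bar f$ over $\bar D$.

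For the value, I will use three facts already established:
\begin{itemize}
\item $f(\ox)=v$, since $\ox\in\mathcal S$;
\item $\bar f\le f$ everywhere, as noted right after~\eqref{bar_f};
\item $\bar v=v$, by Theorem~\ref{opt_values} under~\eqref{reg1}.
\end{itemize}
Since $\ox\in\bar D$, they give
$$\bar v\le \bar f(\ox)\le f(\ox)=v=\bar v .$$
Hence $\bar f(\ox)=\bar v$, and $\ox$ attains the infimum of~\eqref{optim-2}, i.e.\ $\ox\in\mathcal S_1$.

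The only point needing care is the extended-real setting: whether "solution" requires a finite value, and whether $v$ might be $-\infty$. Under~\eqref{reg1} the problem~\eqref{optim-1} is feasible with finite values somewhere, and $\bar f$ is proper by Lemma~\ref{lem:bar_f}. So if $\mathcal S\neq\emptyset$, then $f(\ox)=v<+\infty$. Moreover $f(\ox)\ge \bar f(\ox)>-\infty$, so $v$ is finite. The chain of (in)equalities above is therefore among real numbers. If $\mathcal S=\emptyset$, the inclusion is trivial.
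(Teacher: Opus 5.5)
Your proof is correct and follows essentially the same route as the paper: $\bar x\in D\subset\bar D$ gives feasibility, and then the chain $\bar v\le\bar f(\bar x)\le f(\bar x)=v=\bar v$, obtained from $\bar f\le f$ and Theorem~\ref{opt_values}, forces $\bar f(\bar x)=\bar v$. Your extra finiteness check is valid but not needed, because the same chain of inequalities already holds in the extended reals.
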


\begin{proof} Define $v$ and $\bar v$ by~\eqref{eq:op_val}. If $\bar x \in\mathcal S$, then $\bar x\in D$ and $f(\bar x)=v$. Since $v=\bar v$ by Theorem~\ref{opt_values} and $\bar f(\bar x)\leq f(\bar x)$, this implies that $\bar f(\bar x)\leq \bar v$. As $\bar x\in D$ and $D\subset\bar D$, we see that $\bar x$ is a feasible point of~\eqref{optim-2}. So, the inequality $\bar f(\bar x)\leq \bar v$ forces $\bar f(\bar x)= \bar v$. Consequently, $\bar x$ belongs to $\mathcal{S}_1$. We have thus proved that $\mathcal{S}\subset \mathcal{S}_1$. $\hfill\Box$
\end{proof}

\begin{remark} The solution set of~\eqref{optim-1} can be a proper subset of the solution set of~\eqref{optim-2}. To justify this claim, consider the nearly convex optimization problem~\eqref{optim-1}, where the objective function $f$ and the constraint set $D$ are given in Example~\ref{eg:3.2}. Condition~\eqref{reg1} is satisfied because $\dom f=\R^2$ and $$\ri D={\rm int}\, D=(0,+\infty)\times\R.$$ Since $f$ is a continuous function on $\R^2$, we have $\bar f(x)=f(x)$ for all $x\in\mathbb R$ (see Lemma~\ref{lem:bar_f_lsc}). In addition, as $\bar D=\R_+\times\R$, the solution set of~\eqref{optim-2} is $\{0\}\times \R$. Clearly, the latter contains the solution set $\mathcal S$ of~\eqref{optim-1}, which was described in Example~\ref{eg:3.2}, as a proper subset. 	
\end{remark}

 It is of interest to determine under which conditions a point $\bar x \in \mathcal{S}_1$ also belongs to $\mathcal{S}$. The following corollary describes one situation where this occurs.

\begin{corollary}\label{Sols_1}
Under the assumptions of Theorem~\ref{opt_values}, any point $\bar x \in \mathcal{S}_1$ belonging to the set $D \cap {\rm ri}(\dom \bar f)=D \cap {\rm ri}(\dom f)$ is a solution of~\eqref{optim-1}.  In other words, 
\begin{equation}\label{reverse_incl}
\mathcal{S}_1\cap D \cap {\rm ri}(\dom \bar f)=\mathcal{S}_1\cap D \cap {\rm ri}(\dom f)\subset \mathcal S.\end{equation}
\end{corollary}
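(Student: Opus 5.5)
The plan has two parts: first identify the two sets in \eqref{reverse_incl}, then check directly that a point of $\mathcal S_1$ lying there solves \eqref{optim-1}.

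\emph{Equality of the sets.} By Lemma~\ref{eq_in_ridom}, $f$ proper and nearly convex gives ${\rm ri}(\dom \bar f)={\rm ri}(\dom f)$. Hence
\[
D\cap {\rm ri}(\dom \bar f)=D\cap {\rm ri}(\dom f),
\]
and intersecting with $\mathcal S_1$ gives the equality on the left of \eqref{reverse_incl}.

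\emph{Inclusion in $\mathcal S$.} Take $\bar x\in \mathcal S_1\cap D\cap {\rm ri}(\dom f)$, and define $v$ and $\bar v$ as in \eqref{eq:op_val}.
\begin{enumerate}
\item Since $\bar x\in {\rm ri}(\dom f)$, Lemma~\ref{eq_in_ridom} gives $f(\bar x)=\bar f(\bar x)$.
\item Since $\bar x\in\mathcal S_1$, we have $\bar f(\bar x)=\bar v$.
\item Theorem~\ref{opt_values} applies under the standing assumptions, in particular \eqref{reg1}, and gives $\bar v=v$.
\end{enumerate}
Combining these, $f(\bar x)=v$. Because $\bar x\in D$, the point $\bar x$ is feasible for \eqref{optim-1} and attains the infimum $v$, so $\bar x\in\mathcal S$.

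\emph{Finiteness of the value.} We should make sure $v$ is a genuine attained value rather than an infinite one. Here $\bar f(\bar x)$ is finite: $\bar f$ is proper by Lemma~\ref{lem:bar_f}, and $\bar x\in\dom f\subset\dom\bar f$. So this causes no difficulty.

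There is no real obstacle in this argument. The only points needing care are that the equality $\bar f=f$ is available exactly on ${\rm ri}(\dom f)$, which is why that set appears in the hypothesis, and that the equality of optimal values requires the regularity condition \eqref{reg1}.
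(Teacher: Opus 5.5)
Your proof is correct and follows the same route as the paper. You use Lemma~\ref{eq_in_ridom} both to identify ${\rm ri}(\dom\bar f)={\rm ri}(\dom f)$ and to get $f(\bar x)=\bar f(\bar x)$, and Theorem~\ref{opt_values} to get $\bar v=v$, so $f(\bar x)=v$ with $\bar x\in D$; your extra finiteness remark is correct but not needed, since these two facts already make $\bar x$ a minimizer.
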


\begin{proof}  First, recall that ${\rm ri}(\dom \bar f)={\rm ri}(\dom f)$ (see Lemma~\ref{eq_in_ridom}). Hence,  $$D \cap {\rm ri}(\dom \bar f)=D \cap {\rm ri}(\dom f).$$ Take any $\bar x \in \mathcal{S}_1$ such that $\bar x \in D \cap {\rm ri}(\dom \bar f)$. Since condition~\eqref{reg1} holds, applying Theorem~\ref{opt_values} yields
$$
v:=\inf\{f(x)\mid x\in D\}=\inf\{\bar f(x)\mid x\in \bar{D}\}.
$$
As $\bar x \in {\rm ri}(\dom \bar f)$, invoking Lemma~\ref{eq_in_ridom} gives $f(\bar x)=\bar f(\bar x)=v$. Moreover, since $\bar x \in D$, it follows that $\bar x\in\mathcal S$.
	$\hfill\Box$
	\end{proof}

In connection with the inclusion~\eqref{reverse_incl}, it is natural to ask:  \textit{What happens if $\bar x \notin {\rm ri}(\dom f)$ or, equivalently, whether the implication
$$
\bar x \in \mathcal{S}_1 \;\Longrightarrow\; \bar x \in \mathcal{S}
$$
still holds if only $\bar x\in D$?} The answer is negative, as shown by the next example.

\begin{example}\label{example_3.4}
Let $ C=\big([0,1]\times[0,1]\big)\setminus\{(0,0)\}.$ Clearly, $C$ is a convex set. Consider the function $f:\mathbb R^2\to\overline{\mathbb R}$ defined by
$$
f(x)=
\begin{cases}
0, & \text{if } x\in C,\\
 1 & \text{if } x=(0,0),\\
+\infty, & \text{otherwise},
\end{cases}
$$
and let $D=[0,1]\times[0,1]$. We observe that $
\ri D\cap \ri(\dom f)=(0,1)\times(0,1),$ and hence condition~\eqref{reg1} is satisfied. Since $$\epi f=\big(C\times[0,+\infty)\big)\cup \big(\{(0,0)\}\times [1,+\infty)\big)$$ is a convex set, $f$ is a convex function. The function $\bar f$ is given by
$$
\bar f(x)=
\begin{cases}
0, & \text{if } x\in D,\\
+\infty, & \text{if } x\notin D.
\end{cases}
$$
Clearly, $\bar x:=(0,0)\in D$ is a solution of~\eqref{optim-2}. However,
since $f(\bar x)=1$ and $f(x)=0$ for all $x\in D\setminus\{\bar x\}$, the point $\bar x$ is not a solution of~\eqref{optim-1}.
\end{example}

\subsection{Local solutions and global solutions}

It is well known that any local minimizer of a convex optimization problem is also a global minimizer. Hence, the next question is meaningful.

 \smallskip
\textbf{Question 3.} \textit{Does the set of local solutions of a nearly convex optimization problem coincide with the set of its global solutions? If this is not true in general, under what conditions does the set of local solutions coincide with the set of global solutions?}

  \smallskip
  Bo{\c{t}}, Grad, and Wanka~\cite[Theorem~2.3]{bkw2008} have proved the coincidence of the local solution set and the global solution set for any unconstrained optimization problem having a proper nearly convex objective function. (Observe that the notion of almost convexity in \cite[Definition~2.1]{bkw2008} coincides with the notion of near convexity in Definition~\ref{def_Minty}.)

For a constrained nearly convex optimization problem, the set of local solutions can be strictly larger than the set of its global solutions.

\begin{example}\label{local_global_sols} Consider the constrained nearly convex optimization problem of the form~\eqref{optim-1} with $f$ and $D$ being given as in Example~\ref{eg:opt_vals}. This problem has two local solutions: $\bar x:=\Big(0,\dfrac{1}{4}\Big)$ and $\hat x:=\Big(0,\dfrac{2}{3}\Big)$. Among the two points, $\hat x$ is a unique global solution. Note that, since $f(\bar x)=\dfrac{1}{4}$ and $f(\hat x)=\dfrac{1}{6}$, $\bar x$ is a unique local non-global solution of~\eqref{optim-1}.
\end{example}

Example~\ref{local_global_sols} has solved the first half of Question~3 in the negative. The forthcoming theorem provides us with an answer to the second half of Question~3. We will prove the theorem in two ways: The first one is to use Theorem~\ref{opt_values}, and the second one is to derive the result from the above-mentioned theorem of~\cite{bkw2008} and a sum rule for nearly convex functions.

\begin{theorem}\label{local_sols}
If~\eqref{optim-1} is a nearly convex optimization problem and the regularity condition~\eqref{reg1} is fulfilled, then any local solution $\bar x$ of~\eqref{optim-1} with $f(\bar x)\in\R$ is a global solution. Thus, under the regularity condition~\eqref{reg1}, the set of the local solutions of~\eqref{optim-1} having finite values of the objective function coincides with the global solution set.
\end{theorem}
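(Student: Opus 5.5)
We argue by contradiction, following the first route announced before the statement, which goes through Theorem~\ref{opt_values}. Let $\bar x\in D$ be a local solution of~\eqref{optim-1} with $f(\bar x)\in\R$. Then there is $\delta>0$ such that $f(x)\ge f(\bar x)$ for all $x\in D\cap B(\bar x,\delta)$. Suppose $\bar x$ is not global. By Theorem~\ref{opt_values}, $v=\bar v<f(\bar x)$, where $v$ and $\bar v$ are as in~\eqref{eq:op_val}. The chain of equalities at the end of the proof of Theorem~\ref{opt_values} shows that
$$v=\inf\{f(x)\mid x\in (\ri D)\cap({\rm ri}(\dom f))\}.$$
Hence we can pick a point $z\in(\ri D)\cap{\rm ri}(\dom f)$ with $f(z)<f(\bar x)$.

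Next we move from $z$ towards $\bar x$. For $t\in(0,1)$, set $x_t=(1-t)z+t\bar x$. Since $z\in\ri D$ and $\bar x\in D\subset\bar D$, Proposition~\ref{Connection} gives $x_t\in\ri D\subset D$. Since $z\in{\rm ri}(\dom f)$ and $\bar x\in\dom f$, and $\dom f$ is nearly convex by Proposition~\ref{nc dom}, the same proposition gives $x_t\in{\rm ri}(\dom f)$. Lemma~\ref{eq_in_ridom} then yields $f(x_t)=\bar f(x_t)$. By convexity and properness of $\bar f$ (Lemma~\ref{lem:bar_f}) together with $\bar f\le f$,
$$f(x_t)=\bar f(x_t)\le (1-t)\bar f(z)+t\bar f(\bar x)\le (1-t)f(z)+tf(\bar x)<f(\bar x).$$
The last inequality is strict because $f(z)<f(\bar x)$, both values are finite, and $t<1$.

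To conclude, take $t$ close to $1$, so that $\|x_t-\bar x\|=(1-t)\|z-\bar x\|<\delta$. Then $x_t\in D\cap B(\bar x,\delta)$ and $f(x_t)<f(\bar x)$, which contradicts local optimality. Therefore $\bar x$ is a global solution. The second assertion follows at once, because every global solution $x$ is a local one and has $f(x)=v\in\R$. Here $v$ is finite: it is at most $f(\bar x)<+\infty$ when such an $\bar x$ exists, and it is greater than $-\infty$ because it equals $\bar v$ and $\bar f$ is proper with finite values on the relevant points.

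The main obstacle is justifying the existence of the point $z$ that lies in ${\rm ri}(\dom f)$, not merely in $D$, and has a strictly smaller value. The naive convex-combination argument fails at points where $f$ exceeds $\bar f$, such as the boundary jump in Example~\ref{example_3.4}. This is exactly why the infimum representation over $(\ri D)\cap{\rm ri}(\dom f)$ from the proof of Theorem~\ref{opt_values} is needed; if one prefers not to quote an intermediate step of that proof, one can re-derive it by the same argument as Claim~1.
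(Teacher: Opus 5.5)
Your proof is correct. Like the paper's first proof it invokes Theorem~\ref{opt_values}, but the route is genuinely different.

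\textbf{How the paper argues.} The paper passes to $\varphi=f+\delta_D$. It uses the sum rule (Proposition~\ref{nc sum f}) and Proposition~\ref{T1}(b) to make $\varphi$ proper and nearly convex with ${\rm ri}(\dom\varphi)\cap B(\bar x,\varepsilon)\neq\emptyset$. It then applies Theorem~\ref{opt_values} to $\min\{\varphi(x)\mid x\in\bar B(\bar x,\varepsilon)\}$, so that $\bar x$ becomes a local, hence by convexity global, minimizer of the hull $\bar\varphi$. It applies Theorem~\ref{opt_values} again on $\R^n$ and closes with a $\liminf$ argument via Lemma~\ref{lem:bar_f_lsc}. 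The second proof simply quotes the unconstrained local-equals-global theorem of~\cite{bkw2008}.

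\textbf{How you argue, and what it buys.} You stay with $f$ and $D$ and use a segment argument. You take a point $z\in(\ri D)\cap{\rm ri}(\dom f)$ with $f(z)<f(\bar x)$, apply Proposition~\ref{Connection} to both $D$ and $\dom f$, and use that $\bar f$ is proper convex, $\bar f\le f$, and $\bar f=f$ on ${\rm ri}(\dom f)$. This needs no sum rule, no auxiliary hull $\bar\varphi$, and no localized problem. It also shows that~\eqref{reg1} serves only to supply a relative interior point from which segments stay in $D$ and in the region where $f=\bar f$. Your comment on Example~\ref{example_3.4} correctly pinpoints why starting there is necessary.

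\textbf{You can drop Theorem~\ref{opt_values} entirely.} Let $w\in D$ with $f(w)<f(\bar x)$ and $y\in(\ri D)\cap{\rm ri}(\dom f)$. Then $z=(1-s)w+sy$ lies in $(\ri D)\cap{\rm ri}(\dom f)$ for $s\in(0,1]$. Moreover $f(z)=\bar f(z)\le(1-s)f(w)+sf(y)<f(\bar x)$ for small $s>0$. So this second use of the same segment trick replaces the appeal to an intermediate step of that earlier proof.

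\textbf{One justification in your last paragraph needs fixing.} Properness of $\bar f$ does not make $\bar v$ finite: $\bar v$ can be $-\infty$, e.g.\ for a linear $\bar f$ on an unbounded $\bar D$. The correct reasons are these. A global solution $x$ has $v=f(x)>-\infty$ because $f$ is proper. Also $v<+\infty$ because~\eqref{reg1} gives $D\cap\dom f\neq\emptyset$, whether or not a finite-valued local solution is assumed to exist.
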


\noindent {\it First proof} (based on Theorem~\ref{opt_values} and a sum rule)  Define the indicator function of $D$ by setting
\begin{equation}\label{indicator}
\delta_D(x)=\begin{cases} 0 & \ \; \mbox{\rm if }\; x\in D\\
	+\infty & \ \;  \mbox{\rm if }\; x\notin D.\end{cases}
\end{equation} Put \begin{equation*}\label{sum_function} \varphi(x)=f(x)+\delta_D(x)\quad\; (x\in\R^n).\end{equation*} Let $\bar x$ be a local solution of~\eqref{optim-1} with $f(\bar x)\in\R$. Then, $\bar x\in (\dom f)\cap D$ and there is $\varepsilon>0$ such that $f(x)\geq f(\bar x)$ for all $x\in \bar B(\bar x,\varepsilon)\cap D$. Hence, $\bar x$ is a global solution of the optimization problem
\begin{equation}\label{optim-varphi}\min\Big\{\varphi(x)\mid x\in \bar B(\bar x,\varepsilon)\Big\}.
\end{equation} As $D$ is a nearly convex set, $\epi\delta_D=D\times\R_+$ is also a nearly convex set; so~$\delta_D$ is a nearly convex function. Condition~\eqref{reg1} implies that
\begin{equation}\label{reg_2}
\big({\rm ri}(\dom f)\big)\cap \big ({\rm ri}(\dom \delta_D)\big)= \big({\rm ri}(\dom f)\big)\cap \big(\ri D\big)\neq\emptyset.
\end{equation} Therefore, by the sum rule in Proposition~\ref{nc sum f}, $\varphi=f+\delta_D$ is a proper nearly convex function. Fix a point $x_0\in \big({\rm ri}(\dom f)\big)\cap \big(\ri D\big).$ Since $\bar x\in\dom f$ and $\dom f$ is a nearly convex set (see Proposition~\ref{nc dom}), by Proposition~\ref{Connection} we can assert that the point $x_t:=(1-t)\bar x+tx_0$ belongs to ${\rm ri}(\dom f)$ for any $t\in (0,1]$. Similarly, as $\bar x\in D$ and $x_0\in\ri D$, the point $x_t$ is contained in $\ri D$ for any $t\in (0,1]$. So, if $t\in (0,1)$ is small enough, then $$x_t\in \big({\rm ri}(\dom f)\big)\cap \big ({\rm ri}(\dom \delta_D)\big)\cap B(\bar x,\varepsilon).$$ In addition, since $\dom\varphi=(\dom f)\cap (\dom \delta_D)$, using~\eqref{reg_2} and Proposition~\ref{T1}(b) gives ${\rm ri}(\dom\varphi)=\big({\rm ri}(\dom f)\big)\cap \big ({\rm ri}(\dom \delta_D)$. Hence, if $t\in (0,1)$ is small enough, then $x_t\in {\rm ri}(\dom\varphi)\cap B(\bar x,\varepsilon)$. In particular, 
\begin{equation*}{\rm ri}(\dom\varphi)\cap B(\bar x,\varepsilon)\neq\emptyset.\end{equation*} This means that the nearly convex optimization problem~\eqref{optim-varphi} satisfies the regularity condition of the type~\eqref{reg1}. Therefore, by Theorem~\ref{opt_values} we have 
$$f(\bar x)=\min\Big\{\varphi(x)\mid x\in \bar B(\bar x,\varepsilon)\Big\}=\min\Big\{\bar\varphi(x)\mid x\in \bar B(\bar x,\varepsilon)\Big\},$$ where the function $\bar\varphi$ corresponding to $\varphi$ is defined in accordance with~\eqref{bar_f}. Since \begin{equation}\label{ineq_n1}\bar\varphi(\bar x)\leq\varphi(\bar x)=f(\bar x)+\delta_D(\bar x)=f(\bar x),\end{equation} this yields $\bar\varphi(\bar x)\leq \bar\varphi(x)$ for all $x\in \bar B(\bar x,\varepsilon)$. This shows that $\bar x$ is a local solution of the optimization problem
\begin{equation}\label{optim-bar-varphi}\min\Big\{\bar\varphi(x)\mid x\in \bar B(\bar x,\varepsilon)\Big\}.
\end{equation} As $\varphi$ is a proper nearly convex function, $\bar\varphi$ is a proper convex function by Lemma~\ref{lem:bar_f}. Then,~\eqref{optim-bar-varphi} is a convex optimization problem. It follows that $\bar x$ is a global solution of~\eqref{optim-bar-varphi}. Since ${\rm ri}(\dom\varphi)\neq\emptyset$, by Theorem~\ref{opt_values} we have 
\begin{equation}\label{values_equality}\min\Big\{\varphi(x)\mid x\in\R^n\Big\}=\min\Big\{\bar\varphi(x)\mid x\in\R^n\Big\}= \bar\varphi(\bar x).\end{equation} If 
\begin{equation}\label{eq_bar_x}\bar\varphi(\bar x)=f(\bar x),\end{equation} then the relations in~\eqref{values_equality} show that $\bar x$ is a global solution of~\eqref{optim-1}. If~\eqref{eq_bar_x} were false, then by~\eqref{ineq_n1} one would have \begin{equation}\label{ine_strict}\bar\varphi(\bar x)<f(\bar x).\end{equation}
Since $\varphi$ is a proper nearly convex function, $\bar\varphi$ is a proper convex function by Lemma~\ref{lem:bar_f}. Hence, as $f(\bar x)\in\R$, from~\eqref{ine_strict} we get $\bar\varphi(\bar x)\in \R$. In addition, according to Lemma~\ref{lem:bar_f_lsc},
\begin{equation}\label{bar_varphi}
\bar\varphi(\bar x)=\liminf\limits_{x\to\bar x}\varphi(x). 
\end{equation} Let $\{x_k\}\subset\R^n$ be a sequence with $\lim\limits_{k\to\infty} x_k=\bar x$ and \begin{equation}\label{x_k}\lim\limits_{k\to\infty} \varphi(x_k)=\liminf\limits_{x\to\bar x}\varphi(x).\end{equation} Recalling that $\varphi(x_k)=f(x_k)+\delta_D(x_k)$, by~\eqref{bar_varphi}, the properness of $f$, and the inclusion $\bar\varphi(\bar x)\in \R$, we can assume without any loss of generality that $\delta_D(x_k)=0$ for all $k\in\mathbb N$. Then, combining~\eqref{x_k} with~\eqref{bar_varphi} yields 
$\lim\limits_{k\to\infty} f(x_k)=\bar\varphi(\bar x).$ So, by~\eqref{ine_strict} we have $\lim\limits_{k\to\infty} f(x_k)<f(\bar x).$ Hence, there exists $\bar k\in\mathbb N$ such that $f(x_k)<f(\bar x)$ and $x_k\in \bar B(\bar x,\varepsilon)\cap D$ for all $k\geq\bar k$. This contradicts the local optimality of $\bar x$. 

Summing up all the above, we conclude that $\bar x$ is a global solution of~\eqref{optim-1}. $\hfill\Box$

\smallskip
\noindent {\it Second proof} (based on Theorem~2.3 from~\cite{bkw2008} and a sum rule) Define $\varphi$ by~\eqref{indicator}. Since condition~\eqref{reg1} implies~\eqref{reg_2}, by the sum rule in Proposition~\ref{nc sum f} we see that $\varphi=f+\delta_D$ is a proper nearly convex function. As $\bar x$ is a local solution of~\eqref{optim-1} with $f(\bar x)\in\R$, it is a local solution of the unconstrained nearly convex optimization problem \begin{equation}\label{unconst_varphi} \min\Big\{\varphi(x)\mid x\in \mathbb R\Big\}.\end{equation} Applying Theorem~2.3 from~\cite{bkw2008}, we can assert that $\bar x$ is a global solution of~\eqref{unconst_varphi}. Since $\varphi(x)=f(x)+\delta_D(x)$ for all $x\in\R^n$, this implies that $\bar x$ is a global solution of~\eqref{optim-1}.
$\hfill\Box$

\smallskip
The condition $f(\bar x)\in\R$ is essential for the validity of the first assertion of Theorem~\ref{local_sols}. To justify this claim, let us consider the following example.

\begin{example}\label{local_global_2} Consider the constrained nearly convex optimization problem of the form~\eqref{optim-1} where $D$ is the same as in Example~\ref{eg:opt_vals} and $$f(x)=\begin{cases} \left|x_2-\dfrac{1}{2}\right| \quad&  \mbox{\rm if }\; x=(x_1,x_2)\in \R \times \left(-\infty,\dfrac{3}{4}\right],\\
		+\infty & \mbox{\rm otherwise}.\end{cases}$$ Clearly, this problem satisfies the regularity condition~\eqref{reg1}. A direct verification shows that each point $\bar x=(\bar x_1,\bar x_2)\in D$ with $\bar x_1>\dfrac{3}{4}$, where $f(\bar x)=+\infty$, is a local non-global solution. Note that the optimal value 0 of the problem is attained at every point from the set $(0,1]\times\big\{\dfrac{1}{2}\big\}$.   		
\end{example}

\section{Optimality Conditions}\label{optimality}

In this section, we first obtain optimality conditions for nearly convex optimization problem under geometrical constraints, focusing on the Fermat rules and the relationships between optimality conditions for the given problem and its associated convex problem. Then, we establish optimality conditions for  nearly convex optimization problems under geometrical constraints and functional constraints in the form of Lagrange multiplier rules and Karush-Kuhn-Tucker conditions. As in the preceding section, the solution sets of~\eqref{optim-1} and~\eqref{optim-2} are denoted, respectively, by~$\mathcal{S}$ and~$\mathcal{S}_1$.

\subsection{Fermat's rules}\label{Fermat_subsection}

Consider the optimization problem in the form~\eqref{optim-1} and let $\delta_D(x)$ be defined by~\eqref{indicator}.  It is clear that a vector $\bar x\in D$ is a solution of~\eqref{optim-1} if and only if
 $$(f+\delta_D)(\bar x)\leq (f+\delta_D)(x)\ \ \mbox{\rm for all }\, x\in\mathbb R^n.$$ By Definition~\ref{subdifferential}, the latter can be rewritten equivalently as
 \begin{equation}\label{optimality-1a}
 	0\in\partial (f+\delta_D)(\bar x).
 \end{equation}  It is well known (see, e.g., the proof of Theorem~5.1 in~\cite{nty1}) that the set $D$ is nearly convex if and only if its indicator function $\delta_D$ is nearly convex. Recall that the normal cone to a nearly convex set at a point belonging to the set has been defined in Definition~\ref{normal_cone}.

 The following result, which directly employs the data of~\eqref{optim-1}, extends and deepens the result in~\cite[Proposition~6.1]{LM2019}.
 
 \begin{theorem}\label{Fermat} {\rm \textbf{(Fermat's rule for nearly convex optimization problems)}} A point $\bar x\in D$ is a solution of the nearly convex optimization problem~\eqref{optim-1} if and only if the inclusion~\eqref{optimality-1a} holds. Moreover, under the regularity condition~\eqref{reg1}, a point $\bar x\in D$ is a solution of~\eqref{optim-1} if and only if
 	\begin{equation}\label{optimality-1b}
 		0\in\partial f(\bar x)+  N(\bar x;D);
 	\end{equation}
 	 hence
 	$\mathcal{S}=\{\bar x\in\R^n\mid 0\in\partial f(\bar x)+  N(\bar x;D)\}.$
 \end{theorem}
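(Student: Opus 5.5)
The first assertion needs essentially no argument beyond the remark preceding the theorem. For $\bar x\in D$ we have $\delta_D(\bar x)=0$. If $f(\bar x)\in\R$, then $(f+\delta_D)(\bar x)=f(\bar x)$ is finite, and Definition~\ref{subdifferential} says that $0\in\partial(f+\delta_D)(\bar x)$ means exactly $f(\bar x)+\delta_D(\bar x)\le f(x)+\delta_D(x)$ for all $x\in\R^n$. Since $\delta_D=+\infty$ off $D$, this is the same as $f(\bar x)\le f(x)$ for all $x\in D$. I will state the convention used implicitly, namely that the subdifferential is considered only at points where the function is finite, so that a solution $\bar x$ is understood with $f(\bar x)\in\R$. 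The equivalence "$\bar x\in\mathcal S\iff$ \eqref{optimality-1a}" then follows immediately.

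For the second assertion I plan to apply the subdifferential sum rule, Proposition~\ref{sum_rule}, with $m=2$, $f_1=f$ and $f_2=\delta_D$. The hypotheses to check are as follows. The function $f$ is proper and nearly convex by assumption. The function $\delta_D$ is proper because $D\neq\emptyset$, and it is nearly convex because $\epi\delta_D=D\times\R_+$ lies between $C\times\R_+$ and $\overline C\times\R_+$ whenever $C\subset D\subset\overline C$ with $C$ convex. The regularity condition is ${\rm ri}(\dom f)\cap{\rm ri}(\dom\delta_D)={\rm ri}(\dom f)\cap\ri D\neq\emptyset$, which is precisely \eqref{reg1}. Proposition~\ref{sum_rule} then yields $\partial(f+\delta_D)(\bar x)=\partial f(\bar x)+\partial\delta_D(\bar x)$ for every $\bar x\in(\dom f)\cap D$.

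It remains to identify $\partial\delta_D(\bar x)$ with $N(\bar x;D)$ for $\bar x\in D$. This is a one-line check from the definitions: $x^*\in\partial\delta_D(\bar x)$ iff $\langle x^*,x-\bar x\rangle\le\delta_D(x)$ for all $x$, iff $\langle x^*,x-\bar x\rangle\le0$ for all $x\in D$, which is Definition~\ref{normal_cone}. Combining these gives $0\in\partial(f+\delta_D)(\bar x)\iff 0\in\partial f(\bar x)+N(\bar x;D)$, so the second assertion follows from the first.

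The main subtlety is the bookkeeping at points with $\bar x\notin\dom f$. There, $\partial f(\bar x)$ is not defined (take it to be empty), so \eqref{optimality-1b} fails automatically, consistent with $\bar x$ not being counted as a solution. For the final set identity, I note that \eqref{optimality-1b} forces $N(\bar x;D)\neq\emptyset$, hence $\bar x\in D$, and forces $\partial f(\bar x)\ne\emptyset$, hence $f(\bar x)\in\R$. Therefore $\{\bar x\in\R^n\mid 0\in\partial f(\bar x)+N(\bar x;D)\}$ automatically lies in $D\cap\dom f$, and on that set the equivalence above applies, giving $\mathcal S=\{\bar x\in\R^n\mid 0\in\partial f(\bar x)+N(\bar x;D)\}$.
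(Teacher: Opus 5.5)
Your proposal is correct and takes the same route as the paper: the first assertion is read off from the definition of $\partial(f+\delta_D)(\bar x)$, and the second follows from the sum rule in Proposition~\ref{sum_rule} applied to $f$ and $\delta_D$, whose regularity hypothesis is exactly \eqref{reg1}, together with $\partial\delta_D(\bar x)=N(\bar x;D)$. Your extra bookkeeping spells out what the paper leaves implicit: the convention that solutions have $f(\bar x)\in\R$, and the fact that \eqref{optimality-1b} forces $\bar x\in D\cap\dom f$.
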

\begin{proof}
The first assertion has already been proved above. The second assertion follows because, under the assumptions of the theorem, we  can apply the subdifferential sum rule in Proposition~\ref{sum_rule} to the nearly convex functions $f$ and $\delta_D$ to derive~\eqref{optimality-1b} from~\eqref{optimality-1a}  and the formula $\partial\delta_D(\bar x)=N(\bar x;D)$.  $\hfill\Box$
\end{proof}

The next example shows how Theorem~\ref{Fermat} can be used in practice.

\begin{example}\label{example_4.1}
	Consider problem~\eqref{optim-1}, where $f$ and $D$ are the same as in Example~\ref{example_3.4}. Recall that the regularity condition~\eqref{reg1} is satisfied. It is easy to verify that for any $\bar x\in D\setminus\{(0,0)\}$, $\partial f(\bar x) = \delta_D(\bar x)=N(\bar x; D)$. So, one has
	$$\partial f(\bar x)+  N(\bar x;D)=N(\bar x;D)+ N(\bar x;D) = N(\bar x;D).$$
	Since $0\in N(\bar x;D)$, the inclusion~\eqref{optimality-1b} holds. By Theorem~\ref{Fermat}, this implies that $(D\setminus\{(0,0)\})\subset\mathcal S$.  For $\bar x = (0,0)$, as $\partial f(\bar x)=\emptyset$, the inclusion~\eqref{optimality-1b} fails. Therefore, we get $	\mathcal{S}=D\setminus\{(0,0)\}$.
\end{example}
 
 In general, the calculation of the subdifferential of a nearly convex function is more complicated than that of a convex function. So, solving problem~\eqref{optim-2} is  often easier than solving problem~\eqref{optim-1}. The forthcoming theorem paves a way to solve the nearly convex optimization problem~\eqref{optim-1} by using Fermat's rule for its associated convex problem~\eqref{optim-2}.
 
  \begin{theorem}\label{Fermat1} {\rm \textbf{(Fermat's rule for the associated convex problem and its consequences)}} For the nearly convex optimization problem~\eqref{optim-1}, the following assertions are valid.
  		\begin{itemize}
  			\item[{\rm (a)}] A point $\bar x\in \bar D$ is a solution of the convex optimization problem~\eqref{optim-2} if and only if 
  			\begin{equation}\label{optimality-2a}
  				0\in\partial (\bar f+\delta_{\bar D})(\bar x).
  			\end{equation}
  			\item[{\rm (b)}]  {\rm \textbf{(Fermat's rule for the associated convex problem)}} If the regularity condition~\eqref{reg1} is satisfied, then a point $\bar x\in \bar D$ is a solution of~\eqref{optim-2} if and only if
  			\begin{equation}\label{optimality-2b}
  				0\in\partial \bar f(\bar x)+  N(\bar x;\bar D);
  			\end{equation}
  			hence
  			\begin{equation}\label{sol_set_S1}
  				\mathcal{S}_1=\{\bar x\in\R^n\mid 0\in\partial \bar f(\bar x)+  N(\bar x;\bar D)\}.
  			\end{equation}
  			\item[{\rm (c)}] {\rm \textbf{(Necessary optimality condition for a  nearly convex optimization problem)}} If the regularity condition~\eqref{reg1} holds and $\bar x\in D$ is a solution of~\eqref{optim-1}, then the inclusion~\eqref{optimality-2b} holds.
  			\item[{\rm (d)}] {\rm \textbf{(Sufficient optimality condition for a  nearly convex optimization problem)}} If the regularity condition~\eqref{reg1} is fulfilled, then any point $\bar x \in D \cap {\rm ri}(\dom \bar f)$, which satisfies the inclusion~\eqref{optimality-2b}, is a solution of~\eqref{optim-1}.
  		\end{itemize}
 \end{theorem}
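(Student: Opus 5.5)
Part (a) is the convex analogue of the first assertion of Theorem~\ref{Fermat}. A point $\bar x\in\bar D$ solves~\eqref{optim-2} exactly when $(\bar f+\delta_{\bar D})(\bar x)\le(\bar f+\delta_{\bar D})(x)$ for all $x\in\R^n$. Two facts are needed here. First, $\bar f$ is proper by Lemma~\ref{lem:bar_f}, so the left-hand side is finite whenever the optimal value of~\eqref{optim-2} is finite. (If $\bar f(\bar x)=+\infty$, both sides of the equivalence fail or degenerate; I will handle this by the convention that $\partial$ is taken only at points of finite value, consistent with Definition~\ref{subdifferential}.) Second, Definition~\ref{subdifferential} turns this global inequality into~\eqref{optimality-2a}.

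For (b) I will apply the sum rule of Proposition~\ref{sum_rule} with $m=2$ to $\bar f$ and $\delta_{\bar D}$. Both are proper, and both are convex, hence nearly convex: $\bar f$ by Lemma~\ref{lem:bar_f}, and $\delta_{\bar D}$ because $\bar D$ is closed and convex. The qualification condition to verify is ${\rm ri}(\dom\bar f)\cap{\rm ri}\,\bar D\ne\emptyset$. Lemma~\ref{eq_in_ridom} gives ${\rm ri}(\dom\bar f)={\rm ri}(\dom f)$, and $\ri\bar D=\ri D$, so this intersection equals the set in~\eqref{reg1}, which is nonempty by assumption. The sum rule then gives $\partial(\bar f+\delta_{\bar D})(\bar x)=\partial\bar f(\bar x)+\partial\delta_{\bar D}(\bar x)$ for $\bar x\in\dom\bar f\cap\bar D$. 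Combining this with $\partial\delta_{\bar D}(\bar x)=N(\bar x;\bar D)$, which is immediate from the definitions, and with (a), yields~\eqref{optimality-2b}. The formula~\eqref{sol_set_S1} follows because points outside $\bar D$ have empty normal cone.

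For (c), take $\bar x\in\mathcal S$. Corollary~\ref{Sols}, which uses~\eqref{reg1}, gives $\bar x\in\mathcal S_1$, and then (b) gives~\eqref{optimality-2b}. For (d), let $\bar x\in D\cap{\rm ri}(\dom\bar f)$ satisfy~\eqref{optimality-2b}. Since $\bar x\in D\subset\bar D$, part (b) gives $\bar x\in\mathcal S_1$, and Corollary~\ref{Sols_1} then gives $\bar x\in\mathcal S$.

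The main obstacle is the bookkeeping in (a) and (b) at points where $\bar f(\bar x)=+\infty$. There the subdifferential is empty by convention, so the equivalence needs the optimal value to be finite. I will argue this from~\eqref{reg1} and Theorem~\ref{opt_values}: the value $\bar v=v$ is finite from above because some $y\in D\cap\dom f$ exists. It is not $-\infty$ at a minimizer, since $\bar f$ is proper. Hence any minimizer has finite value. In (a) without~\eqref{reg1}, the same conclusion holds whenever a minimizer with finite value exists, and otherwise both sides fail. Everything else reduces directly to earlier results.
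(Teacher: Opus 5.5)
Your proof follows the paper's argument essentially step for step: (a) comes from the definition of the subdifferential, (b) from the sum rule of Proposition~\ref{sum_rule} applied to $\bar f$ and $\delta_{\bar D}$ using ${\rm ri}(\dom\bar f)={\rm ri}(\dom f)$ and $\ri\bar D=\ri D$, and (c), (d) from Corollaries~\ref{Sols} and~\ref{Sols_1} combined with (b). Your extra bookkeeping at points where $\bar f(\bar x)=+\infty$ is sound under~\eqref{reg1}, but your remark that without~\eqref{reg1} ``both sides fail'' when no finite-valued minimizer exists is not quite right: if $\bar D\cap\dom\bar f=\emptyset$, every point of $\bar D$ is trivially a minimizer of~\eqref{optim-2} while~\eqref{optimality-2a} fails, a degenerate case that the paper's statement of (a) also silently ignores.
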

 
  \begin{proof} Assertion~(a) follows from the definition of subdifferential given recalled in Section~\ref{nc_problems}. 
  	
  	To prove~(b), we get by the near convexity of $f$ that ${\rm ri} (\dom \bar f)={\rm ri}(\dom f)$ (see Lemma~\ref{eq_in_ridom}). Then, from the near convexity of $D$ we can deduce that 
  	$${\rm ri}\big(\dom\big(\delta_{\bar D}\big)\big)=\ri(\bar D)=\ri D.$$ Therefore, thanks to the regularity condition~\eqref{reg1}, we have
  	$$\big({\rm ri}(\dom\bar f)\big)\cap \Big({\rm ri}\big(\dom\big(\delta_{\bar D}\big)\big)\Big)=\big({\rm ri}(\dom f)\big)\cap (\ri D)\neq\emptyset.$$ So, by the subdifferential sum rule in Proposition~\ref{sum_rule} we have for every $\bar x\in\bar D$ the following
  	$$\partial (\bar f+\delta_{\bar D})(\bar x)=\partial\bar f(\bar x)+ \partial\delta_{\bar D}(\bar x)=\partial \bar f(\bar x)+  N(\bar x;\bar D).$$
  	Hence, the inclusion~\eqref{optimality-2a} can be rewritten equivalently as the one in~\eqref{optimality-2b}. Now, it is clear that assertion~(b) follows from assertion~(a).
  	
  	Assertion~(c) follows from Corollary~\ref{Sols} and assertion~(b), while 
  	assertion~(d) is a direct consequence of Corollary~\ref{Sols_1} and assertion~(b).
  	$\hfill\Box$
  \end{proof}

\begin{example}\label{example_4.2}
	Consider problem~\eqref{optim-1} with $f$ and $D$ defined in Example~\ref{example_3.4}. Since the regularity condition~\eqref{reg1} is satisfied, formula~\eqref{sol_set_S1} is valid. As $\bar D=D=[0,1]\times[0,1]$ and 
	$$
	\bar f(x)=
	\begin{cases}
		0, & \text{if } x\in D,\\
		+\infty, & \text{if } x\notin D,
	\end{cases}
	$$ we see that $\mathcal{S}_1=D$. This equality is in full agreement with~\eqref{sol_set_S1}. Note that assertion~(d) in Theorem~\ref{Fermat1} assures that $(0,1)\times (0,1)\subset\mathcal{S}$. Since  $	\mathcal{S}=D\setminus\{(0,0)\}$, the last inclusion is strict.
\end{example}
 
\subsection{Lagrange multiplier rules}\label{Lagrange_subsection}
 Usually, the constraint set of~\eqref{optim-1} is of the form 
 \begin{equation}\label{D_intersection}
 	D=\Omega_0\cap\Omega_1,
 \end{equation}
 where $\Omega_0$ is a nonempty set in $\R^n$ and  \begin{equation}\label{constraint}
 	\Omega_1=\big\{x\in\R^n\mid g_i(x)\leq 0 \ \,\mbox{for each }\, i=1, \ldots, m\big\},
 \end{equation}
 with $g_i\colon \R^n \to \oR$, $i=1, \ldots, m$, being proper functions. The set $\Omega_0$ is called a \textit{geometrical constraint }and the inequalities  $g_i(x)\leq 0,$ $i=1, \ldots, m$, are said to be \textit{functional constraints}.
 
 \begin{definition}\label{nc_prob2} {\rm We say that~\eqref{optim-1} is a \textit{nearly convex optimization problem under a geometrical constraint and functional constraints} if $f$ is nearly convex and the set~$D$ defined by~\eqref{D_intersection} is nearly convex. In that case, if the functions $g_1,\ldots,g_m$ are absent, then we have a \textit{nearly convex optimization problem under a geometrical constraint}. If $\Omega_0=\R^n$, then we have a \textit{nearly convex optimization problem under functional constraints}.}
 \end{definition}
 
 \begin{definition}\label{Slater} {\rm  If there is a point $x^0\in \big(\ri\Omega_0\big)\cap \left(\bigcap\limits_{i=1}^m\mbox{\rm ri}(\dom g_i)\right)$ with $g_i(x^0) < 0$ for all $i=1, \ldots, m$, then we say that the problem~\eqref{optim-1}, where $D$ is given by~\eqref{D_intersection}, satisfies the \textit{generalized Slater condition}.}
 \end{definition}
 
 The generalized Slater condition is a kind of \textit{constraint qualification}, also called a \textit{regularity condition}, for the optimization problem under consideration. It is important for studying convex optimization problems, as well as nearly convex optimization problems.
 
 The following lemma gives a sufficient conditions for an optimization problem under a geometrical constraint and functional constraints to be nearly convex.
 
 \begin{lemma}\label{nc_funct} Suppose that $f$ is a proper nearly convex function, $\Omega_0$ is a nearly convex set, and  $g_1,\ldots, g_m$  are proper nearly convex functions. If the problem~\eqref{optim-1} with~$D$ being given by~\eqref{D_intersection} satisfies the generalized Slater condition, then it is a nearly convex optimization problem.
 \end{lemma}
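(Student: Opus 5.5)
We need to show that $D=\Omega_0\cap\Omega_1$ is nearly convex, since $f$ is nearly convex by assumption. The plan is to write $\Omega_1$ as the zero sublevel set of the maximum function
\[
g(x)=\max\{g_i(x)\mid i=1,\ldots,m\},
\]
so that $\Omega_1=\{x\mid g(x)\le 0\}$. We then show that this sublevel set is nearly convex, and finally intersect it with $\Omega_0$ using Proposition~\ref{T1}(b).

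\textbf{Near convexity of $g$ and of $\Omega_1$.} The Slater point $x^0$ lies in $\bigcap_i{\rm ri}(\dom g_i)$, so Proposition~\ref{NCMf} gives that $g$ is nearly convex, with $\dom g=\bigcap_i\dom g_i$. The function $g$ is proper, because every $g_i$ is. To pass to the sublevel set, note that
\[
\Omega_1\times\{0\}=(\epi g)\cap(\R^n\times\{0\}).
\]
Applying Proposition~\ref{T1}(b) to the nearly convex sets $\epi g$ and $\R^n\times\{0\}$, and then the linear projection in Proposition~\ref{T1}(a), yields near convexity of $\Omega_1$. The required condition is ${\rm ri}(\epi g)\cap(\R^n\times\{0\})\ne\emptyset$. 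By Proposition~\ref{riepi}, ${\rm ri}(\epi g)=\{(x,\lambda)\mid x\in{\rm ri}(\dom g),\ \lambda>g(x)\}$. By Proposition~\ref{T1}(b), ${\rm ri}(\dom g)=\bigcap_i{\rm ri}(\dom g_i)$; here each $\dom g_i$ is nearly convex by Proposition~\ref{nc dom}, and these relative interiors meet at $x^0$. Hence $x^0\in{\rm ri}(\dom g)$ and $g(x^0)<0$, so $(x^0,0)$ lies in the intersection. This also gives
\[
{\rm ri}\,\Omega_1\supset\{x\in{\rm ri}(\dom g)\mid g(x)<0\}\ni x^0,
\]
because ${\rm ri}$ commutes with the intersection and with the linear image by Proposition~\ref{T1}.

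\textbf{Intersection with $\Omega_0$.} We have $x^0\in\ri\Omega_0\cap\ri\Omega_1$. Proposition~\ref{T1}(b) (or Proposition~\ref{NCI}) then shows that $D=\Omega_0\cap\Omega_1$ is nearly convex, and the problem is a nearly convex optimization problem.

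\textbf{Main obstacle.} The delicate step is the identification of $\ri\Omega_1$ so that it contains $x^0$. Concretely, we must check that the relative interior of $(\epi g)\cap(\R^n\times\{0\})$ is ${\rm ri}(\epi g)\cap(\R^n\times\{0\})$, using ${\rm ri}(\R^n\times\{0\})=\R^n\times\{0\}$, and that projecting onto $\R^n$ preserves relative interiors by Proposition~\ref{T1}(a). Once this bookkeeping is done, the rest is a direct application of the cited propositions.
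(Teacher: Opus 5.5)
Your proof is correct, but it is organized differently from the paper's.

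\textbf{The paper's route.} The paper lifts the whole feasible set in one step: it writes $D=A\big((\Omega_0\times(-\infty,0])\cap\epi g\big)$ with $A(x,\mu)=x$. It then checks a single qualification condition, $(x^0,\lambda_0)\in\ri(\Omega_0\times(-\infty,0])\cap\ri(\epi g)$ for some $\lambda_0\in(g(x^0),0)$. This requires Proposition~\ref{ri_prod} for the relative interior of the product, and a strictly negative level $\lambda_0$ because $\ri(-\infty,0]=(-\infty,0)$. One application of Proposition~\ref{NCI} (intersection) and one of Proposition~\ref{T1}(a) (projection) then finish the argument.

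\textbf{Your route.} You proceed in two stages. First you cut $\epi g$ with the hyperplane $\R^n\times\{0\}$. Its relative interior is itself, so you can use the level $0$ directly and never need Proposition~\ref{ri_prod}. Projecting gives near convexity of $\Omega_1$ together with $\ri\Omega_1=\{x\in\ri(\dom g)\mid g(x)<0\}\ni x^0$; your stated inclusion is in fact an equality. A second application of Proposition~\ref{T1}(b), now in $\R^n$, handles $\Omega_0\cap\Omega_1$.

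\textbf{Comparison.} The paper's argument is more compact: one intersection and one projection. Yours is more modular and produces as by-products exactly the facts the paper needs later in Lemma~\ref{lem4.3}, namely that $\Omega_1$ is nearly convex and that $x^0\in\ri\Omega_0\cap\ri\Omega_1$. There the paper only asserts these can be ``easily shown.'' Both routes also give $x^0\in\ri D$, which is what Theorem~\ref{Lagrange_associated} uses to verify the regularity condition.
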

 \begin{proof}
  According to Definition~\ref{nc_prob1},  we need only to prove that  the set $D$ defined by~\eqref{D_intersection} is nearly convex. By the generalized Slater condition, we can find a point
  \begin{equation}\label{x0}
  	x^0\in \big(\ri\Omega_0\big)\cap \left(\bigcap\limits_{i=1}^m\mbox{\rm ri}(\dom g_i)\right)
  \end{equation} such that  $g_i(x^0) < 0$ for all $i=1, \ldots, m$.  	Define
 	\begin{equation}\label{MF-1}
 		g(x)=\max\big\{g_i(x)\mid  i=1, \ldots, m\big\},\ \; x\in \R^n.
 	\end{equation} 
 	Since $g_1,\ldots, g_m$ are nearly convex functions and by~\eqref{x0} one has $\bigcap\limits_{i=1}^m\mbox{\rm ri}(\dom g_i)\neq\emptyset$, it follows from Proposition~\ref{NCMf} that $g$ is a nearly convex function. Moreover, by Proposition~\ref{nc dom}, the sets $\dom g_1,\ldots, \dom g_m$ are nearly convex. Therefore, thanks to the condition $\bigcap\limits_{i=1}^m\mbox{\rm ri}(\dom g_i)\neq\emptyset$, we can apply Proposition~\ref{T1}(b) to the sets $\dom g_1,\ldots,\dom g_m$ to obtain
 	\begin{equation*}
 		\ri\left(\bigcap_{i=1}^m\dom g_i\right)=\bigcap_{i=1}^m\mbox{\rm ri}(\dom g_i).
 	\end{equation*} 
 	Since $\dom g=\bigcap\limits_{i=1}^m\dom g_i$, this yields
  		${\rm ri}(\dom g)=\bigcap\limits_{i=1}^m\mbox{\rm ri}(\dom g_i)$.
 Hence, the inclusion~\eqref{x0} implies that $x^0\in{\rm ri}(\dom g)$.  
 	
  Next, by~\eqref{MF-1} we can represent the set~$\Omega_1$ in~\eqref{constraint} as
 	\begin{equation}\label{constraint-1} \Omega_1= \big\{x\in\R^n\mid g(x)\leq 0\big\}.
 	\end{equation}
 		Define the linear mapping $A:\R^n\times\R\to\R^n$ by setting $A(x,\mu)=x$ for every $(x,\mu)\in \R^n\times\R$. Then, by~\eqref{D_intersection} and~\eqref{constraint-1} one gets
 			\begin{equation}\label{constraint-2} D=A\Big(\big(\Omega_0\times (-\infty,0]\big)\cap  (\epi g)\Big).
 		\end{equation}
 		Since $g_i(x^0) < 0$ for $i=1, \ldots, m$, we have $g(x^0)<0$.  On one hand, for an arbitrarily chosen value $\lambda_0\in \big(g(x^0),0\big)$, by~\eqref{x0}) and Proposition~\ref{ri_prod} we have \begin{equation}\label{ri2} (x^0,\lambda_0)\in \ri \Omega_0\times (-\infty,0)=  \mbox{\rm ri}\, \big(\Omega_0\times (-\infty,0]).\end{equation} 
 		On the other hand, applying Proposition~\ref{riepi} to the nearly convex function $g$, we have
 		\begin{equation*}
 			\mbox{\rm ri}(\epi g)=\Big\{(x, \lambda)\in \R^n\times \R\mid x\in \mbox{\rm ri}(\dom g),\ \lambda>g(x)\Big\}.
 		\end{equation*} 
 	Since $x^0\in {\rm ri}(\dom g)$ and $\lambda_0>g(x^0)$, this equality tells us that
 		\begin{equation}\label{ri1}
 			(x^0,\lambda_0)\in\mbox{\rm ri}(\epi g).
 		\end{equation} 
 		Hence, combining~\eqref{ri2} and~\eqref{ri1} yields
 		\begin{equation}\label{Slater_1} (x^0,\lambda_0)\in \mbox{\rm ri}\, \Big(\Omega_0\times (-\infty,0]\Big)\cap \Big(\mbox{\rm ri}(\epi g)\Big).
 		\end{equation}
 		Therefore, by Proposition~\ref{NCI}, the set $\Big(\Omega_0\times (-\infty,0]\Big)\cap (\epi g)$ is nearly convex. This allows us to apply Proposition~\ref{T1}(a) together with~\eqref{constraint-2} to conclude that the set~$D$ is a nearly convex. $\hfill\Box$
  \end{proof}
  
 Under suitable conditions involving near convexity and regularity, the topological closure of the set $\Omega_1$ in~\eqref{constraint} can be computed via the functions $\bar g_1,\ldots, \bar g_m$.
  
\begin{lemma}\label{lem4.2}
  		Let $g_i:\R^n\to\oR$, $i=1,\ldots, m$, be proper nearly convex functions  such that there exists a point $x^0\in \bigcap\limits_{i=1}^m\mbox{\rm ri}(\dom g_i)$ with $g_i(x^0)< 0$ for $i=1,\ldots, m$. Then,
  		\begin{equation}\label{cl_omega_1}
  			\overline{\Omega_1}= \big\{x\in\R^n\mid \bar g_i(x)\leq 0 \ \,\mbox{for }\, i=1, \ldots, m\big\},
  		\end{equation} where $\Omega_1$ is defined by~\eqref{constraint}.
  	\end{lemma}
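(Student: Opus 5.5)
We reduce to a single function by introducing the maximum function $g$ defined in~\eqref{MF-1}, so that $\Omega_1=\{x\mid g(x)\le 0\}$ as in~\eqref{constraint-1}. Call the right-hand side of~\eqref{cl_omega_1} $\Omega_1'$. Each $\bar g_i$ is lower semicontinuous and convex by Lemma~\ref{lem:bar_f}, so $\Omega_1'$ is closed. Since $\bar g_i\le g_i$, we have $\Omega_1\subset\Omega_1'$, and therefore $\overline{\Omega_1}\subset\Omega_1'$. This direction is routine; the whole work lies in the reverse inclusion.

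For the reverse inclusion, take $y\in\Omega_1'$, so that $\bar g_i(y)\le 0$ for every $i$. We will show that $x_t:=(1-t)x^0+ty$ belongs to $\Omega_1$ for all $t\in[0,1)$. Letting $t\uparrow 1$ then gives $y\in\overline{\Omega_1}$.

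Fix $i$ and $t\in[0,1)$. Since $y\in\dom\bar g_i$ and $\dom\bar g_i\subset\overline{\dom g_i}$, which holds because $\epi\bar g_i=\overline{\epi g_i}$ and we project onto the $x$-component, we can combine $x^0\in{\rm ri}(\dom g_i)$ with Proposition~\ref{Connection} applied to the nearly convex set $\dom g_i$ (Proposition~\ref{nc dom}). This yields $x_t\in{\rm ri}(\dom g_i)$. By Lemma~\ref{eq_in_ridom}, $g_i(x_t)=\bar g_i(x_t)$. Convexity of $\bar g_i$ together with $\bar g_i(x^0)\le g_i(x^0)<0$ then gives
$$g_i(x_t)=\bar g_i(x_t)\le (1-t)\bar g_i(x^0)+t\bar g_i(y)\le (1-t)g_i(x^0)<0,$$
using $\bar g_i(y)\le 0$. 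Hence $x_t\in\Omega_1$ for each $t\in[0,1)$, which completes the argument.

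The main obstacle is justifying $x_t\in{\rm ri}(\dom g_i)$, since $y$ need not lie in $\dom g_i$ itself. We handle this through the inclusion $\dom\bar g_i\subset\overline{\dom g_i}$. To verify it, take $(y,\alpha)\in\overline{\epi g_i}$; it is the limit of a sequence $(y_k,\alpha_k)\in\epi g_i$, so $y_k\in\dom g_i$ and $y_k\to y$. Proposition~\ref{Connection} is stated precisely for points $b\in\overline{\Omega}$, so this inclusion is exactly what is needed.

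The convexity inequality is legitimate because $\bar g_i$ is proper (Lemma~\ref{lem:bar_f}) and both endpoints have finite values or values at most $0$, so no $\infty-\infty$ issue arises. The Slater-type point $x^0$ is used only through $x^0\in\bigcap_i{\rm ri}(\dom g_i)$ and $g_i(x^0)<0$; the same $x_t$ serves every index $i$ simultaneously.
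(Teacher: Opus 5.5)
Your proof is correct, but it takes a different route from the paper's. The paper works entirely at the level of sets. It writes $\Omega_1=A\big((\bigcap_{i}\epi g_i)\cap(\R^n\times(-\infty,0])\big)$ with the projection $A(x,\mu)=x$. It then uses Proposition~\ref{T1}(b) to turn the closure of this intersection into $\big(\bigcap_i\overline{\epi g_i}\big)\cap(\R^n\times(-\infty,0])$; the qualification comes from a point $(x^0,\lambda)$ with $\lambda<0$ lying in every ${\rm ri}(\epi g_i)$, obtained via Proposition~\ref{riepi}. Next, Proposition~\ref{T1}(a) and $\overline{\epi g_i}=\epi\bar g_i$ give $\overline{\Omega_1}=\overline{\{x\mid \bar g_i(x)\le 0\ \forall i\}}$, and lower semicontinuity of the $\bar g_i$ concludes.

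You instead give a direct segment argument. The inclusion $\dom\bar g_i\subset\overline{\dom g_i}$ and Proposition~\ref{Connection} put $[x^0,y)$ inside every ${\rm ri}(\dom g_i)$. There Lemma~\ref{eq_in_ridom} lets you replace $g_i$ by the convex function $\bar g_i$, and a one-line convexity estimate finishes.

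Your version is more elementary and yields a little more. Every point of the right-hand side is a limit of points where all $g_i$ are strictly negative, so $\overline{\Omega_1}$ is also the closure of the strict sublevel set. It also sidesteps a small detail in the paper's route: its choice $\lambda=\max_i g_i(x^0)$ must be raised into $(\max_i g_i(x^0),0)$ for $(x^0,\lambda)$ to belong to every ${\rm ri}(\epi g_i)$. The paper's version, in turn, needs no pointwise comparison of $g_i$ with $\bar g_i$. It reuses the epigraph-projection representation and the closure calculus of Proposition~\ref{T1} that also drive Lemmas~\ref{nc_funct} and~\ref{lem4.3}.
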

    \begin{proof} Let $A:\R^n\times\R\to\R^n$ be the linear mapping considered in the proof of Lemma~\ref{nc_funct}, that is $A(x,\mu)=x$ for every $(x,\mu)\in \R^n\times\R$. Put $I=\{1,\ldots,m\}$. From~\eqref{constraint} it follows that
    	\begin{equation}\label{Omega1_epi_A} \Omega_1= A\left(\Big(\bigcap_{i\in I} \epi g_i\Big)\cap  \big(\R^n\times (-\infty,0]\big)\right).
       	\end{equation}
  		 Since $x^0\in\mbox{\rm ri}(\dom g_i)$ and $g_i(x^0)< 0$ for all $i\in I$, by setting $$\lambda=\max\big\{g_i(x^0)\mid i\in I\big\}$$ and invoking the second assertion of Proposition~\ref{riepi} we have by the near convexity of $g_1,\ldots,g_m$ that $$\big(x^0,\lambda\big)\in {\rm ri}(\epi g_i)\cap\big(\R^n\times (-\infty,0)\big)$$ for all $i\in I$. Hence,
  		\begin{equation}\label{reg_n1}
  		\left(\bigcap_{i\in I}{\rm ri}(\epi g_i)\right)\cap\big(\R^n\times (-\infty,0)\big)\neq \emptyset.
  			\end{equation}
  		 As $\epi g_1,\ldots, \epi g_m$, and $\R^n\times (-\infty,0]$ are nearly convex sets, by the property~\eqref{reg_n1} we can apply Proposition~\ref{T1}(b) to get
  		\begin{equation}\label{cl_epi_epi_cl}
  			\overline{\left(\bigcap_{i\in I} \epi g_i\right)\cap  \big(\R^n\times (-\infty,0]\big)}=\left(\bigcap_{i\in I}\overline{\epi g_i}\right)\cap\big(\R^n\times (-\infty,0]\big).
  		\end{equation}
  		By~\eqref{Omega1_epi_A} and Proposition~\ref{T1}(a), we have
  		$$\overline{\Omega_1}= \overline{A\left(\Big(\bigcap_{i\in I} \epi g_i\Big)\cap  \big(\R^n\times (-\infty,0]\big)\right)}=\overline{A\left(\overline{\Big(\bigcap_{i\in I} \epi g_i\Big)\cap  \big(\R^n\times (-\infty,0]\big)}\right)}.$$
  		Combining this with~\eqref{cl_epi_epi_cl} gives
  		$$\overline{\Omega_1}= \overline{A\left(\Big(\bigcap_{i\in I}\overline{\epi g_i}\Big)\cap\big(\R^n\times (-\infty,0]\big)\right)}.$$
  		It then follows from Lemma~\ref{lem:bar_f} that
  		\begin{equation}\label{bar_Omega1}\begin{array}{rcl}
  		\overline{\Omega_1}&=&\overline{A\left(\Big(\bigcap_{i\in I} \epi \bar g_i\Big)\cap\big(\R^n\times (-\infty,0]\big)\right)}\\
  			&=&\overline{\big\{x\in\R^n\mid \bar g_i(x)\leq 0\ \,\mbox{for }\, i\in I\big\}}.
  			\end{array}
  		\end{equation}
  		 Since $\bar g_1,\ldots,\bar g_m$ are lower semicontinuous functions, the set $$\big\{x\in\R^n\mid \bar g_i(x)\leq 0\ \,\mbox{for }\, i\in I\big\},$$ which is the intersection of $m$ sublevel sets of those functions, is closed. Hence,  from~\eqref{bar_Omega1} we obtain~\eqref{cl_omega_1}. $\hfill\Box$
  	  \end{proof}

  \begin{lemma}\label{lem4.3}
  	Under the assumptions of Lemma~\ref{nc_funct}, if the problem~\eqref{optim-1} with~$D$ being given by~\eqref{D_intersection} satisfies the generalized Slater condition stated in Definition~\ref{Slater}, then 
  		\begin{equation}\label{barD_cap}
  			\bar D=\overline{\Omega_0}\cap\overline{\Omega_1}
  		\end{equation} and, for every $\bar x\in\bar D$,
  		\begin{equation}\label{barD_normal}
  		 N(\bar x;\bar D)= N\big(\bar x;\overline{\Omega_0}\big)+N\big(\bar x;\overline{\Omega_1}\big).
  		\end{equation}
  \end{lemma}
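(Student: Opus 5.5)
The plan is to reduce both claims to the intersection results for nearly convex sets, Proposition~\ref{T1}(b) and Proposition~\ref{NCI}. Both need a common relative interior point of $\Omega_0$ and $\Omega_1$, and we will also need $\Omega_1$ itself to be nearly convex.

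\emph{Step 1: near convexity of $\Omega_1$ and a relative interior point.} Define $g$ by~\eqref{MF-1} and let $x^0$ be the Slater point. As in the proof of Lemma~\ref{nc_funct}, $g$ is nearly convex, $x^0\in{\rm ri}(\dom g)$, and $g(x^0)<0$. The representation~\eqref{Omega1_epi_A}, $\Omega_1=A\big((\epi g)\cap(\R^n\times(-\infty,0])\big)$, together with Proposition~\ref{NCI} and Proposition~\ref{T1}(a), shows that $\Omega_1$ is nearly convex; the required relative interior point is $(x^0,\lambda_0)$ with $g(x^0)<\lambda_0<0$. Proposition~\ref{T1}(a) also gives
\[
\ri\Omega_1=A\Big(\ri\big((\epi g)\cap(\R^n\times(-\infty,0])\big)\Big).
\]
By Proposition~\ref{T1}(b) and Proposition~\ref{riepi}, this intersection has relative interior
\[
\{(x,\lambda)\mid x\in{\rm ri}(\dom g),\ g(x)<\lambda<0\}.
\]
Hence $\ri\Omega_1=\{x\in{\rm ri}(\dom g)\mid g(x)<0\}$, which contains $x^0$. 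Since also $x^0\in\ri\Omega_0$, we obtain $\ri\Omega_0\cap\ri\Omega_1\neq\emptyset$.

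\emph{Step 2: the closure formula~\eqref{barD_cap}.} Apply Proposition~\ref{T1}(b) to the nearly convex sets $\Omega_0,\Omega_1$ using the point from Step~1. This gives $\overline{\Omega_0\cap\Omega_1}=\overline{\Omega_0}\cap\overline{\Omega_1}$, that is, $\bar D=\overline{\Omega_0}\cap\overline{\Omega_1}$.

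\emph{Step 3: the normal cone formula~\eqref{barD_normal}.} The sets $\overline{\Omega_0}$ and $\overline{\Omega_1}$ are closed convex, hence nearly convex. By \cite[Proposition~2.1]{nty1}, $\ri\overline{\Omega_i}=\ri\Omega_i$ for $i=0,1$, so $x^0\in\ri\overline{\Omega_0}\cap\ri\overline{\Omega_1}$. Proposition~\ref{NCI} then yields~\eqref{barD_normal} for every $\bar x\in\overline{\Omega_0}\cap\overline{\Omega_1}=\bar D$. (Alternatively, one can invoke the classical convex normal cone intersection rule \cite[Corollary~23.8.1]{r}.)

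The main obstacle is Step~1: showing that $x^0\in\ri\Omega_1$, rather than merely $x^0\in\Omega_1$. This is where Proposition~\ref{riepi} and the relative interior formulas for images and intersections must be combined carefully. A possible fallback is to use Lemma~\ref{lem4.2} to write $\overline{\Omega_1}$ as a sublevel set of the convex function $\max_i\bar g_i$. One would then use the Slater point to see that $x^0$ lies in the relative interior of that convex set, via Lemma~\ref{eq_in_ridom}, which gives $\bar g_i(x^0)=g_i(x^0)<0$, and the continuity of convex functions on relative interiors of their domains.
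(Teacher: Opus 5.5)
Your proposal is correct and follows essentially the same route as the paper: write $\Omega_1$ as the projection of $(\epi g)\cap(\R^n\times(-\infty,0])$ to get its near convexity and $x^0\in{\rm ri}\,\Omega_1$, then apply Proposition~\ref{T1}(b) for the closure formula and Proposition~\ref{NCI} to the closures (using ${\rm ri}\,\overline{\Omega_i}={\rm ri}\,\Omega_i$) for the normal cone formula. Your explicit computation ${\rm ri}\,\Omega_1=\{x\in{\rm ri}(\dom g)\mid g(x)<0\}$ fills in the step the paper only calls ``easily shown,'' so the fallback via Lemma~\ref{lem4.2} is not needed.
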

  \begin{proof} Let $g$ be defined as in~\eqref{MF-1}, and let $A:\R^n\times\R\to\R^n$ be the linear mapping introduced in the proof of Lemma~\ref{nc_funct}. By~\eqref{constraint-1}, we have
  		\begin{equation}\label{rep_Omega1}
  		\Omega_1
  		=
  		A\Big((\epi g)\cap\big(\R^n\times (-\infty,0]\big)\Big).
  		\end{equation}
  		As shown in the proof of Lemma~\ref{nc_funct}, the function $g$ is nearly convex. Consequently, the set
  		$(\epi g)\cap\big(\R^n\times (-\infty,0]\big)$,
  		being the intersection of nearly convex sets and the condition  $\Big(\mbox{\rm ri}(\epi g)\Big)\cap \mbox{\rm ri}\, \Big(\Omega_0\times (-\infty,0]\Big)\neq\emptyset$ is satisfied (see~\eqref{Slater_1}), is itself nearly convex. Applied to~\eqref{rep_Omega1}, Proposition~\ref{T1}(a) therefore implies that $\Omega_1$ is nearly convex.
  		
  		Using Proposition~\ref{T1}(a),~\eqref{Slater_1} and~\eqref{rep_Omega1}, one can easily show that $x^0\in \ri \Omega_1$. Hence, $x^0\in \ri \Omega_0 \cap \ri \Omega_1.$
  		Therefore, since both sets $\Omega_0$ and $\Omega_1$ are nearly convex, we have
  		\begin{equation}\label{regu_2}
  			\ri \overline{\Omega_0}\cap \ri \overline{\Omega_1}
  			=
  			\ri \Omega_0\cap \ri \Omega_1
  			\neq \emptyset.
  		\end{equation}
  	 Hence, by Proposition~\ref{T1}(b) we get
  		$$\bar D=\overline{\Omega_0\cap\Omega_1}=\overline{\Omega_0}\cap\overline{\Omega_1}.$$ This shows that the equality~\eqref{barD_cap} is valid. Moreover, thanks to~\eqref{regu_2}, we can apply Proposition~\ref{NCI} to  deduce~\eqref{barD_normal}, where $\bar x\in\bar D$ is arbitrarily chosen, from~\eqref{barD_cap}.
  		   $\hfill\Box$
  	\end{proof}

   To obtain an explicit formula for the normal cone to $\overline{\Omega_1}$ at a point belonging to that set, we will use Proposition~\ref{NCMf} and the following classical result on normal cones to sublevel sets of a convex function.
   
   \begin{lemma}\label{lem4.4}\label{normal_sublevel_set} {\rm (See~\cite[Proposition~2, p.~206]{IT_1979})} Let $\psi:\R^n\to\bar\R$ be a proper convex function and $\bar x\in\R^n$ be such that $\psi(\bar x)=0$. If $\psi$ is continuous at $\bar x$ and there exists a point $x^0$ satisfying $\psi(x^0)<0$, then	$N\left(\bar x;\Omega\right)= {\rm cone}\,\partial \psi(\bar x)$,
   	where $$\Omega=\big\{x\in\R^n\mid \psi(x)\leq 0\big\}.$$ 
   \end{lemma}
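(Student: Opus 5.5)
The statement is the classical formula $N(\bar x;\Omega)={\rm cone}\,\partial\psi(\bar x)$ for the normal cone to the sublevel set $\Omega=\{x\mid \psi(x)\le 0\}$. I would prove the two inclusions separately, using only convex analysis for the proper convex function $\psi$.

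\emph{The inclusion ${\rm cone}\,\partial\psi(\bar x)\subset N(\bar x;\Omega)$.} Take $x^*\in\partial\psi(\bar x)$, $\lambda\ge 0$ and $x\in\Omega$. Then
$$\langle\lambda x^*,x-\bar x\rangle\le\lambda\big(\psi(x)-\psi(\bar x)\big)=\lambda\psi(x)\le 0,$$
so $\lambda x^*\in N(\bar x;\Omega)$. This step needs neither continuity nor the Slater point.

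\emph{The inclusion $N(\bar x;\Omega)\subset{\rm cone}\,\partial\psi(\bar x)$.} Let $x^*\in N(\bar x;\Omega)$ with $x^*\ne 0$; the case $x^*=0$ is trivial because $\partial\psi(\bar x)\neq\emptyset$ by continuity. I would show that $\bar x$ minimizes $\langle -x^*,x\rangle$ subject to $\psi(x)\le 0$. Apply the Kuhn--Tucker/Lagrange theorem for convex problems under the Slater condition $\psi(x^0)<0$. Equivalently, separate the convex sets
$$A=\{(\alpha,\beta)\mid \exists x:\ \psi(x)<\beta,\ \langle x^*,x-\bar x\rangle>\alpha\}\quad\text{and}\quad B=\{(\alpha,\beta)\mid \alpha\ge 0,\ \beta\le 0\}.$$
These are disjoint by normality. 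The separation gives $\mu_0,\mu\ge 0$, not both zero, such that
$$\mu_0\langle x^*,x-\bar x\rangle\le\mu\,\psi(x)\quad\text{for all }x\in\dom\psi.$$
The Slater point excludes $\mu=0$: otherwise $\mu_0>0$ and $\langle x^*,x-\bar x\rangle\le 0$ on all of $\dom\psi$. But $\bar x\in{\rm int}(\dom\psi)$, which forces $x^*=0$, a contradiction.

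If $\mu_0=0$, then $\mu\psi(x)\ge 0$ with $\mu>0$, which contradicts $\psi(x^0)<0$. So both multipliers are positive, and dividing gives
$$\Big\langle \tfrac{\mu_0}{\mu}x^*,x-\bar x\Big\rangle\le\psi(x)-\psi(\bar x).$$
Hence $x^*\in\tfrac{\mu}{\mu_0}\partial\psi(\bar x)\subset{\rm cone}\,\partial\psi(\bar x)$.

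\emph{Main obstacle.} The delicate point is the separation argument: checking that $A$ is convex with nonempty interior, and that the multiplier $\mu_0$ attached to $x^*$ cannot vanish. The argument for $\mu\ne 0$ uses continuity of $\psi$ at $\bar x$ (interiority of the domain), and the argument for $\mu_0\ne 0$ uses the Slater point. One could alternatively cite the standard convex Lagrange multiplier theorem directly, but I would carry out the separation explicitly.
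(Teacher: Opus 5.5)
Your proof is correct. The paper gives no argument of its own for this lemma; it simply quotes it from Ioffe--Tihomirov. Your separation argument therefore serves as a self-contained justification rather than as an alternative to a proof in the text.

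In effect you prove the Kuhn--Tucker theorem for the single convex constraint $\psi(x)\le 0$ with the linear objective $\langle -x^*,x\rangle$, and the details check out:
\begin{itemize}
\item Because $B$ is the quadrant $\{\alpha\ge0,\ \beta\le0\}$, the separating functional must have the form $(\alpha,\beta)\mapsto -\mu_0\alpha+\mu\beta$ with $\mu_0,\mu\ge0$, and its supremum on $B$ is $0$.
\item Letting $\alpha\uparrow\langle x^*,x-\bar x\rangle$ and $\beta\downarrow\psi(x)$ then gives your inequality on $\dom\psi$. Off $\dom\psi$ the subgradient inequality holds trivially.
\item In $\R^2$ you do not need nonempty interior of $A$ for the separation, although $A$ is in fact open.
\end{itemize}

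There is one wording slip. The sentence ``The Slater point excludes $\mu=0$'' attributes the step to the wrong hypothesis. As your closing paragraph correctly says, $\mu=0$ is ruled out by $\bar x\in{\rm int}(\dom\psi)$, which comes from continuity. The Slater point is what rules out $\mu_0=0$.

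A common textbook route, which may or may not be the one in the cited source, goes through polarity. One identifies the tangent cone to $\Omega$ at $\bar x$ with $\{h\mid \psi'(\bar x;h)\le 0\}$, using the Slater point to get a strict descent direction. Then one uses $\psi'(\bar x;\cdot)=\max_{x^*\in\partial\psi(\bar x)}\langle x^*,\cdot\rangle$ and takes polars. Compared with that, your argument produces each normal $x^*$ directly as a positive multiple of a subgradient. So you never need the separate fact that ${\rm cone}\,\partial\psi(\bar x)$ is closed. In the polarity route that fact must be checked, and it follows from compactness of $\partial\psi(\bar x)$ together with $0\notin\partial\psi(\bar x)$. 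The polarity route, in turn, gives the tangent-cone formula as a by-product.
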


\begin{lemma}\label{lem4.5}
	Under the assumptions of Lemma~\ref{lem4.2}, if $\bar x\in \overline{\Omega_1}$ is such that $\bar g_i(\bar x)=0$ for at least one index $i\in\{1,\ldots,m\}$ and the functions $\bar g_1,\ldots,\bar g_m$ are continuous at $\bar x$, then
	\begin{equation}\label{nc_cl_omega_1}
		N\big (\bar x;\overline{\Omega_1}\big )=\left\{\sum_{i\in  I(\ox)}\lambda_ix_i^*\mid \lambda_i\geq 0\ \; {\rm and}\ \; x_i^*\in \partial \bar g_i(\ox)\ \,  {\rm for}\ \, i\in I(\ox)\right\}
	\end{equation}	
	with $I(\ox):=\big\{i=1, \ldots, m\mid \bar g_i(\ox)=0\big\}$ and $\Omega_1$ being defined by~\eqref{constraint}.
\end{lemma}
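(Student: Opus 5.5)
We reduce the statement to Lemma~\ref{lem4.4} applied to the maximum function $\psi:=\max\{\bar g_i\mid i=1,\ldots,m\}$. By Lemma~\ref{lem4.2}, $\overline{\Omega_1}=\{x\in\R^n\mid \psi(x)\le 0\}$. Each $\bar g_i$ is proper and convex by Lemma~\ref{lem:bar_f}, since $g_i$ is proper and nearly convex. Hence $\psi$ is convex, and it is proper: $\psi(x)>-\infty$ everywhere, and the assumed continuity of the $\bar g_i$ at $\bar x$ gives $\bar x\in\bigcap_i{\rm int}(\dom\bar g_i)=\dom\psi$ near $\bar x$. Moreover, $\psi$ is continuous at $\bar x$, because a maximum of finitely many functions continuous at $\bar x$ is continuous there.

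Next we verify the remaining hypotheses of Lemma~\ref{lem4.4}. Since $\bar x\in\overline{\Omega_1}$, we have $\bar g_i(\bar x)\le 0$ for all $i$, and $\bar g_i(\bar x)=0$ for some $i$; hence $\psi(\bar x)=0$ and $I(\bar x)=\{i\mid \bar g_i(\bar x)=\psi(\bar x)\}$. For the Slater point, the hypotheses of Lemma~\ref{lem4.2} give $x^0\in\bigcap_i{\rm ri}(\dom g_i)$ with $g_i(x^0)<0$. By Lemma~\ref{eq_in_ridom}, $\bar g_i(x^0)=g_i(x^0)<0$, so $\psi(x^0)<0$. Lemma~\ref{lem4.4} then yields $N(\bar x;\overline{\Omega_1})={\rm cone}\,\partial\psi(\bar x)$.

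It remains to compute $\partial\psi(\bar x)$ via Proposition~\ref{diff_max}, applied to the proper convex (hence nearly convex) functions $\bar g_i$, all continuous at $\bar x$. This gives $\partial\psi(\bar x)=\co\big[\bigcup_{i\in I(\bar x)}\partial\bar g_i(\bar x)\big]$. Finally we show that the cone generated by this convex hull equals the right-hand side of~\eqref{nc_cl_omega_1}.

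\emph{Inclusion ``$\subset$''.} A nonnegative multiple of a convex combination $\sum_i t_i x_i^*$ with $x_i^*\in\partial\bar g_i(\bar x)$ is of the form $\sum_{i\in I(\bar x)}\lambda_ix_i^*$ with $\lambda_i=\lambda t_i\ge0$. Convex combinations that repeat an index $i$ are merged using the convexity of $\partial\bar g_i(\bar x)$.

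\emph{Inclusion ``$\supset$''.} Given $\sum\lambda_ix_i^*$ with $\lambda_i\ge 0$, set $s=\sum\lambda_i$. If $s>0$, write the vector as $s\cdot\sum(\lambda_i/s)x_i^*$. If $s=0$, the vector is $0$, which lies in the cone. Both cases need nonemptiness of $\partial\bar g_i(\bar x)$, which holds since $\bar g_i$ is convex and continuous at $\bar x$, so that $\bar x\in{\rm int}(\dom\bar g_i)$.

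The main obstacle is this bookkeeping in the last step, namely handling repeated indices and the case $s=0$, together with confirming that Proposition~\ref{diff_max} applies to the $\bar g_i$ rather than the $g_i$. The latter is harmless, since convex functions are nearly convex.
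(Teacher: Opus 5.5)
Your proposal is correct and follows essentially the same route as the paper. You pass to $\psi=\max_i\bar g_i$, use Lemma~\ref{lem4.2} to write $\overline{\Omega_1}=\{\psi\le 0\}$, combine Lemma~\ref{lem4.4} with Proposition~\ref{diff_max} to get $N(\bar x;\overline{\Omega_1})={\rm cone}\,\co\big[\bigcup_{i\in I(\bar x)}\partial\bar g_i(\bar x)\big]$, and then identify this cone with the set of nonnegative combinations by merging repeated indices; your explicit checks that $\psi(x^0)<0$ (via Lemma~\ref{eq_in_ridom}) and that $\partial\bar g_i(\bar x)\neq\emptyset$ (needed to assign zero weight to active indices absent from a convex combination) make precise two points the paper leaves implicit.
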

\begin{proof} Let $\psi:\R^n\to\bar\R$ be the function defined by setting
	 \begin{equation}\label{psi}
		\psi(x)=\max\big\{\bar g_i(x)\mid  i=1, \ldots, m\big\},\ \; x\in \R^n.
	\end{equation} 
	By the assumptions made,  $\psi(\bar x)=0$ and there exists a point $x^0\in \bigcap\limits_{i=1}^m\mbox{\rm ri}(\dom g_i)$ with $\psi(x^0)<0$. Moreover, since $g_1,\ldots, g_m$ are proper nearly convex functions, the functions $\bar g_1,\ldots,\bar g_m$ are convex. Hence, $\psi$ is a proper convex function. 
	
	Since the functions $\bar g_1,\ldots,\bar g_m$ are continuous at $\bar x$, using~\eqref{psi} one can easily show that $\psi$ is continuous at $\bar x$. Clearly, by~\eqref{cl_omega_1} and~\eqref{psi} we have $$\overline{\Omega_1}=\big\{x\in\R^n\mid \psi(x)\leq 0\big\}.$$
	
Applying Proposition~\ref{diff_max} to the maximum function $\psi$ yields
	\begin{equation*}
		\partial\psi (\ox)=\co \big[\bigcup_{i\in I(\ox)}\partial \bar g_i(\ox)\big].
	\end{equation*}
Furthermore, as $\psi$ is continuous at $\bar x$, 	by Lemma~\ref{lem4.4} and the last equality we obtain
		\begin{equation}\label{nc_repre}
			N\left(\bar x;\overline{\Omega_1}\right)= {\rm cone}\,\partial \psi(\bar x)= {\rm cone}\left(\co \big[\bigcup_{i\in I(\ox)}\partial \bar g_i(\ox)\big]\right).
		\end{equation}
	We have
	\begin{equation}\label{cone_co}{\rm cone}\left(\co \big[\bigcup_{i\in I(\ox)}\partial \bar g_i(\ox)\big]\right)= \left\{\sum_{i\in  I(\ox)}\lambda_ix_i^*\mid \lambda_i\geq 0\ \; {\rm and}\ \; x_i^*\in \partial \bar g_i(\ox)\ \,  {\rm for}\ \, i\in I(\ox)\right\}.\end{equation}
	Indeed, take any $x\in {\rm cone}\left(\co \big[\bigcup\limits_{i\in I(\ox)}\partial \bar g_i(\ox)\big]\right)$. Then, one can find  $\lambda \geq 0$ and $y\in \co \big[\bigcup\limits_{i\in I(\ox)}\partial \bar g_i(\ox)\big]$ such that $x=\lambda y$. Since $y\in \co \big[\bigcup\limits_{i\in I(\ox)}\partial \bar g_i(\ox)\big]$, there exist
	 $$p\in \N,\ \ \lambda_k\in [0,1],\ \, k=1,\ldots,p,\ \ \sum_{k=1}^p \lambda_k=1,$$
	 and,  for every $k\in \{1,\ldots,p\}$, $x^*_k\in \partial \bar g_{i_k}(\bar x)$ with $i_k\in I(\bar x)$ , such that  $y= \sum\limits_{k=1}^p\lambda_k x^*_k.$
	 Thus, $$x	=\lambda y=  \sum_{k=1}^p (\lambda\lambda_k) x^*_k.$$ For each $i\in I(\bar x)$, let
	 $K_i= \big\{k\in\{1,\ldots, p\}\mid i_k=i\big\}.$
	 Then, 
	 $$x=  \sum_{i\in I(\bar x)}\sum_{k\in K_i} (\lambda\lambda_k) x^*_k.$$
	 For each $i\in I(\bar x)$, put $\lambda_i = \sum\limits_{k\in K_i} \big(\lambda\lambda_k\big)$. If $\lambda_i=0$,  take any $x^*_i\in \partial \bar g_i(\bar x)$. If $\lambda_i>0$, define $$x^*_i = \dfrac{1}{\lambda_i}\sum_{k\in K_i} (\lambda\lambda_k) x^*_k$$
	 and get by the convexity of $\partial g_i(\bar x)$ that $x^*_i\in \partial \bar g_i(\bar x).$
	 So, $x= \sum\limits_{i\in I(\bar x)}\lambda_i x^*_i$ 	with $\lambda_i \geq 0$ and $x^*_i\in\partial\bar g_i(\bar x)$ for all $i\in I(\bar x)$. This proves that ${\mathcal A}\subset {\mathcal B}$, where ${\mathcal A}$ denotes the left-hand side of~\eqref{cone_co} and ${\mathcal B}$ stands for the right-hand side of~\eqref{cone_co}. To show that ${\mathcal B}\subset {\mathcal A}$, take any $x\in {\mathcal B}$. Then, $x= \sum\limits_{i\in I(\bar x)}\lambda_i x^*_i$ with $\lambda_i\geq 0$ and $x^*_i\in\partial \bar g_i(\bar x)$ for $i\in I(\ox)$. If $x=0$, the inclusion $x\in {\mathcal A}$ is trivial. If $x\neq 0$, then $\lambda := \sum\limits_{i\in I(\bar x)}\lambda_i$ is a positive real number. Set
	 $ y=\sum\limits_{i\in I(\bar x)}\dfrac{\lambda_i}{\lambda} x^*_i$  and
	 observe that $y\in \co \big[\bigcup\limits_{i\in I(\ox)}\partial \bar g_i(\ox)\big]$. Since $x=\lambda y$, the latter implies that $x\in {\mathcal A}$. Hence, ${\mathcal B}\subset {\mathcal A}$. Thus, the identity~\eqref{cone_co} is valid.
	 	 
	 Combining~\eqref{cone_co} with~\eqref{nc_repre}, we obtain~\eqref{nc_cl_omega_1} and complete the proof.
 $\hfill\Box$
\end{proof}

 \begin{theorem}\label{Lagrange_associated} {\rm \textbf{(Kuhn-Tucker conditions for the associated convex optimization problem)}} Suppose that $f$ is a proper nearly convex function, $\Omega_0$ is a nearly convex set, and  $g_1,\ldots, g_m$  are proper nearly convex functions. Assume that the problem~\eqref{optim-1}, where $D$ is given by~\eqref{D_intersection}, satisfies the generalized Slater condition in the sense of Definition~\ref{Slater}, where $x^0\in{\rm ri}(\dom f)$, and the functions $\bar g_1,\ldots, \bar g_m$ are continuous at a point $\bar x\in\bar D$. Then, $\bar x$ is a solution of the associated convex optimization problem~\eqref{optim-2} if and only if there exist Lagrange multipliers $\lambda_1\geq 0,\ldots,\lambda_m\geq 0$, such that
 	\begin{equation}\label{incl_bar}
 	0\in \partial \bar f(\bar x)+\sum\limits_{i=1}^m \lambda_i\partial \bar g_i(\bar x) + N(\bar x;\overline{\Omega_0})
 	\end{equation} and $\lambda_i\bar g_i(\bar x)=0$ for $i=1, \ldots, m$.
 \end{theorem}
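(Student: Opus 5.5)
The plan is to reduce everything to the Fermat rule for the associated convex problem, Theorem~\ref{Fermat1}(b), and then split the normal cone $N(\bar x;\bar D)$ with Lemmas~\ref{lem4.3} and~\ref{lem4.5}.

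First I check the hypotheses. By Lemma~\ref{nc_funct}, the problem is nearly convex. Regularity~\eqref{reg1} requires $(\ri D)\cap{\rm ri}(\dom f)\neq\emptyset$. In the proof of Lemma~\ref{lem4.3} it is shown that $x^0\in\ri\Omega_0\cap\ri\Omega_1$. Hence, by Proposition~\ref{T1}(b), $x^0\in\ri(\Omega_0\cap\Omega_1)=\ri D$, and $x^0\in{\rm ri}(\dom f)$ by assumption. So~\eqref{reg1} holds, and Theorem~\ref{Fermat1}(b) gives: $\bar x\in\bar D$ solves~\eqref{optim-2} iff $0\in\partial\bar f(\bar x)+N(\bar x;\bar D)$. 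Lemma~\ref{lem4.3} then rewrites $N(\bar x;\bar D)=N(\bar x;\overline{\Omega_0})+N(\bar x;\overline{\Omega_1})$.

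Next I compute $N(\bar x;\overline{\Omega_1})$. By Lemma~\ref{lem4.2}, $\overline{\Omega_1}=\{x\mid \bar g_i(x)\le 0,\ i=1,\dots,m\}$; its hypotheses follow from the Slater point $x^0$. Since $\bar x\in\bar D\subset\overline{\Omega_1}$, we have $\bar g_i(\bar x)\le 0$ for all $i$. There are two cases.

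\emph{Case $I(\bar x)\neq\emptyset$.} Lemma~\ref{lem4.5}, using continuity of the $\bar g_i$ at $\bar x$, gives $N(\bar x;\overline{\Omega_1})=\{\sum_{i\in I(\bar x)}\lambda_i x_i^*\mid \lambda_i\ge0,\ x_i^*\in\partial\bar g_i(\bar x)\}$.

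\emph{Case $I(\bar x)=\emptyset$.} Then $\psi=\max_i\bar g_i$ is continuous at $\bar x$ with $\psi(\bar x)<0$. Hence $\bar x\in{\rm int}\,\overline{\Omega_1}$, so $N(\bar x;\overline{\Omega_1})=\{0\}$, which matches the empty sum.

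In both cases I set $\lambda_i=0$ for $i\notin I(\bar x)$, which yields $\lambda_i\bar g_i(\bar x)=0$ for all $i$. One caveat: for $i\notin I(\bar x)$ the expression $\lambda_i\partial\bar g_i(\bar x)$ with $\lambda_i=0$ must equal $\{0\}$, which needs $\partial\bar g_i(\bar x)\ne\emptyset$. This holds because $\bar g_i$ is convex and continuous at $\bar x$, so $\bar x\in{\rm int}(\dom\bar g_i)$ and the subdifferential is nonempty.

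Combining the pieces, $0\in\partial\bar f(\bar x)+N(\bar x;\bar D)$ is equivalent to the existence of $\lambda_i\ge0$ with complementary slackness such that~\eqref{incl_bar} holds. The converse direction runs the same equalities backwards: complementary slackness forces $\lambda_i=0$ outside $I(\bar x)$, so $\sum_i\lambda_i\partial\bar g_i(\bar x)\subset N(\bar x;\overline{\Omega_1})$. The main obstacle is the bookkeeping around inactive constraints and nonemptiness of subdifferentials, together with confirming $x^0\in\ri D$ so that~\eqref{reg1} holds.
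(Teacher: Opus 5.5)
Your proposal is correct and follows the paper's proof essentially step by step. You verify \eqref{reg1} through the Slater point, going via $x^0\in\ri\Omega_0\cap\ri\Omega_1$ and Proposition~\ref{T1}(b), whereas the paper uses the projection representation~\eqref{constraint-2}; the rest is the same chain of Theorem~\ref{Fermat1}(b), Lemma~\ref{lem4.3}, and the case split $I(\bar x)=\emptyset$ (interiority) versus $I(\bar x)\neq\emptyset$ (Lemma~\ref{lem4.5}). Your remark that $\partial\bar g_i(\bar x)\neq\emptyset$ for inactive indices, so that $0\cdot\partial\bar g_i(\bar x)=\{0\}$, is a detail the paper leaves implicit, and continuity of the convex function $\bar g_i$ at $\bar x$ justifies it correctly.
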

  \begin{proof} First, let us deduce from the results and arguments of the proof of Lemma~\ref{nc_funct} some useful facts. By the assumptions made, the inclusion~\eqref{x0} is satisfied and  $g_i(x^0) < 0$ for all $i\in I$, where $I:=\{1, \ldots, m\}$. Using the maximum function $g$ given by~\eqref{MF-1}, we can represent the constraint set $D$ by~\eqref {constraint-2}, that is
  		\begin{equation*}D=A\Big(\big(\Omega_0\times (-\infty,0]\big)\cap  (\epi g)\Big)
  		\end{equation*}
  		with the linear mapping $A:\R^n\times\R\to\R^n$ being given by the formula $A(x,\mu)=x$ for every $(x,\mu)\in \R^n\times\R$. Since $\Omega_0$ is a nearly convex set and $g$ is a nearly convex function,  $\Omega_0\times (-\infty,0]$ and $\epi g$ are nearly convex sets. For an arbitrarily chosen value $\lambda_0\in \big(g(x^0),0\big)$, we have~\eqref{Slater_1}, that is
  		\begin{equation*} (x^0,\lambda_0)\in \mbox{\rm ri}\, \Big(\Omega_0\times (-\infty,0]\Big)\cap \Big(\mbox{\rm ri}(\epi g)\Big).
  		\end{equation*} So, applying Proposition~\ref{T1} to the above representation of $D$ yields not only the near convexity of $D$ but also the inclusion $x^0\in\ri D$. As $x^0\in\dom f$ by our assumptions, it follows that $x^0\in(\ri D)\cap (\dom f)$. So, the regularity condition~\eqref{reg1} is fulfilled.  
  		
  		 Now, it is clear that we can use the Fermat rule for the associated convex problem in Theorem~\ref{Fermat1} to assert that the given point $\bar x\in\bar D$ is a solution of~\eqref{optim-2} if and only if it satisfies the inclusion~\eqref{optimality-2b}.  Furthermore, according to Lemma~\ref{lem4.3}, formulas~\eqref{barD_cap} and~\eqref{barD_normal} hold. Hence, the inclusion~\eqref{optimality-2b} can be rewritten equivalently as
  		 \begin{equation}\label{optimality-2c}
  		 	0\in\partial \bar f(\bar x)+ N\big(\bar x;\overline{\Omega_0}\big)+N\big(\bar x;\overline{\Omega_1}\big).
  		 	\end{equation} 
        There are two situations: (a) $\bar g_i(\bar x)<0$ for all $i\in I$; (b) There exists at least one index $i\in I$ such that $\bar g_i(\bar x)=0$.
  	 	
  	 	If the situation~(a) appears, the continuity of the functions $\bar g_1,\ldots, \bar g_m$ are continuous at a point $\bar x$ implies that $\bar x\in {\rm int}\,\overline{\Omega_1}$. Then, $N\big(\bar x;\overline{\Omega_1}\big)=\{0\}$. Choosing $\lambda_i=0$ for all $i\in I$, we see that~\eqref{optimality-2c} is equivalent to~\eqref{incl_bar}. Note that the complementarity slackness condition, which requires $\lambda_ig_i(\bar x)=0$ for all $i\in I$, holds. Thus, the conclusion of the theorem is valid.
  	 	
  	 	If the situation~(b) occurs, then by Lemma~\ref{lem4.5} we can compute the normal cone $N\big(\bar x;\overline{\Omega_1}\big)$ by formula~\eqref{nc_cl_omega_1}. Set $\lambda_i=0$ for all $i\in I\setminus I(\bar x)$. Then, substituting the right-hand side of~\eqref{nc_cl_omega_1} for $N\big(\bar x;\overline{\Omega_1}\big)$ in~\eqref{optimality-2c}, we see that~\eqref{optimality-2c} is equivalent to~\eqref{incl_bar}. Clearly, the complementarity slackness condition is fulfilled. We have shown that the conclusion of the theorem holds.
  		 
  	  	The proof is complete. 	$\hfill\Box$
  	\end{proof}

 \begin{theorem}\label{Lagrange_NC} {\rm \textbf{(Lagrange multiplier rule for the original nearly convex optimization problem)}}  Suppose that all the assumptions of Theorem~\ref{Lagrange_associated} are satisfied and $\bar x \in D\cap (\dom f)$ is such that $\bar f(\bar x)=f(\bar x)$ and $\bar g_i(\bar x)=g_i(\bar x)$ for all $i=1,\ldots,m$. Then, $\bar x$ is a solution of the optimization problem under functional constraints given by~\eqref{optim-1} and~\eqref{D_intersection}, then there exist Lagrange multipliers $\lambda_1\geq 0,\ldots,\lambda_m\geq 0$, such that
 		\begin{equation}\label{incl_original} 0\in \partial f(\bar x)+\sum\limits_{i=1}^m \lambda_i\partial g_i(\bar x)  + N(\bar x;\Omega_0)\end{equation} and $\lambda_ig_i(\bar x)=0$ for $i=1, \ldots, m$. 
 	Conversely, if the given point $\bar x$ belongs to ${\rm ri}(\dom f)$ and there exist Lagrange multipliers $\lambda_1\geq 0,\ldots,\lambda_m\geq 0$ satisfying these conditions, then it is a solution of the optimization problem given by~\eqref{optim-1} and~\eqref{D_intersection}.
 \end{theorem}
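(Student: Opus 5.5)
The plan is to reduce both directions to Theorem~\ref{Lagrange_associated}, using Corollaries~\ref{Sols} and~\ref{Sols_1} to move between $\mathcal S$ and $\mathcal S_1$. The only new ingredient is a transfer lemma. At a point where the function and its lower semicontinuous hull coincide and are finite, the two subdifferentials coincide. In addition, the normal cones to a set and to its closure coincide.

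First I would record the facts needed. As in the first paragraph of the proof of Theorem~\ref{Lagrange_associated}, the generalized Slater condition with $x^0\in{\rm ri}(\dom f)$ yields $x^0\in(\ri D)\cap{\rm ri}(\dom f)$. Hence the regularity condition~\eqref{reg1} holds, and Corollaries~\ref{Sols} and~\ref{Sols_1} are available.

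\emph{Transfer lemma.} Let $h$ be proper and let $\bar x$ satisfy $\bar h(\bar x)=h(\bar x)\in\R$. I will show that $\partial h(\bar x)=\partial\bar h(\bar x)$.
\begin{itemize}
\item If $x^*\in\partial\bar h(\bar x)$, then $\bar h\le h$ gives $\langle x^*,x-\bar x\rangle\le \bar h(x)-\bar h(\bar x)\le h(x)-h(\bar x)$ for all $x$. So $x^*\in\partial h(\bar x)$.
\item If $x^*\in\partial h(\bar x)$, then the affine function $x\mapsto h(\bar x)+\langle x^*,x-\bar x\rangle$ minorizes $h$. Taking $\liminf_{x\to y}$ and using Lemma~\ref{lem:bar_f_lsc}, it also minorizes $\bar h$. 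Since $h(\bar x)=\bar h(\bar x)$, this means $x^*\in\partial\bar h(\bar x)$.
\end{itemize}
I apply this to $f$, since $\bar x\in\dom f$ and $f$ is proper. I also apply it to each $g_i$: here $g_i(\bar x)\le 0$ because $\bar x\in D\subset\Omega_1$, and $g_i(\bar x)>-\infty$ by properness, so $g_i(\bar x)$ is finite. Finally, $N(\bar x;\Omega_0)=N(\bar x;\overline{\Omega_0})$ for $\bar x\in\Omega_0$. One inclusion holds because $\Omega_0\subset\overline{\Omega_0}$; the other follows by passing to limits in $\langle x^*,x-\bar x\rangle\le 0$. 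Consequently,~\eqref{incl_original} and~\eqref{incl_bar} are the same condition at $\bar x$. The complementarity conditions $\lambda_i g_i(\bar x)=0$ and $\lambda_i\bar g_i(\bar x)=0$ also coincide, because $g_i(\bar x)=\bar g_i(\bar x)$.

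\emph{Necessity.} Let $\bar x\in\mathcal S$. Corollary~\ref{Sols} gives $\bar x\in\mathcal S_1$, and $\bar x\in D\subset\bar D$. Theorem~\ref{Lagrange_associated} then provides $\lambda_i\ge0$ satisfying~\eqref{incl_bar} and $\lambda_i\bar g_i(\bar x)=0$. By the transfer facts above, these give~\eqref{incl_original} and $\lambda_i g_i(\bar x)=0$.

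\emph{Sufficiency.} Suppose $\bar x\in{\rm ri}(\dom f)$ and the multipliers satisfy~\eqref{incl_original} together with complementarity. By the transfer facts, \eqref{incl_bar} holds with $\lambda_i\bar g_i(\bar x)=0$. Theorem~\ref{Lagrange_associated} gives $\bar x\in\mathcal S_1$. Since $\bar x\in D\cap{\rm ri}(\dom f)$, Corollary~\ref{Sols_1} yields $\bar x\in\mathcal S$.

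The main point needing care is the transfer lemma, specifically the inclusion $\partial h(\bar x)\subset\partial\bar h(\bar x)$. This step depends on Lemma~\ref{lem:bar_f_lsc} and on finiteness of $h(\bar x)$. Everything else is direct bookkeeping.
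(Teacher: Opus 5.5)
Your proposal is correct and follows the paper's proof essentially step for step. You use the same three transfer identities, $\partial f(\bar x)=\partial\bar f(\bar x)$, $\partial g_i(\bar x)=\partial\bar g_i(\bar x)$ and $N(\bar x;\Omega_0)=N(\bar x;\overline{\Omega_0})$, proved the same way via Lemma~\ref{lem:bar_f_lsc} and a limiting argument. You then combine Corollary~\ref{Sols} with Theorem~\ref{Lagrange_associated} for necessity, and Theorem~\ref{Lagrange_associated} with Corollary~\ref{Sols_1} for sufficiency, exactly as the paper does.

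Your explicit check that $g_i(\bar x)$ is finite (from $\bar x\in\Omega_1$ and properness), so that $\partial g_i(\bar x)$ is defined, is a small detail the paper leaves implicit.
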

 
\begin{proof} Let $\bar x \in D\cap (\dom f)$ be such that $\bar f(\bar x)=f(\bar x)$ and $\bar g_i(\bar x)=g_i(\bar x)$ for all $i\in I$, where~$I$ is the same as in the proof of Theorem~\ref{Lagrange_associated}. To derive the desired results from Theorems~\ref{Lagrange_associated}, we need some auxiliary facts, which are to be established now.
	
 For the given point $\bar x$, we have 
 \begin{equation}
 	\label{sudiff_n1}
	\partial f(\bar x)=\partial \bar f(\bar x),
\end{equation} 
\begin{equation}\label{sudiff_gi} 
\partial g_i(\bar x)=\partial \bar g_i(\bar x)
\end{equation} for all  $i\in I$, and 
 \begin{equation}\label{normal_n1}
N(\bar x;\Omega_0)=N(\bar x;\overline{\Omega_0}).
\end{equation} 
To prove the equality~\eqref{sudiff_n1}, take any $x^*\in\partial \bar f(\bar x)$ and have $\langle x^*,x-\bar x\rangle\leq \bar f(x)-\bar f(\bar x)$ for all $x\in\R^n$. Then, by the condition $\bar f(\bar x)=f(\bar x)$ and the inequality $\bar f(x)\leq f(x)$, we get $\langle x^*,x-\bar x\rangle\leq f(x)-f(\bar x)$ for all $x\in\R^n$. So, $x^*\in\partial f(\bar x)$. We have thus proved that $\partial \bar f(\bar x)\subset\partial f(\bar x)$. Next, fix any element $x^*\in\partial f(\bar x)$ and have  
	 \begin{equation}\label{y}\langle x^*,y-\bar x\rangle\leq f(y)-f(\bar x)
\end{equation} for all $y\in\R^n$. For any $x\in\R^n$, by taking $\liminf$  on both sides of the inequality~\eqref{y} as $y$ tends to $x$ and using Lemma~\ref{lem:bar_f_lsc}, we get $\langle x^*,x-\bar x\rangle\leq \bar f(x)- f(\bar x)$. As $\bar f(\bar x)=f(\bar x)$, from this we can deduce that $x^*\in\partial \bar f(\bar x)$. Therefore, $\partial f(\bar x)\subset\partial \bar f(\bar x)$. So, the equality~\eqref{sudiff_n1} is valid.
	
	For each $i\in I$, the proof of the equality~\eqref{sudiff_gi} is similar to that of the equality~\eqref{sudiff_n1}.
	
	Since $\Omega_0\subset \overline{\Omega_0}$, the inclusion $N(\bar x;\overline{\Omega_0})\subset N(\bar x;\Omega_0)$ is obvious. Now, take any $x^*\in N(\bar x;\Omega_0)$ and have $\langle x^*,y-\bar x\rangle\leq 0$ for all $y\in\Omega_0$. For each $x\in \overline{\Omega_0}$, there is a sequence $\{y_k\}\subset\Omega_0$ such that $\lim\limits_{k\to\infty} y_k=x$. Then, $\langle x^*,y_k-\bar x\rangle\leq 0$ for all $k\in\N$. Passing the last inequality to the limit as $k\to\infty$ gives $\langle x^*,x-\bar x\rangle\leq 0$. As the last inequality is valid for all $x\in \overline{\Omega_0}$, we have $x^*\in N(\bar x;\overline{\Omega_0})$.  We have thus shown that the equality~\eqref{normal_n1} holds.
	
 Now, we are in a position to prove the first assertion of the theorem. Suppose that $\bar x\in {\mathcal S}$, where ${\mathcal S}$ is the solution set of the optimization problem given by~\eqref{optim-1} and~\eqref{D_intersection}. Since the problem satisfies the generalized Slater condition in the sense of Definition~\ref{Slater}, where $x^0\in{\rm ri}(\dom f)$, the regularity condition~\eqref{reg1} is fulfilled (see the proof of Theorem~\ref{Lagrange_associated}). Hence, by Lemma~\ref{Sols} we have $\bar x\in {\mathcal S}_1$, where ${\mathcal S}_1$ is the solution set of the associated convex optimization problem~\eqref{optim-2}. So, applying Theorem~\ref{Lagrange_associated}, we can find such Lagrange multipliers $\lambda_1\geq 0,\ldots,\lambda_m\geq 0$ that the inclusion~\eqref{incl_bar} holds and $\lambda_i\bar g_i(\bar x)=0$ for $i\in I$. As $\bar g_i(\bar x)=g_i(\bar x)$ for all $i\in I$, the latter implies that $\lambda_ig_i(\bar x)=0$ for $i\in I$. Thanks to the relations~\eqref{sudiff_n1}--\eqref{normal_n1}, we easily obtain~\eqref{incl_original} from~\eqref{incl_bar}.
 
To prove the second assertion of the theorem, suppose that $\bar x\in {\rm ri}(\dom f)$ and there exist Lagrange multipliers $\lambda_1\geq 0,\ldots,\lambda_m\geq 0$ satisfying~\eqref{incl_original} and the condition $\lambda_ig_i(\bar x)=0$ for $i\in I$. Then, by the assumptions made and by~\eqref{sudiff_n1}--\eqref{normal_n1} we can get the inclusion~\eqref{incl_bar} and also the equality $\lambda_i\bar g_i(\bar x)=0$ for every $i\in I$. Hence, by Theorem~\ref{Lagrange_associated} we can infer that $\bar x\in {\mathcal S}_1$. Furthermore, Lemma~\ref{eq_in_ridom} assures that ${\rm ri} (\dom \bar f)={\rm ri}(\dom f)$. So, we have $\bar x\in \mathcal{S}_1\cap D \cap {\rm ri}(\dom \bar f)$. It remains to apply Corollary~\ref{Sols_1} to have $\bar x\in {\mathcal S}$. 
  
	The proof is complete.	$\hfill\Box$
\end{proof}
 
 \begin{remark}\label{rem4.1} As shown in the proof of Theorem~\ref{Lagrange_NC}, the inclusion~\eqref {incl_original} combined with the condition $\lambda_i g_i(\bar x)=0$ for $i\in I$ is equivalent to~\eqref{incl_bar} in combination with the requirement $\lambda_i\bar g_i(\bar x)=0$ for $i\in I$, provided that all assumptions of the theorem are satisfied.
\end{remark}

\begin{example}
	Consider the optimization problem given by~\eqref{optim-1} and~\eqref{D_intersection}, where
	$f(x)=x_1^2-x_1$ for $x=(x_1,x_2)\in\R^2,$
	$$\Omega_0=[0,1]\times[0,1]
	\setminus\left(\{0\}\times\Big(\dfrac{1}{4},\dfrac{2}{3}\Big)\right)
	\setminus\left(\Big(\dfrac{1}{4},\dfrac{2}{3}\Big)\times\{0\}\right),$$
	and $\Omega_1=\big\{x=(x_1,x_2)\in\R^2\mid g(x)\leq 0\big\},$
	where
	$g:\R^2\to\R$ is defined by
	$$g(x)=x_1^2-3x_1+x_2\quad\mbox{\rm for}\ x=(x_1,x_2)\in\R^2.$$
	To solve this problem by using Theorems~\ref{Lagrange_associated} and~\ref{Lagrange_NC}, we first observe that
	$\Omega_0$ is a nearly convex set with $\overline{\Omega_0}=[0,1]\times[0,1]$, $f$ and
	$g$ are continuous convex functions. Consequently, \begin{equation*}\label{normal_cones} N(x;\overline{\Omega_0})=\begin{cases}
			\R_-\times \R_-\quad&\text{if } x=(0,0),\\
			\R_-\times \{0\}&\text{if } x\in \{0\}\times(0,1),\\
			\R_- \times \R_+&\text{if } x=(0,1),\\
			\{0\}\times \R_+ &\text{if } x\in(0,1)\times\{1\},\\
			\R_+ \times \R_+ &\text{if } x=(1,1),\\
			\R_+ \times \{0\}&\text{if } x\in\{1\}\times(0,1),\\
			\R_+\times \R_-&\text{if } x=(1,0),\\
			\{0\}\times \R_-&\text{if } x\in (0,1)\times\{0\},\\
			\{(0,0)\}&\text{if } x\in (0,1)\times(0,1),
	\end{cases}\end{equation*}
	$\bar f=f$ and $\bar g=g$. In addition, since $\dom f=\dom g=\R^2$, we have $$\ri(\dom f)={\rm int}(\dom f)=\R^2.$$ Moreover, for any point $x^0\in\ri\Omega_0={\rm int}\,\Omega_0\subset \ri(\dom f)$, the generalized Slater condition as stated in Definition~\ref{Slater} is satisfied. Therefore, by Theorem~\ref{Lagrange_associated}, a point $\bar x=(\bar x_1,\bar x_2)$ from $\overline{\Omega_0}$ is a solution
	of the associated convex optimization~\eqref{optim-2} if and only if there exists
	a Lagrange multiplier $\lambda\ge0$ such that~\eqref{incl_bar}
	holds and the complementarity slackness condition
	$\lambda(\bar x_1^2-3\bar x_1+\bar x_2)=0$
	is fulfilled. Since
	$$\partial f(\bar x)=\{\nabla f(\bar x)\}= \{(2\bar x_1-1,0)\},\ \; 
	\partial g(\bar x)=\{\nabla g(\bar x)\}= \{(2\bar x_1-3,1)\},$$ the inclusion~\eqref{incl_bar} can be rewritten as 
	\begin{equation}\label{KKT_sys}
		\big(2(-\lambda-1)\bar x_1+(3\lambda+1),-\lambda\big)\in N(\bar x;\overline{\Omega_0}).
	\end{equation} 
	
 If $\lambda=0$, then by~\eqref{KKT_sys} we have \begin{equation}\label{KKT_c1}\big(-2\bar x_1+1,0\big)\in N(\bar x;\overline{\Omega_0}).\end{equation} 
	
	If $\big(-2\bar x_1+1,0\big)=(0,0)$, that is $\bar x_1=\dfrac{1}{2}$, then~\eqref{KKT_c1} is fulfilled. Besides, since every point $\bar x\in \Big\{\dfrac{1}{2}\Big\}\times [0,1]$ satisfies the constraint $g(\bar x)\leq 0$, we have $\Big\{\dfrac{1}{2}\Big\}\times [0,1]\subset {\mathcal S}_1$. In particular, for $\bar x=\Big(\dfrac{1}{2},0\Big)$, one has $\bar f(\bar x)=f(\bar x)=\bar x_1^2-\bar x_1=-\dfrac{1}{4}$. Thus, the optimal value of~\eqref{optim-2} is $\bar v=-\dfrac{1}{4}$. Combining this with~\eqref{cl_omega_1} and~\eqref{barD_cap} gives
	$$\begin{array}{rcl}
	{\mathcal S}_1 & = &\left\{\bar x\in \bar D\mid \bar f(\bar x)=\bar v\right\}\\
	  & = &\big\{\bar x\in \overline{\Omega_0}\cap\overline{\Omega_1}\mid \bar f(\bar x)=\bar v\big\}\\
	   & = &\big\{\bar x\in \overline{\Omega_0}\mid \bar g(\bar x)\leq 0,\ \bar f(\bar x)=\bar v\big\}\\
	    & = &\Big\{\bar x\in [0,1]\times [0,1]\mid g(\bar x)\leq 0,\ f(\bar x)=-\dfrac{1}{4}\Big\} \\
	    & = &\Big\{\bar x\in [0,1]\times [0,1]\mid g(\bar x)\leq 0,\ \bar x_1=\dfrac{1}{2}\Big\}.
	\end{array}$$ It follows that ${\mathcal S}_1=\Big\{\dfrac{1}{2}\Big\}\times [0,1].$ Using this result, Theorem~\ref{Lagrange_NC}, and Remark~\ref{rem4.1}, we find that ${\mathcal S}=\Big\{\dfrac{1}{2}\Big\}\times (0,1].$ 
\end{example}

 \section{Concluding Remarks}\label{sect5} 
 
Via the concept of associated convex optimization problem, we have obtained various properties of nearly convex optimization problems under geometrical constraints and functional constraints. Optimality conditions in the forms of Fermat's rules and Lagrange multiplier rules have been established. Several illustrative examples have been constructed.
 
 \section*{Acknowledgements}
 
 This research was supported by the project NCXS02.01/24-25 of Vietnam Academy of Science and Technology.  
 
 \section*{Declarations}
 
 \textbf{Conflict of interest} The authors have not disclosed any conflict of interest.
 
 \smallskip
 \noindent \textbf{Data availability statement} This manuscript has no associated data.


\begin{thebibliography}{99}
 	
  	\bibitem{at_2003} Auslender, A.,  Teboulle, M.: Asymptotic Cones and Functions in Optimization and Variational Inequalities, Springer-Verlag, New York (2003)
 	
 	\bibitem{nr1}  Bauschke, H.H.,  Hare, W.L.,  Moursi, W.M.:  On the range of the Douglas-Rachford operator, Math. Oper. Res. \textbf{41}, 884--897 (2016)
 	
 	\bibitem{bmw2013}  Bauschke, H.H.,  Moffat, S.M., Wang, X.: Near equality, near convexity, sums of maximally monotone operators, and averages of firmly nonexpansive mappings, Math. Program. 139, Ser. B, 55--70 (2013)
 	
 	 \bibitem{nr2}   Bauschke, H.H.,   Moursi, W.M.:  On the behavior of the Douglas-Rachford algorithm for minimizing a convex function subject to a linear constraint, SIAM J. Optim.  30, 2559--2576 (2020)
 	 	
  	\bibitem{bgw2007} 	Bo{\c{t}}, R.I., Grad, S.M., Wanka, G.: Almost convex functions: conjugacy and duality. In: Konnov, I., Luc, D.T., Rubinov, A. (eds.) Generalized Convexity and Related Topics, Lecture Notes in Economics 583, pp. 101--114. Springer-Verlag, Berlin Heidelberg (2007)
 	
 	\bibitem{bkw2008}  Bo{\c{t}}, R.I.,  Kassay, G., Wanka, G.: Duality for almost convex optimization problems via the perturbation approach, J. Global Optim. 42, 385--399 (2008)
 	
 	\bibitem{gm2021}  Ghafari, N., Mohebi, H.: Optimality conditions for nonconvex problems over nearly convex feasible sets, Arab. J. Math. 10, 395--408 (2021)
 	
 	\bibitem{nr3}  Hinrichsen, D., Oeljeklaus, E.: The set of controllable multi-input systems is generically convex, Math. Control. Signal Syst. 31 , 265--278 (2019)
 	
 	\bibitem{ho1}  Ho, Q.: Necessary and sufficient KKT optimality conditions in non-convex optimization, Optim. Lett. 11, 41--46 (2017)
 	
 	\bibitem{IT_1979} Ioffe, A.D.; Tihomirov, V.M.: Theory of Extremal Problems,
 	North-Holland Publishing Co., Amsterdam-New York (1979)
 	
 	\bibitem{jm1}  Jeyakumar, V.,  Mohebi, H.: Characterizing best approximation from a convex set without convex representation,
 	J. Approx. Theory 239, 113--127 (2019)
 	
 	\bibitem{LM2019}   Li, J., Mastroeni, G.: Near equality and almost convexity of functions with applications to optimization and error bounds, J. Convex Anal.  26, 785--822 (2019)
 	
 	\bibitem{nr5}  Luo, H., Wang, X., Lukens, B.: Variational analysis on the signed distance functions, J. Optim. Theory Appl. 180, 751--774 (2019)
 	
 	\bibitem{Minty1961}  Minty, G.J.: On the maximal domain of a ``monotone'' function,  Michigan Math. J. 8, 135--137 (1961)
 	
 	\bibitem{mmw2016}  Moffat, S.M.,  Moursi, W.M.,  Wang, X.:
 	Nearly convex sets: fine properties and domains or ranges of subdifferentials of convex functions,
 	Math. Program. 160, Ser. A, 193--223 (2016)
 	
 	
 	\bibitem{nr4}  Moursi, W.M.: The forward-backward algorithm and the normal problem,  J.
 		Optim. Theory Appl. 176, 605--624 (2018)
 	
 	\bibitem{nty1}   Nam, N.M.,  Thieu, N.N.,  Yen, N.D.: Near convexity and generalized differentiation, J. Convex Anal. 32, 605--630 (2025)
 	
 	\bibitem{r}  Rockafellar, R.T.: Convex Analysis, Princeton University Press, Princeton, New Jersey (1970)
 	
 	\bibitem{R1970} Rockafellar, R.T.: On the virtual convexity of the domain and range of a nonlinear maximal monotone operator, Math. Ann. 185, 81--90 (1970)
 		
 \end{thebibliography}
\end{document}